\let\cal\mathscr
\newcommand \Om {\Omega}
\newcommand \om {\omega}
\newcommand \0 {\emptyset}
\renewcommand \leq {\leqslant}
\renewcommand \geq {\geqslant}
\newcommand{\norm}[1]{\left\Vert #1\right\Vert}
\DeclareMathOperator{\Vol}{Vol}
\DeclareMathOperator{\End}{End}
\DeclareMathOperator{\Tr}{Tr}
\DeclareMathOperator{\Ker}{Ker}
\DeclareMathOperator{\GL}{GL}
\DeclareMathOperator{\Spec}{Spec}
\DeclareMathOperator{\hol}{hol}
\DeclareMathOperator{\Lie}{Lie}
\newcommand \dbar {\overline{\partial}}
\newcommand \< {\mathcal{h}}
\renewcommand \> {\mathcal{i}}
\newcommand \cinf {\CC^\infty}
\newcommand \Id {{\rm Id}}
\renewcommand \epsilon {\varepsilon}
\newcommand \CC {{\cal C}}
\newcommand \EE {{\cal E}}
\newcommand \FF {{\cal F}}
\newcommand \HH {{\cal H}}
\newcommand \PP {{\cal P}}
\newcommand \UU {{\cal U}}
\newcommand \Q {{\mathcal Q}}
\newcommand \QQ[1] {\Q_{#1,x_0}}
\newcommand \dt {\frac{\partial}{\partial t}}
\newcommand{\til}[1]{\widetilde{#1}}
\newcommand \R {\mathbb R}
\newcommand \C {\mathbb C}
\newcommand \IH {\mathbb H}
\newcommand \N {\mathbb N}
\newcommand \Z {\mathbb Z}
\newcommand \IT {\mathbb T}
\newcommand \fl {\rightarrow}
\newcommand \ignore[1] {}
\newcommand{\overbar}[1]{\mkern 1.5mu\overline{\mkern-1.5mu#1\mkern-1.5mu}\mkern 1.5mu}
\theoremstyle{plain}
\newtheorem{theorem}{Theorem}[section]
\newtheorem{lem}[theorem]{Lemma}
\newtheorem{cor}[theorem]{Corollary}
\newtheorem{prop}[theorem]{Proposition}
\theoremstyle{definition}
\newtheorem*{ackn*}{Acknowledgements}
\newtheorem{defi}[theorem]{Definition}
\newtheorem{rem}[theorem]{Remark}
\numberwithin{equation}{section}
\crefname{equation}{}{}
\crefname{lem}{Lemma}{Lemmas}
\crefname{theorem}{Theorem}{Theorems}
\crefname{cor}{Corollary}{Corollaries}
\crefname{ex}{Example}{Examples}
\crefname{defi}{Definition}{Definitions}
\crefname{prop}{Proposition}{Propositions}
\crefname{section}{Section}{Sections}
\crefname{subsection}{Section}{Sections}
\crefname{rem}{Remark}{Remarks}
\begin{document}

\title{\bf{Quantization and isotropic submanifolds}}
\author{Louis IOOS$^1$}
\date{}

\maketitle

\footnotetext[1]{Supported by the grant DIM-RDF from Région
Ile-de-France}

\begin{abstract}

We introduce the notion of an isotropic quantum state associated
with a Bohr-Sommerfeld manifold in the context of Berezin-Toeplitz
quantization of general prequantized symplectic manifolds, and we
study its semi-classical properties using the off-diagonal expansion
of the Bergman kernel. We then show how these results extend to the
case of non-compact orbifolds, and give an
application to relative Poincaré series in the theory of automorphic
forms.

\end{abstract}

\section{Introduction}

Let $(X,\om)$ be a compact symplectic manifold of dimension $2n$, and let $(L,h^L)$ be a Hermitian line bundle over $X$, endowed with a Hermitian connection $\nabla^L$ such that its curvature $R^L$ satisfies the following \emph{prequantization condition},
\begin{equation}\label{preq}
\om=\frac{\sqrt{-1}}{2\pi}R^L.
\end{equation}
Let $J$ be an almost complex structure on $TX$ compatible with $\om$, and let $g^{TX}$ be the Riemannian metric on $TX$ induced by $\om$ and $J$. For any $p\in\N^*$, we denote by $L^p$ the $p$-th tensor power of $L$, we write $\Delta^{L^p}$ for the associated Bochner Laplacian
acting on $\cinf(X,L^p)$, and consider the \emph{renormalized Bochner
Laplacian}, given for any $p\in\N^*$ by the formula
\begin{equation}\label{deltintro}
\Delta^{L^p}-2\pi np\,.
\end{equation}
Following \cite[(1.7)]{GU88}, it admits a discrete spectrum in $\R$,
and there exist constants
$\til{C},\,C,\,\mu>0$ such that for all $p\in\N^*$,
it has a finite number of eigenvalues contained in the interval
$[-\til{C},\til{C}]$, while all the others are bigger than
$\mu\,p-C$. Then for all $p\in\N^*$, we define the finite dimensional
space $\HH_p\subset\cinf(X,L^p)$ of \emph{almost holomorphic sections}
of $L^p$ as the direct sum of the eigenspaces associated with the
eigenvalues of the renormalized Bochner Laplacian
inside $[-\til{C},\til{C}]$. As explained in
\cref{subsetting}, these spaces satisfy the Riemann-Roch-Hirzebruch
formula for $p\in\N^*$ big enough, and are a natural
generalization of the spaces of holomorphic sections of $L^p$.

In fact, consider the special case of $J$ being integrable, making $(X,J,\om)$ into a \emph{Kähler manifold}, together with a holomorphic Hermitian line bundle $(L,h^L)$ such that its \emph{Chern connection} $\nabla^L$ (its unique Hermitian connection compatible with the holomorphic structure) satisfies \cref{preq}. For any $p\in\N^*$, writing $\dbar_p$ for the holomorphic $\dbar$-operator on forms with values in $L^p$ and $\dbar_p^*$ for its formal adjoint with respect to the $L^2$-Hermitian product, the \emph{Bochner-Kodaira} formula tells us that the operator \cref{deltintro} is equal to $2\dbar_p^*\dbar_p$. Then by a result of \cite[Th.~1.1]{BV89}, this operator shows a \emph{spectral gap}, so that the eigenvalues inside
$[-\til{C},\til{C}]$ are all equal to $0$ for $p\in\N^*$ big enough. The space $\HH_p$ of almost holomorphic sections considered above then reduces to the space of \emph{holomorphic sections} of $L^p$.
As explained for instance in \cite[\S\,9.2]{Woo92}, these
spaces can be thought as the spaces of quantum states of the
\emph{holomorphic quantization} of the symplectic manifold $(X,\om)$,
seen as a dynamical phase space of classical mechanics.
In this context, the integer $p\in\N^*$ represents a \emph{quantum
number}, usually inversely proportional to the \emph{Planck constant},
and asymptotic results when $p$ tends to infinity
describe the so-called \emph{semi-classical limit},
when the scale gets so large that we recover the laws of classical
mechanics as an approximation of the laws of quantum mechanics.
%
%

On the other hand, in the framework of geometric quantization associated with a regular Lagrangian fibration on $X$, the quantum states of $X$ are represented by immersed Lagrangian submanifolds $\iota:\Lambda\hookrightarrow X$ satisfying a property called the \emph{Bohr-Sommerfeld condition}, which asks for the existence of a non-vanishing section $\zeta\in\cinf(\Lambda,\iota^*L)$ parallel with respect to $\nabla^{\iota^*L}$ and satisfying
$|\zeta(x)|_{\iota^*L}=1$ for all $x\in\Lambda$
(see for example \cite{Sni75}). We call the data of $(\Lambda,\iota,\zeta)$ a \emph{Bohr-Sommerfeld Lagrangian}. The existence of a regular Lagrangian fibration on $X$ being too restrictive, we consider in general singular Lagrangian fibrations, in which we allow the dimension of the fibres to drop on a finite union of submanifolds of positive codimension in $X$. Removing the condition $\dim\Lambda=n$, we call the data of $(\Lambda,\iota,\zeta)$ a \emph{Bohr-Sommerfeld submanifold}. The typical case of a singular Lagrangian fibration is the case of \emph{toric manifolds}, where $X$ is endowed with an effective Hamiltonian action of $\IT^n=(S^1)^n$ and the fibres are given by the orbits of this action. For a comparison between holomorphic and real quantization in this context, see for example \cite{BFMN11}.

In this paper, we use the theory of the generalized Bergman kernel of Ma and Marinescu in \cite{MM08a} to study semi-classical properties of Bohr-Sommerfeld submanifolds in the context of the \emph{almost holomorphic quantization} described above. Here, the quantization of a Bohr-Sommerfeld submanifold is represented by a sequence $\{s_p\in\HH_p\}_{p\in\N^*}$, called an \emph{isotropic state}, defined for any $p\in\N^*$ by the formula
\begin{equation}\label{LSsimple}
s_{p}=\int_\Lambda P_p(x,\iota(y))\zeta^p(y)dv_\Lambda(y),
\end{equation}
where $dv_\Lambda$ is the Riemannian volume form of $(\Lambda,\iota^*g^{TX})$, $\zeta^p\in\cinf(\Lambda,\iota^*L^p)$ is the $p$-th tensor power of $\zeta$ and $P_p(\cdot,\cdot)$ is the \emph{generalized Bergman kernel}, that is the Schwartz kernel with respect to $dv_X$ of the orthogonal projection $P_p$ from $\cinf(X,L^p)$ to $\HH_p$ with respect to the natural $L^2$-Hermitian product. The expected behaviour of a quantum state in the semi-classical limit is to rapidly localize around the corresponding classical object, and we show in \cref{uopinf} that isotropic states indeed concentrate around the associated Bohr-Sommerfeld submanifold when $p\fl+\infty$. Furthermore, we establish in \cref{theonorme} the following estimate on the $L^2$-norm $\|\cdot\|_p$ of these sections
as $p\fl+\infty$, which is the first main result of this paper and which we state here in its simplest form.

\begin{theorem}\label{introtheonorme}
Let $(\Lambda,\zeta,\iota)$ be a Bohr-Sommerfeld submanifold of $X$. 
Then there exist $a_r\in\R,~r\in\N$, such that for any $k\in\N$ and as $p\fl+\infty$,
\begin{equation}
\norm{s_{p}}^2_p=p^{n-\frac{\dim\Lambda}{2}}\sum_{r=0}^k p^{-r}
a_r+O(p^{n-\frac{\dim\Lambda}{2}-(k+1)}).
\end{equation}
Furthermore, we have $a_0=2^{\frac{\dim\Lambda}{2}}\Vol(\Lambda)$, where
$\Vol(\Lambda)>0$ is the Riemannian volume of $(\Lambda,\iota^*g^{TX})$.
\end{theorem}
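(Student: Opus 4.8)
The plan is to reduce the squared norm to a double integral over $\Lambda\times\Lambda$ and then feed in the near-diagonal expansion of the generalized Bergman kernel. Since $P_p$ is the orthogonal projector onto $\HH_p$, its Schwartz kernel is self-adjoint, $P_p(x,y)^*=P_p(y,x)$, and idempotent, $\int_X P_p(x,z)P_p(z,y)\,dv_X(z)=P_p(x,y)$. Substituting \cref{LSsimple} into $\norm{s_p}_p^2=\int_X\langle s_p(x),s_p(x)\rangle\,dv_X(x)$ and using these two identities, one gets
\begin{equation}
\norm{s_p}_p^2=\int_\Lambda\int_\Lambda\big\langle P_p(\iota(y'),\iota(y))\zeta^p(y),\zeta^p(y')\big\rangle\,dv_\Lambda(y)\,dv_\Lambda(y'),
\end{equation}
where $dv_\Lambda$ is the Riemannian volume of $(\Lambda,\iota^*g^{TX})$. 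The off-diagonal decay of the Bergman kernel (Ma--Marinescu) gives $|P_p(x,y)|=O(p^{-\infty})$ uniformly for $d(x,y)\ge\epsilon$, so the double integral localizes, modulo $O(p^{-\infty})$, to an arbitrarily small neighbourhood of the diagonal of $\Lambda$.

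Next, fix $y_0\in\Lambda$ with image $x_0=\iota(y_0)$ and work simultaneously in geodesic normal coordinates on $\Lambda$ centred at $y_0$, in normal coordinates on $X$ centred at $x_0$, and in the trivialization of $L$ near $x_0$ obtained by parallel transport along radial geodesics, in which $h^L\equiv1$ and the connection form vanishes at $x_0$. Writing $y\leftrightarrow V$, $y'\leftrightarrow U$ in $\R^d\cong T_{y_0}\Lambda$ and their images in $X$ as $Z(V),Z(U)$ — which lie in the isotropic subspace $d\iota(T_{y_0}\Lambda)$ up to a second-order correction governed by the second fundamental form of $\iota$ — the near-diagonal expansion of $P_p$ expresses the integrand, in the trivialization, as $p^n$ times the model Bergman kernel of $(\C^n,J_{x_0})$ evaluated at $\big(\sqrt p\,Z(U),\sqrt p\,Z(V)\big)$, times the transition factor $\til\zeta^p(V)\,\overline{\til\zeta^p(U)}$, plus terms of lower order in $p^{1/2}$. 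The crucial simplifications are: (i) on the isotropic subspace $\om_{x_0}$ vanishes, so the Hermitian form $g^{TX}_{x_0}+\sqrt{-1}\,\om_{x_0}$ is real there and the model kernel restricted to $\Lambda\times\Lambda$ collapses to the real Gaussian $\exp\big(-\tfrac{\pi p}{2}|Z(U)-Z(V)|^2\big)$; and (ii) the Bohr--Sommerfeld condition makes $\iota^*\nabla^L$ flat, hence, combined with $R^L(Z,Z)=0$ in the radial gauge, forces $\til\zeta^p(V)=\big(1+O(p|V|^3)\big)\til\zeta^p(0)$, so the phase of $\zeta^p$ is trivial at leading order (here $|\zeta|_{h^L}\equiv1$ after normalizing the parallel section).

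Finally, rescale by fixing $y$ and writing $y'=\exp^\Lambda_y(W/\sqrt p)$ with $W\in T_y\Lambda\cong\R^d$, so that $|Z(U)-Z(V)|^2=|W|^2/p+O(p^{-3/2})$ and $dv_\Lambda(y')=p^{-d/2}\,dW\,(1+O(p^{-1/2}))$. The leading term becomes $p^{n-d/2}\int_\Lambda\!\int_{\R^d}e^{-\pi|W|^2/2}\,dW\,dv_\Lambda(y)=p^{n-d/2}\,2^{d/2}\Vol(\Lambda)$, since $\int_{\R^d}e^{-\pi|W|^2/2}\,dW=2^{d/2}$. Inserting the full Ma--Marinescu expansion of $P_p$ together with the Taylor expansions of the coordinate changes and of $\til\zeta^p$, one obtains a full asymptotic expansion in powers of $p^{1/2}$; a parity argument (the leading profile is even in $W$, while the contributions of half-integral order are odd in $W$ and integrate to zero over $\R^d$) removes the half-integral powers and yields the stated expansion with $b_0=2^{d/2}\Vol(\Lambda)$.

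The main obstacle is the bookkeeping involving the three coordinate systems: one must reconcile normal coordinates on $\Lambda$, normal coordinates on $X$ pulled back by $\iota$, and the radial gauge on $L$, keeping track of the second fundamental form of $\iota$ and of the discrepancy between $\iota^*\nabla^L$-parallel transport and radial parallel transport of $\zeta^p$, and then verify that neither affects $b_0$. This is precisely where the isotropy of $\Lambda$ is indispensable — it is what makes the leading Gaussian real and the leading phase trivial — and where the cancellation of the half-integral powers of $p^{-1}$ must be justified.
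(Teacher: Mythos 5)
Your proposal is correct and follows the same global strategy as the paper: the projector identity reduces $\norm{s_p}_p^2$ to the Bergman kernel integrated over $\Lambda\times\Lambda$ (the paper phrases this via the reproducing property \cref{rep}), the off-diagonal decay \cref{theta} localizes the integral, the Ma--Marinescu near-diagonal expansion is inserted, and after rescaling the leading term is the Gaussian $\int_{\R^d}e^{-\pi|W|^2/2}\,dW=2^{d/2}$ over the isotropic directions, with a parity argument killing the half-integer powers. The one substantive difference is the local setup. You work in geodesic normal coordinates on $X$ and on $\Lambda$ together with the radial gauge, and you correctly flag the resulting bookkeeping (second fundamental form, the $O(p|V|^3)$ phase discrepancy of $\zeta^p$, the cubic error in $\Omega(Z(U),Z(V))$) as the main obstacle; these corrections are indeed $O(p^{-1/2})$ after rescaling and of the right parity, so your route can be completed, but it is the laborious one. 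The paper engineers this obstacle away: the expansion of \cref{asy} is stated in the charts \cref{defphi}, which are chosen so that a local piece of $\Lambda$ is mapped onto a \emph{linear} isotropic subspace $\Sigma\subset\R^{2n}$; then radial parallel transport along $\Sigma$ coincides with $\nabla^{\iota^*L}$-parallel transport, so $\zeta$ is identified exactly with the constant $1$, and $\Omega$ vanishes identically on $\Sigma\times\Sigma$, leaving only Taylor expansions of the volume densities and of the polynomials $J_{r}$. Two minor cautions if you carry out your version: since $\iota$ is only an immersion, the localization is to preimages of small balls in $X$ (possibly several sheets, as in \cref{loclagm}), not literally to the diagonal of $\Lambda$; and if you invoke the near-diagonal expansion in geodesic normal coordinates you must use the version of the model kernel adapted to that chart (harmless here since $g^{TX}=g^{TX}_\om$ in the setting of this statement).
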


The proof of this Theorem uses the off-diagonal expansion expansion
as $p\fl +\infty$ of the generalized Bergman kernel given in
\cite[Th.\,1.19]{MM08a}, which
actually implies an analogous expansion
for the isotropic state $s_{p}$ around $\Lambda$ depending on the
position of the tangent spaces of $\Lambda$ with respect to
the Riemannian metric $g^{TX}$, similar to the asymptotic expansion
of the $G$-invariant Bergman kernel of Ma and Zhang in
\cite[Th.~0.2]{MZ08}. Although we do not state it explicitly,
this fact is implicitly used in
\cref{Lagint}, where we study the $L^2$-Hermitian product
$\<\cdot,\cdot\>_p$ of two such sections as $p\fl+\infty$. We show that this product tends rapidly to $0$ whenever the two associated submanifolds do not intersect, and we establish \cref{theointergal}, which is the second main result of this paper and which we state here in its simplest form, using the notion of \emph{clean intersection}
of \cref{cleandef}.

\begin{theorem}\label{introtheointergal}
Let $(\Lambda_1,\iota_1,\zeta_1)$ and $(\Lambda_2,\iota_2,\zeta_2)$ be two Bohr-Sommerfeld submanifolds with clean and connected intersection, and let $\{s_{j,p}\}_{p\in\N^*},j=1,2$, denote the associated isotropic states. Set $l=\dim \Lambda_1\cap\Lambda_2$ and $d_j=\dim\Lambda_j,~j=1,2$. Then there exist $b_{r}\in\C,~r\in\N$, such that for any $k\in\N$ and as $p\fl+\infty$,
\begin{equation}\label{introexp}
\<s_{1,p},s_{2,p}\>_p=p^{n-\frac{d_1+d_2}{2}+\frac{l}{2}}\lambda^p \sum_{r=0}^{k} p^{-r} b_{r}+O(p^{n-\frac{d_1+d_2}{2}+\frac{l}{2}-(k+1)}),
\end{equation}
where $\lambda\in\C$ is the value of the constant function on $\Lambda_1\cap\Lambda_2$ defined for any $x\in \Lambda_1\cap\Lambda_2$ by $\lambda(x)=\<\zeta_1(x),\zeta_2(x)\>_L$. Furthermore, if $\dim\Lambda_1=n$, the following formula holds,
\begin{equation}\label{introb0}
b_{0}=2^{\frac{n}{2}}\int_{\Lambda_1\cap\Lambda_2}
\det{}^{-\frac{1}{2}} \Big\{\sqrt{-1}\sum_{k=1}^{n-l}
h^{TX}(e_k,\nu_i)\om(e_k,\nu_j)\Big\}_{i,j=1}^{d_2-l}
|dv|_{\Lambda_1\cap\Lambda_2},
\end{equation}
where $\<e_i\>_{i=1}^{n-l},\<\nu_j\>_{j=1}^{d_2-l}$ are local orthonormal frames of the normal bundle of $\Lambda_1\cap\Lambda_2$ in $\Lambda_1,\Lambda_2$ respectively, and $|dv|_{\Lambda_1\cap\Lambda_2}$ is the Riemannian density on $\Lambda_1\cap\Lambda_2$ induced by $g^{TX}$.
\end{theorem}

The proof of this Theorem also gives a formula
for the first coefficient \cref{introb0} in the case $\Lambda_1$
and $\Lambda_2$ are both not Lagrangian, but
its geometric meaning is unclear, which is why we did not give it
explicitly. Note on the other hand
that although the integrand of \cref{introb0} is
nowhere vanishing, nothing prevents the whole integral to vanish in
general. In any case, this shows
that in the semi-classical limit, the Hermitian product of two isotropic states is closely related to the geometry of the intersection of the corresponding submanifolds. The left hand side of \cref{introexp} is called the \emph{intersection product} of $s_{1,p}$ and $s_{2,p}$, and can be thought as the cup product of some Lagrangian intersection theory (see \cite{Tyu00} for a discussion on this idea).

To give the most general formulation of \cref{introtheonorme,introtheointergal}, we use the theory of Berezin-Toeplitz operators for the generalized Bergman kernel on symplectic manifolds of \cite{ILMM17}, we consider any $J$-invariant Riemannian metric $g^{TX}$ on $TX$ and isotropic states taking values in an auxiliary Hermitian vector bundle $(E,h^E)$ with Hermitian connection $\nabla^E$. In the case of non-connected intersection, the expansion \cref{introexp} takes the form of a sum over the connected components. This is described in \cref{theonorme,theointergal},
in the case $X$ smooth and compact. 
When $(E,h^E)$ is the so-called \emph{metaplectic correction}, we 
recover the setting of metaplectic quantization and Lagrangian submanifolds endowed with half-forms, as explained in \cref{meta}.
On the other hand, the case of higher dimensional $(E,h^E)$
is relevant for the applications to relative Poincaré series
in the theory of vector-valued automorphic forms, as
explained below. In the same context, note that the metric $g^{TX}$
used in \cref{appli} is not the metric induced by $\om$ and $J$,
although the difference is rather trivial as noted in
the proof of \cref{thBScurve}. However, the case of a general
Hermitian metric $g^{TX}$ might be useful in the
study of relative Poincaré series on more general symmetric spaces.

In \cref{secnoncpctorbi}, we explain how the results of \cref{seclagstate} extend to the case of $(X,g^{TX})$ complete non-compact orbifold, when the immersed isotropic submanifold $\Lambda$ is compact and $(X,J,\om,g^{TX})$ is Kähler. As an application to the case where $X$ is the quotient of the Poincaré upper-half plane $\IH$ by a discrete subgroup $\Gamma$ of $\text{SL}_2(\R)$, we derive in \cref{appli} asymptotic results on relative Poincaré series in the theory of automorphic forms.

In the case $(X,J,\om,g^{TX})$ compact Kähler manifold with $c_1(TX)$ even, $E=\C$ and $\dim\Lambda_1=\dim\Lambda_2=n$, \cref{theointergal} is the main result of Borthwick, Paul and Uribe in \cite[Th.~3.2]{BPU95}, with an expansion in half-integer powers of $p$ in
\cite[(85)]{BPU95} instead of integer powers as in \cref{introexp}. This is explained in \cref{meta}, where we translate their use of the formalism of half-forms by taking for $E$ a square root of the canonical bundle of $X$. In the case where $\Gamma$ acts freely on $\IH$ and where $X=\IH/\Gamma$ is compact, the application to relative Poincaré series in \cref{appli} is the result of \cite[\S~4]{BPU95}. In the case where $(X,J,\om,g^{TX})$ is additionally equipped with an Hamiltonian action of a compact Lie group lifting to $(L,h^L,\nabla^L)$, an equivariant version of the results of \cite{BPU95} has been obtained by Debernardi and Paoletti \cite{DP06}. Semi-classical asymptotics on Lagrangian states have also been obtained by Charles in \cite{Cha03} in the case of discrete intersections and in the same particular context than in \cite{BPU95}.

The theory of Berezin-Toeplitz operators was first developed by Bordemann, Meinreken and Schlichenmaier in \citep{BMS94} and Schlichenmaier in \citep{Sch00} for the Kähler case, $E=\C$ and $g^{TX}(\cdot,\cdot)=\om(\cdot,J\cdot)$. The approach of both \cite{BMS94}, \cite{BPU95}, \cite{Cha03} and \cite{DP06} is based on the work of Boutet de Monvel and Sjöstrand on the Szegö kernel in \citep{BdMS75}, and the theory of Toeplitz structures developed by Boutet de Monvel and Guillemin in \citep{BdMG81}. Note that the definitions of \cref{BSsec} extend in a straightforward way to the case of \emph{spin$^c$ quantization} considered for example in \cite{MM08b}, and the results of \cref{seclagstate} and \cref{Lagint} certainly hold in this case. If $(X,J,\om,g^{TX})$ is further endowed with an Hamiltonian action of a compact Lie group $G$ lifting to $(L,h^L,\nabla^L),(E,h^E,\nabla^E)$ such that $0\in\Lie(G)^*$ is a regular point of the associated moment map $\mu:X\fl\Lie(G)^*$, and if $\iota:\Lambda\fl X$ intersects $\mu^{-1}(0)$ cleanly in the sense of \cref{cleandef}, then one can use the full off-diagonal expansion of the $G$-invariant Bergman kernel of Ma and Zhang in \cite[Th.~0.2, Rem.~0.3]{MZ08} to prove a result analogous to \cref{theonorme} for the $G$-invariant part of the associated isotropic state.

About relative Poincaré series,
Barron (previously Foth) studied in \cite{F02} the case of
Bohr-Sommerfeld
tori in higher dimensional symmetric spaces. The results of
\cref{secnoncpctorbi} can then be used
to generalize \cite[\S~1.3]{F02} to the case of non-compact or
orbifold symmetric spaces. In another direction, the results of 
\cref{secasylagstate,secclean} can be applied to study
relative Poincaré series associated
with isotropic submanifolds in higher dimensional symmetric spaces.
The case of geodesics on some specific compact 
quotients of the ball has been studied by Barron
in \cite{B18}. On the other hand, Alluhaibi and Barron
studied in \cite{AB17} the case of relative Poincaré series
associated with some submanifolds of the ball
which are not necessarily isotropic. Note that
they consider more generally
the case of vector-valued automorphic
forms, which corresponds for us to the case of $(E,h^E)$
flat Hermitian vector bundle of arbitrary dimension.
Our results can thus also
be applied to this case, when the underlying submanifold is isotropic.

A final motivation for this work is towards the program initiated by Witten in \cite{Wit89} in holomorphic quantization of Chern-Simons theory, showing an asymptotic expansion for Lagrangian states associated with some special Bohr-Sommerfeld Lagrangians inside the moduli space of flat connections on a Riemann surface, defined in \cite[Prop.~7.2]{JW92} and \cite[Prop.~3.27]{Free95}. Bohr-Sommerfeld Lagrangians in this context have also been studied by Tyurin in \cite{Tyu00}, and in the more general context of the Abelian Lagrangian Algebraic Geometry program of Gorodentsev and Tyurin \cite{GT01}. In both cases, it is of particular importance to be able to consider orbifolds.

\begin{ackn*}
This paper is a part of the author's PhD thesis under the supervision
of Pr. Xiaonan Ma, and the author wants to thank him for his
constant 
support. The author also wants to thank the anonymous
referees for useful comments
and suggestions. Part of this work was done while the author
was visiting the Institute
for Mathematical Sciences of Singapore, NUS, in May 2017, and
he would like to thank them for their hospitality.
This work was supported by the grant DIM-RDF
from Région Ile-de-France.
\end{ackn*}

\section{Generalized Bergman kernels on Symplectic Manifolds}\label{setting}

In this section, we set the context and notations, and recall the results of \cite{MM02}, \cite{MM08a} and \cite{ILMM17} we will need throughout the paper. We refer to the book \cite[Chap.4-8]{MM07} as a basic reference for the theory.

\subsection{Setting}\label{subsetting}

Let $(X,\om)$ be a compact symplectic manifold of dimension $2n$ with tangent bundle $TX$, and let $(L,h^L)$ be a Hermitian line bundle
over $X$, together with a Hermitian connection $\nabla^L$ satisfying \cref{preq}. Let $J$ be an almost complex structure compatible with
$\om$, and take $g^{TX}$ to be any $J$-invariant Riemannian metric
on $TX$. We write $\nabla^{TX}$ for the associated
Levi-Civita connection and $d^X(\cdot,\cdot)$ for the Riemannian distance of $(X,g^{TX})$

For any Euclidean vector bundle $(\EE,g^\EE)$, we write $\EE_\C$ for its complexification and still write $g^\EE$ for the induced $\C$-bilinear product on $\EE_\C$. Let us write
\begin{equation}\label{splitc}
TX_\C=T^{(1,0)}X\oplus T^{(0,1)}X
\end{equation}
for the splitting of $TX_\C$ into the eigenspaces of $J$ corresponding to the eigenvalues $\sqrt{-1}$ and $-\sqrt{-1}$ respectively. Then for any $x\in X,~v,w\in T^{(1,0)}_xX$, we define the positive Hermitian endomorphism $\dot{R}^L_x\in\End(T^{(1,0)}_xX)$ by the formula
\begin{equation}\label{RL}
g^{TX}(\dot{R}^L_x v,\overline{w})=R^L(v,\overline{w}).
\end{equation}
We denote by $K_X=\det(T^{*(1,0)}X)$ the canonical line bundle of $(X,J)$, endowed with the Hermitian structure and connection $h^{K_X},~\nabla^{K_X}$ induced by $g^{TX},~\nabla^{TX}$ via \cref{splitc}. We will consider as well the Riemannian metric $g^{TX}_\om$ on $TX$ defined by the formula
\begin{equation}\label{Jgras}
g^{TX}_\om(\cdot,\cdot)=\om(\cdot,J\cdot),
\end{equation}
and the Hermitian metric $h^{TX}_\om$ on $(TX,J)$ defined by
\begin{equation}\label{HermTX}
h^{TX}_\om=g^{TX}_\om-\sqrt{-1}\om.
\end{equation}
Note that if $g^{TX}=g^{TX}_\om$, then $\dot{R}^L=2\pi\Id_{T^{(1,0)}X}$. For any submanifold $Y\subset X$, we will write $g^{TY},g^{TY}_\om$ for the Riemannian metrics on $Y$ induced by $g^{TX},g^{TX}_\om$ and $dv_Y,dv_{Y,\om}$ for the induced Riemannian volume forms. In particular, we have
\begin{equation}\label{dv=dvom}
dv_{X,\om}=\det\left(\dot{R}^L/2\pi\right)dv_X.
\end{equation}
For any Hermitian vector bundle $(E,h^E)$ over $X$, we write 
$\<\cdot,\cdot\>_E$ and $|\cdot|_E$ for the Hermitian product and norm
induced by $h^E$.

Let $(E,h^E)$ be an auxiliary Hermitian vector bundle over $X$ with Hermitian connection $\nabla^E$, and write $R^E$ for the curvature
of $\nabla^E$. For any $p\in\N^*$, we write 
\begin{equation}\label{Ep}
E_p=L^p\otimes E,
\end{equation}
endowed with the Hermitan metric $h^{E_p}$  and connection$\nabla^{E_p}$ induced by $h^{L},\,h^{E}$ and $\nabla^{L},\,\nabla^{E}$.

\begin{defi}\label{bochner}
The \emph{Bochner Laplacian} $\Delta^{E_p}$ is the second order differential operator acting on $\cinf(X,E_p)$ by the formula
\begin{equation}\label{delta}
\Delta^{E_p}=-\sum_{j=1}^{2n}\left[(\nabla^{E_p}_{e_j})^2-\nabla^{E_p}_{\nabla^{TX}_{e_j}e_j}\right],
\end{equation}
where $\{e_j\}_{j=1}^{2n}$ is any local orthonormal frame of $TX$ with respect to $g^{TX}$.

For any $p\in\N^*$ and any Hermitian smooth section $\Phi\in\cinf(X,\End(E))$, the \emph{renormalized Bochner Laplacian} $\Delta_{p,\Phi}$ is the second order differential operator acting on $\cinf(X,E_p)$ by the formula
\begin{equation}\label{deltalpe}
\Delta_{p,\Phi}=\Delta^{E_p}- p\Tr[\dot{R}^L]+\Phi.
\end{equation}
\end{defi}
From now on, we fix $\Phi\in\cinf(X,\End(E))$ and simply write $\Delta_p$ for the associated renormalized Bochner Laplacian. In the Kähler case, if $g^{TX}=g^{TX}_\om$ and if $\Phi$ is equal to $-\sqrt{-1}R^E$ contracted with $\om$, we recover twice the Kodaira Laplacian of $E_p$. On the other hand, if $g^{TX}=g^{TX}_\om$ and $E=\C$, we recover \cref{deltintro}.

The \emph{$L^2$-Hermitian product} $\<\cdot,\cdot\>_p$ on $\cinf(X,E_p)$ is given for any $s_1,s_2\in\cinf(X,E_p)$ by the formula
\begin{equation}\label{L2}
\<s_1,s_2\>_p=\int_X \<s_1(x),s_2(x)\>_{E_p} dv_X(x).
\end{equation}
Let $\|\cdot\|_p$ be the associated $L^2$-norm, and
let $L^2(X,E_p)$ be the completion of $\cinf(X,E_p)$ with respect to 
$\|\cdot\|_p$. Then $\Delta_p$ is a self-adjoint second order differential operator on $L^2(X,E_p)$, and has discrete spectrum contained in $\R$. Furthermore, we have the following refinement of \cite[Th.~2a)]{GU88}.

\begin{theorem}{\cite[Cor.~1.2]{MM02}}\label{specdeltapphi}
There exist $\til{C},~C>0$ such that for all $p\in\N^*$,
\begin{equation}
\Spec(\Delta_p)\subset\ [-\til{C},\til{C}]\ \cup\ ]2\mu_0 p-C,+\infty[,
\end{equation}
where $\mu_0=\inf\limits_{x\in X,v\in T^{(1,0)}_xX} R^L_x(v,\overline{v})/g^{TX}_{x}(v,\overline{v})$.
\end{theorem}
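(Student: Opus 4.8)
The plan is to reduce the assertion to the spectral gap for the square of a spin$^c$ Dirac operator by a Lichnerowicz-type (Bochner--Kodaira) identity, and then to prove that gap by comparison with an explicitly solvable model operator on $\R^{2n}$. First I would introduce the (modified) spin$^c$ Dirac operator $D_p$ on $\Om^{0,\bullet}(X,E_p)=\bigoplus_{q=0}^{n}\cinf(X,\Lambda^q\otimes E_p)$, where $\Lambda^q=\Lambda^q(T^{*(0,1)}X)$ and $D_p$ is built from the Clifford connection induced by $\nabla^{TX},\nabla^L,\nabla^E$. The Lichnerowicz formula then gives
\begin{equation*}
D_p^2=\Delta^{E_p\otimes\Lambda^\bullet}+p\,\mathcal R^L+A,
\end{equation*}
where $A$ is a $p$-independent bounded bundle endomorphism (collecting $R^E$, the scalar curvature of $g^{TX}$, the curvature of $K_X$, and the torsion correction) and $\mathcal R^L$ is a fibrewise self-adjoint endomorphism of $\Lambda^\bullet$, diagonal in the monomial frames; the crucial algebraic fact is that, writing $0<a_1(x)\le\cdots\le a_n(x)$ for the eigenvalues of $\dot R^L_x$, the eigenvalue of $\mathcal R^L$ on the degree-$q$ monomial $j_1<\cdots<j_q$ is $2\sum_{k=1}^{q}a_{j_k}(x)-\Tr[\dot R^L_x]$, so that $\mathcal R^L=-\Tr[\dot R^L_x]$ on $\Lambda^0$ and $\mathcal R^L\ge -\Tr[\dot R^L_x]+2\mu_0$ on $\bigoplus_{q\ge1}\Lambda^q$. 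In particular the degree-$0$ part of $D_p^2$ is $\Delta^{E_p}-p\Tr[\dot R^L]+A|_{\Lambda^0}$, which differs from $\Delta_p$ only by the bounded section $\Phi-A|_{\Lambda^0}\in\cinf(X,\End(E))$.

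The core step is the spectral gap for $D_p^2$: there exist $C,C'>0$ with $\Spec(D_p^2)\subset[0,C']\cup(2\mu_0 p-C,+\infty)$ for all $p$. The guiding model is the Bargmann--Fock picture: rescaling geodesic coordinates at a point $x_0$ by $\sqrt p$ and trivialising the bundles, $\tfrac1p D_p^2$ converges to the model operator $\LL_{x_0}$ on $\Lambda^\bullet\otimes E_{x_0}$-valued functions on $\R^{2n}$ attached to the connection of constant curvature $R^L_{x_0}$; $\LL_{x_0}$ is a sum of Landau Hamiltonians twisted by the Clifford term, and the creation/annihilation formalism gives $\Spec(\LL_{x_0})\subset\{0\}\cup[2\mu_0,+\infty)$ with zero-eigenspace exactly the Bargmann--Fock space sitting in degree $0$ (the first Landau level in degree $0$ is at $2a_1(x_0)\ge2\mu_0$, and in degree $q\ge1$ the oscillator ground energy $\Tr[\dot R^L_{x_0}]$ is shifted by the $\mathcal R^L$-eigenvalue $2\sum_k a_{j_k}(x_0)-\Tr[\dot R^L_{x_0}]$ to $2\sum_k a_{j_k}(x_0)\ge2\mu_0$). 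I would then glue these local estimates by an IMS-type localisation with a quadratic partition of unity, whose gradient and Hessian contributions the Lichnerowicz identity exhibits as first order in the rescaled variables and hence absorbable into a fixed fraction of the large harmonic-oscillator term; this yields $\langle D_p^2 s,s\rangle\ge(2\mu_0 p-C)\|s\|^2$ for $s$ orthogonal to a finite-dimensional space $W_p$ on which $\langle D_p^2 s,s\rangle\le C'\|s\|^2$ (the latter because on degree-$0$ quasi-Bargmann--Fock states $\langle\Delta^{E_p\otimes\Lambda^\bullet}s,s\rangle=p\Tr[\dot R^L]\|s\|^2+O(\|s\|^2)$ cancels $p\langle\mathcal R^L s,s\rangle=-p\Tr[\dot R^L]\|s\|^2$ up to $O(\|s\|^2)$). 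By the min-max principle this gives $\Spec(D_p^2)\subset[0,C']\cup(2\mu_0 p-C,+\infty)$.

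Finally I would transfer the gap to $\Delta_p$. Set $\mathbf{\Delta}_p:=\Delta^{E_p\otimes\Lambda^\bullet}+p\,\mathcal R^L+\Phi\otimes\Id_{\Lambda^\bullet}$; this operator preserves the $\Z$-grading by form degree, restricts in degree $0$ to $\Delta_p$, and equals $D_p^2+(\Phi\otimes\Id_{\Lambda^\bullet}-A)$ with $C'':=\sup_X\norm{\Phi\otimes\Id_{\Lambda^\bullet}-A}<\infty$. Hence, using that a self-adjoint perturbation of norm $\le C''$ moves the spectrum by at most $C''$,
\begin{equation*}
\Spec(\Delta_p)=\Spec(\mathbf{\Delta}_p|_{\Om^{0,0}})\subset\Spec(\mathbf{\Delta}_p)\subset\Spec(D_p^2)+[-C'',C''],
\end{equation*}
which by the previous step lies in $[-\til C,\til C]\cup(2\mu_0 p-C-C'',+\infty)$ with $\til C:=C'+C''$; this is the asserted inclusion, and it refines \cite[Th.~2.a]{GU} by giving both the gap and the optimal threshold $2\mu_0 p$. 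The main obstacle is the second step: the explicit diagonalisation of $\LL_{x_0}$ is routine, but upgrading it to the sharp bound $\langle D_p^2 s,s\rangle\ge(2\mu_0 p-C)\|s\|^2$ on $W_p^\perp$ uniformly in $p$, while keeping the localisation error $O(1)$ rather than $O(\sqrt p)$ — the latter would swallow the gap — is the delicate analytic point, carried out in \cite{MM02} (see also \cite[Chap.~1]{MM}); in the Kähler case one may instead argue directly from $\dbar_p^2=0$ and the Bochner--Kodaira--Nakano inequality on $(0,1)$-forms, which bounds the first nonzero eigenvalue of $\dbar_p^*\dbar_p$ on sections below by $\mu_0 p-O(1)$, and then the same transfer argument applies.
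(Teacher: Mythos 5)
The paper offers no proof of \cref{specdeltapphi}: it is imported verbatim from \cite[Cor.~1.2]{MM02}, so there is no internal argument to compare yours against. Judged on its own terms, your reduction is sound and is essentially how the result is deduced from the Dirac-operator spectral gap in the literature: the Lichnerowicz formula, the computation of the Clifford action of $pR^L$ on monomials (eigenvalue $2\sum_k a_{j_k}-\Tr[\dot{R}^L]$, hence $-\Tr[\dot{R}^L]$ in degree $0$ and at least $-\Tr[\dot{R}^L]+2\mu_0$ in positive degree), and the transfer to $\Delta_p$ via the graded operator $\mathbf{\Delta}_p$ together with the fact that a bounded self-adjoint perturbation displaces the spectrum by at most its norm, all work, and the restriction of $\mathbf{\Delta}_p$ to degree $0$ is indeed $\Delta_p$ because you use the Clifford (projected) connection on $\Lambda(T^{*(0,1)}X)$, whose induced connection on $\Lambda^0$ is trivial. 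One phrase is loose: for non-integrable $J$ the operator $D_p^2$ does not preserve the $\Z$-grading, only the parity, so ``the degree-$0$ part of $D_p^2$'' is not well defined; this is harmless, since your actual transfer only uses $\mathbf{\Delta}_p$ and the bound on $\mathbf{\Delta}_p-D_p^2$.

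Where you genuinely diverge from the cited source is the core step. In \cite{MM02} the gap for $D_p^2$ is not obtained by rescaling to the Bargmann--Fock model and IMS localisation; it is proved by Lichnerowicz/Bochner--Kodaira--Nakano-type estimates in the spirit of \cite{BV}, giving $\langle D_p^2s,s\rangle\geq(2\mu_0p-C)\norm{s}^2$ for $s$ of positive degree, combined with the observation that a nonzero-eigenvalue eigensection of even parity is sent by $D_p$ to a positive-degree eigensection with the same eigenvalue; this produces the $O(1)$ error directly, with no localisation (see also \cite[\S 1.5]{MM}). Your model-operator route can be pushed through, but note a small imprecision: an $O(\sqrt{p})$ localisation error would not ``swallow the gap''; it would only weaken $2\mu_0p-C$ to $2\mu_0p-C\sqrt{p}$, which still suffices to define $\HH_p$ but not to prove the statement as written. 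Since you ultimately defer this delicate estimate to \cite{MM02}, exactly as the paper itself does, the proposal stands as a correct reduction plus citation rather than a self-contained proof.
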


For any $p\in\N^*$, we define the \emph{space of almost holomorphic sections} $\HH_p\subset L^2(X,E_p)$ of $E_p$ as the direct sum of the eigenspaces of $\Delta_p$ associated with the eigenvalues in $[-\til{C},\til{C}]$. Then by standard elliptic theory, we have $\HH_p\subset\cinf(X,E_p)$ and $\dim\HH_p<+\infty$. By \cite[Cor.1.2]{MM02}, for any $p\in\N^*$ big enough, the dimension of $\HH_p$ is computed by the Riemann-Roch-Hirzebruch formula,
and is in particular a polynomial of degree $n$ in $p$.
Note that by \cite[Cor.\,3.3]{MM08a}, the eigenvalues
in $[-\til{C},\til{C}]$ are not all equal to $0$ in general, and
for $p\in\N^*$ big enough, this happens
if and only if $(X,\om,J)$ is in fact Kähler.

Let $\pi_j:X\times X\fl X,~j=1,2$, denote the first and second projections. For any $p\in\N^*$,
we define a vector bundle over $X\times X$ by the formula
\begin{equation}
E_p\boxtimes E_p^*=\pi_1^*E_p\otimes\pi_2^*E_p^*.
\end{equation}
The orthogonal projection $P_p:\cinf(X,E_p)\fl\HH_p$ with respect to \cref{L2} has smooth Schwartz kernel $P_p(\cdot,\cdot)\in\cinf(X\times X,E_p\boxtimes E_p^*)$ with respect to $dv_X$, defined for any $s\in\cinf(X,E_p)$ and $x\in X$ by
\begin{equation}\label{ker}
(P_p s)(x)=\int_X P_p(x,y)s(y) dv_X(y).
\end{equation}
For any $F\in\cinf(X,\End(E))$, we define the \emph{Berezin-Toeplitz quantization} of $F$ as the family $\{T_{F,p}\}_{p\in\N^*}$ of operators acting on $\cinf(X,E_p)$ for any $p\in\N^*$ by
\begin{equation}\label{Toep}
T_{F,p}=P_pFP_p,
\end{equation}
where $F$ denotes the operator of pointwise application of
the endomorphism $F$. Then $T_{F,p}$ has smooth Schwartz kernel
$T_{F,p}(\cdot,\cdot)\in\cinf(X\times X,E_p\boxtimes E_p^*)$
with respect to $dv_X$, given for any $x,y\in X$ by
\begin{equation}\label{Toepker}
T_{F,p}(x,y)=\int_X P_p(x,w)F(w)P_p(w,y) dv_X(w).
\end{equation}

For any $\sigma>0$, we use the notation $O(p^{-\sigma})$ as $p\fl+\infty$ in the usual sense with respect to $|\cdot|_E$, uniformly in $x\in X$. The notation $O(p^{-\infty})$ means $O(p^{-\sigma})$ for any $\sigma>0$. Unless otherwise stated, we also use the convention to sum on free indices appearing twice in a single term.

\subsection{Local model}\label{locmod}

Let $(u,v):=(u_1,\dots,u_n,v_1,\dots,v_n)\in\R^{2n}$ be the canonical symplectic coordinates associated with the standard symplectic form $\Om$ on $\R^{2n}$ given by
\begin{equation}\label{Om}
\Om=\sum_{j=1}^n du_j\wedge dv_j.
\end{equation}
We write $\R^n\times\{0\}=\{(u,0)\in\R^{2n}~|~u\in\R^n\}$ and $\{0\}\times\R^n=\{(0,v)\in\R^{2n}~|~v\in\R^n\}$ for the two canonical oriented Lagrangian subspaces of $(\R^{2n},\Om)$ and write $\<\cdot,\cdot\>, |\cdot|$ for the canonical scalar product and norm of $\R^{2n}$. 
To match with the notations of \cite{MM08a}, we will write $Z:=(u,v)\in\R^{2n}$, and use the same notation for the radial vector field of $\R^{2n}$. For any $\epsilon>0$, we denote by $B^{\R^{2n}}(0,\epsilon)$ the ball of center $0$ and radius $\epsilon$ in $\R^{2n}$, and for any linear subspace $\Sigma\subset\R^{2n}$, we write $B^{\Sigma}(0,\epsilon):=B^{\R^{2n}}(0,\epsilon)\cap\Sigma$.

For any $m\in\N$, we write $|\cdot|_{\CC^m}$ for the local
$\CC^m$-norm on local sections of $E_p\boxtimes E_p^*$ over $X\times X$ induced by $h^L,~h^E,~\nabla^L,~\nabla^E$.

\begin{prop}{\cite[\S~1.1]{MM08a}}\label{theta}
For any $m,k\in\N,~\epsilon>0$ and $\theta\in\ ]0,1[$, there is $C_{m,k,\theta,\epsilon}>0$ such that for all $p\in\N^*$, and $x,x'\in X$ satisfying $d^X(x,x')>\epsilon p^{-\theta/2}$,
\begin{equation}
|P_p(x,x')|_{\CC^m}\leq C_{m,k,\theta,\epsilon}p^{-k}.
\end{equation}
%
\end{prop}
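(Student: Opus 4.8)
The plan is to derive the estimate from the spectral gap of \cref{specdeltapphi} by functional calculus, combined with the finite propagation speed of the wave equation --- this is the argument of Ma--Marinescu. Since the prequantization condition \cref{preq} makes $\dot{R}^L$ a positive endomorphism, $\mu_0>0$, so for $p$ large there is a genuine gap in $\Spec(\Delta_p)$ between $\til{C}$ and $2\mu_0 p-C$. I would work with the nonnegative self-adjoint first-order operator $D_p:=(\Delta_p+\til{C}+1)^{1/2}$ on $L^2(X,E_p)$: since $\Delta^{E_p}$ has principal symbol $|\xi|^2_{g^{TX}}$, so does $D_p^2$, hence $\cos(tD_p)$ solves a wave equation propagating at speed at most $1$ for $d^X$. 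One has $\Spec(D_p)\subset[1,b_0]\cup\,]c_1\sqrt{p},+\infty[$ with $b_0:=\sqrt{2\til{C}+1}$ and some $c_1>0$, and $P_p=\mathbf 1_{[0,b_0]}(D_p)$.

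First I would build a localized approximation of this spectral projector. Fix an even $\phi\in\cinf_c(\,]-1,1[\,)$ with $\int_\R\phi=1$ and $\int_\R t^j\phi(t)\,dt=0$ for $1\le j\le M$, where $M\in\N$ is to be chosen large, and set $\Phi_p(\lambda):=\int_\R\phi(t)\,e^{\sqrt{-1}\,\epsilon p^{-\theta/2}t\lambda}\,dt$. Then $\Phi_p\in\mathcal{S}(\R)$ and its Fourier transform in $\lambda$ is supported in $[-\epsilon p^{-\theta/2},\epsilon p^{-\theta/2}]$; the vanishing moments give $\Phi_p(\lambda)=1+O\big((p^{-\theta/2}|\lambda|)^{M+1}\big)$, while the smoothness of $\phi$ gives $|\Phi_p(\lambda)|\le C_N(1+\epsilon p^{-\theta/2}|\lambda|)^{-N}$ for every $N$. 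Since $\theta<1$, $p^{-\theta/2}\sqrt{p}\to+\infty$, so on the large part $]c_1\sqrt{p},+\infty[$ of $\Spec(D_p)$ the function $\Phi_p$ and all its polynomial multiples are $O(p^{-\infty})$, whereas on the bounded small part $[1,b_0]$ one has $1-\Phi_p=O(p^{-(M+1)\theta/2})$. Hence, writing $G_p:=\mathbf 1_{[0,b_0]}-\Phi_p$, for each $Q,N\in\N$ I get $\sup_{\lambda\in\Spec(D_p)}(1+\lambda^2)^Q|G_p(\lambda)|=O(p^{-N})$ once $M$ is large enough in terms of $Q,N,\theta$.

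Then I would combine the two pieces. By finite propagation speed, the Schwartz kernel of $\Phi_p(D_p)=\frac{1}{2\pi}\int_\R\widehat{\Phi_p}(t)\cos(tD_p)\,dt$ is supported in $\{(x,x')\ :\ d^X(x,x')\le\epsilon p^{-\theta/2}\}$, so that $P_p(x,x')=G_p(D_p)(x,x')$ whenever $d^X(x,x')>\epsilon p^{-\theta/2}$. To bound the latter in $\CC^m$-norm I would first note that $\|(1+\Delta_p)^Q\,G_p(D_p)\,(1+\Delta_p)^Q\|_{L^2\to L^2}=\sup_{\mu\in\Spec(\Delta_p)}\big|(1+\mu)^{2Q}G_p\big((\mu+\til{C}+1)^{1/2}\big)\big|=O(p^{-\infty})$ by the previous step, and then invoke the Sobolev embedding $H^s(X,E_p\boxtimes E_p^*)\hookrightarrow\CC^m$ for $s=s(m,n)$, whose constant grows at most polynomially in $p$ because the curvature of $E_p$ is $pR^L+R^E$ (this $p$-dependence of the Sobolev norms is standard, see \cite[Chap.~4]{MM}). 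For $Q$ chosen large in terms of $m$ and $n$ this yields $|G_p(D_p)(x,x')|_{\CC^m}\le C_m\,p^{N_0}\,\|(1+\Delta_p)^Q\,G_p(D_p)\,(1+\Delta_p)^Q\|=O(p^{-\infty})$, uniformly in $x,x'$; in particular it is $\le C_{m,k,\theta,\epsilon}\,p^{-k}$ for $p$ large, and enlarging the constant absorbs the finitely many small $p$.

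I expect the main obstacle to be the middle step: extracting from the spectral gap a function whose Fourier transform lives in the shrinking window $[-\epsilon p^{-\theta/2},\epsilon p^{-\theta/2}]$ yet still approximates $\mathbf 1_{[0,b_0]}$ on $\Spec(D_p)$ to all polynomial orders. This is exactly where the gap of width $\sim p$ has to be played against the admissible exponential type $\sim p^{-\theta/2}\ge p^{-1/2}$, and where the hypothesis $\theta<1$ is essential. By contrast, upgrading $\CC^0$ to $\CC^m$ and the uniformity in $x,x'$ amount to routine bookkeeping of the $p$-dependence of Sobolev constants for $E_p$ in the framework of \cite{MM08a} and \cite{MM}.
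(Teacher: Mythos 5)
Your proposal is correct and follows essentially the same route as the source of this statement: the paper does not prove \cref{theta} but quotes it from \cite{MM08a}, where the argument is precisely this combination of the spectral gap of \cref{specdeltapphi} with finite propagation speed for $\cos(tD_p)$ applied to an even function whose Fourier transform is supported in $[-\epsilon p^{-\theta/2},\epsilon p^{-\theta/2}]$, followed by Sobolev embeddings whose constants grow only polynomially in $p$ (see also \cite{DLM06} and \cite[Chap.~4]{MM}). Your balancing of the shrinking Fourier support $p^{-\theta/2}$ against the gap of size $\sqrt{p}$ for $D_p$, using $\theta<1$, is exactly the key point, and the resulting dependence of the constant on the moment order $M$ (hence on $k$) is harmless since $C_{m,k,\theta,\epsilon}$ is allowed to depend on $k$.
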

Let us now take $x_0\in X,~\epsilon_0>0,~V\subset X$ open neighbourhood of $x_0$ and
\begin{equation}\label{defphi}
\phi_{x_0}:B^{\R^{2n}}(0,\epsilon_0)\subset\R^{2n}\fl V
\end{equation}
a diffeomorphism sending $0$ to $x_0$, such that its differential at $0$ identifies $\Om$ and $\<\cdot,\cdot\>$ on $\R^{2n}$ with $\om$ and $g^{TX}_\om$ on $T_{x_0}X$. Let us make such a choice of diffeomorphisms \cref{defphi} for any $x_0$ in a small open set, smoothly in $x_0$. We cover $X$ with such open sets, and choose $\epsilon_0>0$ which does not depend on $x_0\in X$. As two Riemannian metrics induce equivalent distances in a continuous way with respect to parameters, there exist $0<a<b$ such that for any $x_0\in X$ and $Z,Z'\in B^{\R^{2n}}(0,\epsilon_0)$,
\begin{equation}\label{gcnt}
a|Z-Z'|<d^X(\phi_{x_0}(Z),\phi_{x_0}(Z'))<b|Z-Z'|.
\end{equation}
%
%
Then by \cref{gcnt}, we get the following corollary of \cref{theta}.

\begin{cor}\label{thetagal}
For any $\epsilon>0,~m,k\in\N$ and $\theta\in\ ]0,1[$, there is $C_{m,k,\theta,\epsilon}>0$ such that for all $x_0\in X,~p\in\N^*$ and $Z,Z'\in B^{\R^{2n}}(0,\epsilon_0)$ such that $|Z-Z'|>\epsilon p^{-\theta/2}$,
\begin{equation}
|P_p(\phi_{x_0}(Z),\phi_{x_0}(Z'))|_{\CC^m}\leq C_{m,k,\theta,\epsilon'}p^{-k}.
\end{equation}
\end{cor}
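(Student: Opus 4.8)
The plan is to deduce \cref{thetagal} directly from \cref{theta} by transporting the hypothesis and the conclusion through the family of diffeomorphisms $\phi_{x_0}$. First I would observe that the inequality \cref{gcnt} gives, for any $x_0\in X$ and $Z,Z'\in B^{\R^{2n}}(0,\epsilon_0)$, the lower bound $d^X(\phi_{x_0}(Z),\phi_{x_0}(Z'))>a|Z-Z'|$. Hence if $|Z-Z'|>\epsilon p^{-\theta/2}$ then $d^X(\phi_{x_0}(Z),\phi_{x_0}(Z'))>a\epsilon\, p^{-\theta/2}$. Setting $\epsilon'=a\epsilon>0$, the points $x=\phi_{x_0}(Z)$ and $x'=\phi_{x_0}(Z')$ satisfy the hypothesis $d^X(x,x')>\epsilon' p^{-\theta/2}$ of \cref{theta} with the same exponent $\theta$.

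Next I would apply \cref{theta} with the parameters $m$, $k$, $\theta$ and $\epsilon'$, yielding a constant $C_{m,k,\theta,\epsilon'}>0$, independent of $p\in\N^*$ and of the pair $x,x'\in X$, such that $|P_p(x,x')|_{\CC^m}\leq C_{m,k,\theta,\epsilon'}\,p^{-k}$. In particular this holds for $x=\phi_{x_0}(Z)$, $x'=\phi_{x_0}(Z')$, which is exactly the asserted bound on $|P_p(\phi_{x_0}(Z),\phi_{x_0}(Z'))|_{\CC^m}$. The only point needing a word of care is that the $\CC^m$-norm $|\cdot|_{\CC^m}$ appearing in \cref{thetagal} should be understood as the $\CC^m$-norm in the variables $Z,Z'$ computed through the charts $\phi_{x_0}$; since the family of charts \cref{defphi} is chosen smoothly in $x_0$ over a cover of the compact manifold $X$ by finitely many such open sets, the change of variables comparing this norm with the intrinsic $\CC^m$-norm on $E_p\boxtimes E_p^*$ over $X\times X$ introduces only a multiplicative constant depending on $m$ and on the (finitely many) charts, but not on $x_0$ or $p$. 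Absorbing this constant into $C_{m,k,\theta,\epsilon'}$ completes the argument.

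I do not expect any genuine obstacle here: the statement is a routine corollary, and the only mildly delicate bookkeeping is the uniformity in $x_0$, which is guaranteed by compactness of $X$ together with the fact, already recorded in the text preceding \cref{gcnt}, that $\epsilon_0$ and the constants $a,b$ can be chosen independent of $x_0$. One should simply note that the exponent $\theta\in\,]0,1[$ and the order $k\in\N$ are carried through unchanged, and that the new radius parameter $\epsilon'=a\epsilon$ is positive because $a>0$.
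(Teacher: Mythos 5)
Your argument is exactly the paper's: the distance comparison \cref{gcnt} converts the hypothesis $|Z-Z'|>\epsilon p^{-\theta/2}$ into $d^X(\phi_{x_0}(Z),\phi_{x_0}(Z'))>a\epsilon\,p^{-\theta/2}$, and then \cref{theta} applied with $\epsilon'=a\epsilon$ gives the bound, uniformly in $x_0$ since $a$ and $\epsilon_0$ are chosen independent of $x_0$. This matches the paper's (one-line) deduction, and your extra remark on the $\CC^m$-norm bookkeeping is fine.
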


We use the following explicit local model on $\R^{2n}$ for the Bergman kernel, as found in \cite[(3.25)]{MM08b} for any $Z,Z'\in\R^{2n}$,
\begin{equation}\label{PPreal}
\begin{split}
\PP_{x_0}(Z,Z')=\exp\left(-\frac{\pi}{2} |Z-Z'|^2-\pi\sqrt{-1}\Om(Z,Z')\right).
\end{split}
\end{equation}
Note that the difference of \cref{PPreal} with \cite[(3.25)]{MM08b} comes from the fact that we are working with symplectic coordinates
$Z\in\R^{2n}$ adapted to $\om$ via \cref{defphi} instead of metric
coordinates adapted to $g^{TX}$
via the exponential map as in \cite[\S\ 3.2]{MM08b}.

Let $dZ$ be the canonical Lebesgue measure of $\R^{2n}$, and define
the smooth function $\kappa_{x_0}\in\cinf(B^{\R^{2n}}(0,\epsilon_0),\R)$ such that for any $Z\in B^{\R^{2n}}(0,\epsilon_0)$
in the chart \cref{defphi},
\begin{equation}\label{defkappa}
dv_{X}(Z)=\kappa_{x_0}(Z)dZ,~~\text{with}~~
\kappa_{x_0}(0)=\det\left(\dot{R}^L_{x_0}/2\pi\right)^{-1}.
\end{equation}
In the chart \cref{defphi}, we identify $E$, $L$ over $B^{\R^{2n}}(0,\epsilon_0)$ with $E_{x_0}, L_{x_0}$ through parallel transport with respect to $\nabla^E, \nabla^L$ along radial lines of $B^{\R^{2n}}(0,\epsilon_0)$. For any $x_0$ in a small open set, we identify $L_{x_0}$ with $\C$ using any unit local frame of $L$. 

For any $f\in\cinf(X,E)$, we write $f_{x_0}\in\cinf(B^{\R^{2n}}(0,\epsilon_0),E_{x_0})$ for the restriction of $f$ to $B^{\R^{2n}}(0,\epsilon_0)$ in this trivialization. Similarly, for any $T_p(\cdot,\cdot)\in\cinf(X\times X, E_p\boxtimes E_p^*)$, we denote by $T_{p,x_0}(Z,Z')\in\End(E_{x_0})$ its image evaluated at $Z,Z'\in B^{\R^{2n}}(0,\epsilon_0)$ in this trivialization. If $Q(Z,Z')$ is a polynomial in $Z,Z'\in\R^{2n}$, we write $Q\PP_{x_0}(Z,Z'):=Q(Z,Z')\PP_{x_0}(Z,Z')$. 

Recall that we chose a family of charts $\{\phi_{x_0}\}_{x_0\in W}$ as in \cref{defphi} smoothly in $x_0\in W$, where $W$ is a small open set of $X$. Then $P_{p,x_0}(Z,Z')$ can be seen as a smooth section of $\pi^*\End(E)$ over $W\times B^{\R^{2n}}(0,\epsilon_0)\times B^{\R^{2n}}(0,\epsilon_0)$ evaluated in $x_0\in W,~Z,Z'\in B^{\R^{2n}}(0,\epsilon_0)$, where $\pi:W\times B^{\R^{2n}}(0,\epsilon_0)\times B^{\R^{2n}}(0,\epsilon_0)\fl W$ is the first projection. Let us write $|\cdot|_{\CC^m(X)}$ for the local $\CC^m$-norm on local sections of $\pi^*\End(E)$ induced by $h^E$ and derivation by $\nabla^{\pi^*\End(E)}$ in the direction of $x_0\in W$. We are now ready to state the following result, which was first proved in \cite[Th.~4.18']{DLM06} in the case of the spin$^c$ Dirac operator, and which in the following form comes essentially from \cite[Th.~2.1]{LMM16}.

\begin{lem}\label{asy}
For any $m,k\in\N$, $\epsilon>0$ and $\delta\in\ ]0,1[$, there is $C>0$ and $\theta\in\ ]0,1[$ such that for all $x_0\in X,~p\in\N^*$ and $|Z|,|Z'|<\epsilon p^{-\theta/2}$, 
\begin{multline}
\Big|p^{-n} P_{p,x_0}(\phi_{x_0}(Z),\phi_{x_0}(Z'))\\
-\sum_{r=0}^k p^{-r/2}J_{r,x_0}\PP_{x_0}(\sqrt{p}Z,\sqrt{p}Z')\kappa_{x_0}^{-1/2}(Z)\kappa_{x_0}^{-1/2}(Z')\Big|_{\CC^m(X)}
\leq Cp^{-\frac{k+1}{2}+\delta},
\end{multline}
where $\{J_{r,x_0}(Z,Z')\}_{r\in\N}$ is a family of polynomials in $Z,Z'\in\R^{2n}$ of the same parity as $r$ and with values in $\End(E_{x_0})$, depending smoothly on $x_0\in X$. Furthermore, we have
\begin{equation}\label{J0}
J_{0,x_0}(Z,Z')\equiv\Id_{E_{x_0}}.
\end{equation}

\end{lem}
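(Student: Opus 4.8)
The plan is to use the analytic localization and rescaling technique of Dai--Liu--Ma \cite{DLM06}, in the form developed by Ma--Marinescu and refined in \cite{LMM}. Fix $x_0$ in the open set $W$ and work in the chart \cref{defphi} with $E_p=L^p\otimes E$ trivialized by radial parallel transport; then $\Delta_p$ becomes a second order differential operator on $\cinf(B^{\R^{2n}}(0,\epsilon_0),E_{x_0})$ whose coefficients depend smoothly on $x_0\in W$. First I would extend these coefficients to all of $\R^{2n}$ by multiplying by a fixed cut-off supported in $B^{\R^{2n}}(0,\epsilon_0)$ and equal to $1$ on $B^{\R^{2n}}(0,\epsilon_0/2)$, conjugate by $\kappa_{x_0}^{1/2}$ so that the resulting operator is formally self-adjoint with respect to the Lebesgue measure $dZ$, and rescale by $Z\mapsto Z/\sqrt{p}$. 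Writing $t=p^{-1/2}$, this produces operators $\LL_t=\LL_{t,x_0}$ on $\R^{2n}$, smooth in $x_0$, with a Taylor expansion $\LL_t=\sum_{j=0}^{m}t^{j}\mathcal{O}_{j}+O(t^{m+1})$ coming from the Taylor expansions at $x_0$ of $g^{TX}$, $\nabla^E$, $R^L$, $R^E$, $\Phi$. Here $\LL_0=\mathcal{O}_0$ is a generalized harmonic oscillator on $\R^{2n}$ acting on $E_{x_0}$-valued functions, with a one-dimensional kernel over $E_{x_0}$ and a spectral gap bounded below uniformly in $x_0$, and each $\mathcal{O}_j$ is a differential operator with polynomial coefficients which is even under $Z\mapsto -Z$ for $j$ even and odd for $j$ odd.

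The next step is the uniform functional-analytic input. Using the quadratic confinement of $\LL_0$ together with \cref{specdeltapphi}, one proves weighted Sobolev estimates for $\LL_t$ on $\R^{2n}$ that are uniform in $t\in\,]0,1]$ and $x_0\in W$, and hence uniform bounds on the resolvent $(\lambda-\LL_t)^{-1}$ in these norms for $\lambda$ in a fixed punctured disc around $0\in\C$. The spectral projection onto the span of the small eigenvalues of $\LL_t$ is then $P_{0,t}=\frac{1}{2\pi\sqrt{-1}}\oint_{|\lambda|=\delta_0}(\lambda-\LL_t)^{-1}\,d\lambda$ for a small fixed $\delta_0>0$; plugging the Taylor expansion of $\LL_t$ into the resolvent identity and integrating term by term yields $P_{0,t}=\sum_{r=0}^{k}t^{r}P_{0}^{(r)}+O(t^{k+1})$, with remainder controlled in the weighted Sobolev norms. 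Sobolev embedding with the same harmonic-oscillator weights turns this into the asserted $\CC^m$-estimate on Schwartz kernels, and the $x_0$-derivatives are incorporated by differentiating the contour integral, using that all coefficients are smooth in $x_0$ with uniform bounds; this gives the $\CC^m(X)$-estimate.

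It remains to identify the kernels $J_{r,x_0}\PP_{x_0}$ and to transfer the expansion back to $P_{p,x_0}$. Each $P_{0}^{(r)}$ is a finite sum of compositions of $P_{0}^{(0)}$, the operators $\mathcal{O}_{j}$ with $j\leq r$, and the partial inverse of $\LL_0$ on $(\Ker\LL_0)^{\perp}$, with the $\mathcal{O}$-indices summing to $r$; since $P_{0}^{(0)}$ is the orthogonal projection onto $\Ker\LL_0$, whose Schwartz kernel is the Bargmann-type projector computed explicitly as $\PP_{x_0}(Z,Z')$ in \cref{PPreal}, an induction on $r$ shows the kernel of $P_{0}^{(r)}$ has the form $J_{r,x_0}(Z,Z')\PP_{x_0}(Z,Z')$ with $J_{r,x_0}$ a polynomial valued in $\End(E_{x_0})$; the parity of the $\mathcal{O}_j$ together with $\PP_{x_0}(-Z,-Z')=\PP_{x_0}(Z,Z')$ forces $J_{r,x_0}$ to have the parity of $r$, and in particular $P_{0}^{(0)}$ gives $J_{0,x_0}\equiv\Id_{E_{x_0}}$, i.e.\ \cref{J0}. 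Finally, on the ball of radius $\epsilon_0\sqrt{p}/2$ the Schwartz kernel of $P_{0,t}$ with respect to $dZ$ coincides, up to the rescaling Jacobian $p^{-n}$ and the conjugation factors $\kappa_{x_0}^{\pm1/2}$, with $P_{p,x_0}$ evaluated at rescaled arguments, the discrepancy caused by the cut-off being $O(p^{-\infty})$ by \cref{thetagal} and the analogous estimate for $\LL_t$; rescaling the expansion of $P_{0,t}$ back by $\sqrt{p}$ then gives the statement with $t=p^{-1/2}$.

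The main obstacle will be the uniform analytic package on the non-compact model $\R^{2n}$: proving the weighted Sobolev and resolvent estimates for $\LL_t$ uniformly in both $p$ and $x_0\in W$, controlling simultaneously the cut-off error and the Taylor remainder of the geometry, and propagating everything through the $x_0$-derivatives. Once this machinery is available, the remaining, essentially algebraic, points — the polynomial form of $J_{r,x_0}$, its parity, and the value $J_{0,x_0}=\Id_{E_{x_0}}$ — are routine.
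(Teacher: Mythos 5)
Your proposal is correct, but note that the paper itself gives no proof of \cref{asy}, citing it from \cite{DLM06}, \cite{MM08a} and \cite{LMM}; your outline reproduces precisely the analytic-localization argument of those references (localization and rescaling of the renormalized Bochner Laplacian, Taylor expansion $\LL_t=\sum_j t^j\mathcal{O}_j$, contour-integral expansion of the spectral projection using the uniform spectral gap, identification of the kernel of the projection onto $\Ker\LL_0$ with $\PP_{x_0}$, and the parity argument giving \cref{J0}). The one step to phrase more carefully is the extension to $\R^{2n}$: one should not literally multiply the coefficients by a cut-off supported in $B^{\R^{2n}}(0,\epsilon_0)$ (this destroys ellipticity and the spectral gap outside the ball), but rather interpolate, via the cut-off, between the pulled-back connection and metric and the constant model data frozen at $x_0$, as in the cited construction, so that the operator on $\R^{2n}$ keeps a uniform spectral gap and the finite propagation speed argument controls the discrepancy by $O(p^{-\infty})$.
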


Parallel to \cref{theta} and \cref{asy}, we have the following result on the asymptotic expansion as $p\fl+\infty$ of the Berezin-Toeplitz operator \cref{Toep}. It was first proved in \cite[Lemma~4.6]{MM08b} in the spin$^c$ case, and in this form comes essentially from \cite[Lemma~3.3]{ILMM17}.

\begin{lem}\label{Toepasy}
Let $F\in\cinf(X,\End(E))$. Then for any $0<\epsilon\leq \epsilon_0,~m,k\in\N$ and $\theta\in\ ]0,1[$, there is $C_{m,k,\theta,\epsilon}>0$ such that for all $x_0\in X,~p\in\N^*,~Z,Z'\in\R^{2n}$, $|Z-Z'|>\epsilon p^{-\theta/2}$,
\begin{equation}
|T_{F,p}(\phi_{x_0}(Z),\phi_{x_0}(Z'))|_{\CC^m}\leq C_{m,k,\theta,\epsilon}p^{-k}.
\end{equation}
Furthermore, for any $m,k\in\N$, $\epsilon>0$ and $\delta\in]0,1[$, there is $C>0$ and $\theta\in]0,1[$ such that for all $x_0\in X,~p\in\N^*$, $|Z|,|Z'|<\epsilon p^{-\theta/2}$, 
\begin{multline}\label{Toepasyexp}
\Big|p^{-n} T_{F,p,x_0}(\phi_{x_0}(Z),\phi_{x_0}(Z'))\\
-\sum_{r=0}^k p^{-r/2} \Q_{r,x_0}\PP_{x_0}(\sqrt{p}Z,\sqrt{p}Z')\kappa_{x_0}^{-1/2}(Z)\kappa_{x_0}^{-1/2}(Z')\Big|_{\CC^m(X)}
\leq Cp^{-\frac{k+1}{2}+\delta},
\end{multline}
where $\{\Q_{r,x_0}(Z,Z')\}_{r\in\N}$ is a family of polynomials in $Z,Z'\in\R^{2n}$ of the same parity as $r$ and with values in $\End(E_{x_0})$, depending smoothly on $x_0\in X$. Furthermore, we have
\begin{equation}\label{Q0}
\Q_{0,x_0}(Z,Z')\equiv F_{x_0}.
\end{equation}
\end{lem}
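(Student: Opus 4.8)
The plan is to deduce \cref{Toepasy} from the already-established near-diagonal expansion of the Bergman kernel in \cref{asy}, together with the off-diagonal decay in \cref{thetagal}, by inserting these into the integral formula \cref{Toepker} for the Schwartz kernel of $T_{F,p}=P_pFP_p$. The underlying mechanism is that $\PP_{x_0}$ behaves, under convolution against $dv_{X,\om}$, like a reproducing kernel: one has the Gaussian calculus identity
\begin{equation*}
\int_{\R^{2n}}\PP_{x_0}(Z,W)\PP_{x_0}(W,Z')\,dW=\PP_{x_0}(Z,Z'),
\end{equation*}
and more generally convolution of $Q_1\PP_{x_0}$ with $Q_2\PP_{x_0}$ against $dW$ returns $(Q_1\ast Q_2)\PP_{x_0}$ for an explicit polynomial $Q_1\ast Q_2$ of parity $\deg Q_1+\deg Q_2$. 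This is exactly the formalism already used in \cite{MM08b,ILMM17} for the composition of Toeplitz kernels, and I would invoke it rather than rederive it.

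First I would prove the off-diagonal estimate. Fix $x_0$ and $Z,Z'$ with $|Z-Z'|>\epsilon p^{-\theta/2}$. Split the integration domain in \cref{Toepker} according to whether $w=\phi_{x_0}(W)$ is within distance $\sim p^{-\theta'/2}$ of $\phi_{x_0}(Z)$ or of $\phi_{x_0}(Z')$: for any $w$ at least one of $d^X(w,\phi_{x_0}(Z))$, $d^X(w,\phi_{x_0}(Z'))$ is $\gtrsim\epsilon p^{-\theta/2}$, so by \cref{theta} (or \cref{thetagal}) one of the two factors $P_p(\phi_{x_0}(Z),w)$, $P_p(w,\phi_{x_0}(Z'))$ is $O(p^{-k})$ in $\CC^m$ while the other is bounded by the trivial estimate $|P_p|_{\CC^m}=O(p^{n+m/2})$ coming from \cref{asy}; since $F$ and $\Vol(X)$ are fixed this yields the claimed $O(p^{-k})$ after relabelling $k$. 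Differentiation in $x_0$ and in $Z,Z'$ is handled by differentiating under the integral sign, which only costs powers of $p$ already absorbed.

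Next, the near-diagonal expansion \cref{Toepasyexp}. For $|Z|,|Z'|<\epsilon p^{-\theta/2}$, I would again localize the $w$-integral in \cref{Toepker} to $|W|<\epsilon' p^{-\theta'/2}$, the complement contributing $O(p^{-\infty})$ by the decay just established applied to at least one factor. On the localized region, substitute the expansion of \cref{asy} for both $P_{p,x_0}(Z,W)$ and $P_{p,x_0}(W,Z')$ and expand $F_{x_0}(W)$ in its Taylor series at $0$; after the change of variables $W=\sqrt p\,W'/\sqrt p$ — i.e. rescaling so that $\sqrt p W$ becomes the natural variable — the leading factors combine into $\int \PP_{x_0}(\sqrt pZ,\cdot)\,F_{x_0}(0)\,\PP_{x_0}(\cdot,\sqrt pZ')$, which by the reproducing property is $F_{x_0}(0)\PP_{x_0}(\sqrt pZ,\sqrt pZ')$ up to the $\kappa^{-1/2}$ factors; lower-order terms collect into polynomials $\Q_{r,x_0}$ of parity $r$ (the parity bookkeeping uses that $J_{r}$ has parity $r$, that Taylor coefficients of order $j$ of $F$ carry a weight $p^{-j/2}$ and parity $j$, and that the Gaussian convolution preserves total parity). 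The $\kappa_{x_0}^{-1/2}$ factors at the endpoints survive because the two interior $\kappa_{x_0}^{-1/2}(W)$ factors from the two copies of \cref{asy} multiply to $\kappa_{x_0}^{-1}(W)$, which is precisely the Jacobian turning $dv_{X,\om}$ into $dW$ in \cref{defkappa} — so the interior measure is flattened and Gaussian calculus applies cleanly. The error is controlled by combining the $Cp^{-(k+1)/2+\delta}$ errors of \cref{asy} with the trivial bounds on the other factor and the exponential decay of $\PP_{x_0}$ in the rescaled variable, which makes the rescaled integral converge uniformly; tracking the rescaling $p^n$ Jacobian gives the stated normalization by $p^{-n}$, and \cref{Q0} is the leading identity just computed.

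The main obstacle is bookkeeping rather than conceptual: one must carefully justify interchanging the asymptotic expansion with the integral uniformly in $x_0$ and in the parameters, i.e. show that restricting to $|W|<\epsilon' p^{-\theta'/2}$ then extending the rescaled Gaussian integral to all of $\R^{2n}$ introduces only $O(p^{-\infty})$, and that the $\CC^m(X)$-derivatives (including $\nabla^{\pi^*\End(E)}$ in the $x_0$ direction) commute with all these operations with uniform constants. This is where I would lean most heavily on the already-cited machinery of \cite[Lemma~4.6]{MM08b} and \cite[Lemma~3.3]{ILMM17}: the statement here is essentially their result transported to the present normalization (chart $\phi_{x_0}$ identifying $\langle\cdot,\cdot\rangle$ with $g^{TX}_\om$, kernels with respect to $dv_X$ rather than $dv_{X,\om}$), so the proof amounts to checking that the change of reference measure and chart only affects the $\Q_{r,x_0}$ for $r\geq 1$ and the $\kappa^{-1/2}$ bookkeeping, leaving \cref{Q0} and the decay estimate intact.
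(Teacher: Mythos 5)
The paper does not actually prove \cref{Toepasy}: it is quoted as coming from \cite[Lemma~4.6]{MM08b} and \cite[Lemma~3.3]{ILMM17} (and later the existence of the expansion \cref{Toepasyexp} is attributed to \cite[(4.9)]{MM08b}), with only the remark that the present normalization (chart identifying $\langle\cdot,\cdot\rangle$ with $g^{TX}_\om$, kernels taken with respect to $dv_X$) differs from those references. Your composition argument --- inserting \cref{asy} and \cref{thetagal} into \cref{Toepker}, Taylor-expanding $F$, rescaling and using Gaussian calculus --- is precisely the mechanism behind the cited results, so your route is sound and consistent with what the paper relies on. One bookkeeping point deserves correction, though it does not break the argument: with the paper's normalization \cref{PPreal}, the kernel $\PP_{x_0}$ carries the prefactor $\det\left(\dot{R}^L_{x_0}/2\pi\right)$, so the identity you invoke reads
\begin{equation*}
\int_{\R^{2n}}\PP_{x_0}(Z,W)\,\PP_{x_0}(W,Z')\,dW=\det\left(\dot{R}^L_{x_0}/2\pi\right)\PP_{x_0}(Z,Z'),
\end{equation*}
not $\PP_{x_0}(Z,Z')$ on the nose; and your claim that the two interior factors $\kappa_{x_0}^{-1/2}(W)$ flatten the measure to $dW$ conflates $dv_X$ with $dv_{X,\om}$: by \cref{defkappa} they flatten $dv_{X,\om}$, while the integral in \cref{Toepker} is against $dv_X=\det\left(\dot{R}^L/2\pi\right)^{-1}dv_{X,\om}$ by \cref{dv=dvom}. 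The leftover factor $\det\left(\dot{R}^L/2\pi\right)^{-1}$, evaluated at $x_0$ to leading order, exactly cancels the extra determinant in the displayed identity, so \cref{Q0} comes out as $F_{x_0}$ with no spurious factor --- but you should track this explicitly (Taylor-expanding $\det\left(\dot{R}^L/2\pi\right)^{-1}$ along with $F$), since as written your leading-order computation only closes in the special case $g^{TX}=g^{TX}_\om$.
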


\subsection{Gaussian integrals}
\label{Gausssec}

We now recall some well-known facts about Gaussian integrals, which will be used for local computations in the next sections. For any $k\in\N^*$, let $\<\cdot,\cdot\>$ denote the canonical scalar product of $\R^k$. For any positive symmetric matrix $C$ acting on $\R^k$, we recall the following classical formula for the Gaussian integral,
\begin{equation}\label{gaussintgal}
\int_{\R^k} \exp(-\pi \<Z,CZ\>) dZ=\det{}^{-\frac{1}{2}} C.
\end{equation}

By analytic continuation, this formula is still valid when $C$ is a symmetric matrix with complex coefficients, providing the integral is well defined along a path in the space of symmetric matrices joining $C$ with a real positive symmetric matrix. Specifically, for $A$ positive symmetric matrix and $B$ real symmetric matrix, we will consider the path
\begin{equation}\label{continuation}
\begin{split}
\gamma:[0,1] & \rightarrow\GL_k(\C)\\
t & \mapsto A +t\sqrt{-1}B.
\end{split}
\end{equation}

Then \cref{gaussintgal} holds for $C=A+\sqrt{-1}B$, with the determination of the square root given by continuation along the image of \cref{continuation} by the application $\det^{-1}:GL_n(\C)\fl\C$. Henceforth, we will always use this determination of the square root of the determinant for $C=A+\sqrt{-1}B$ as above.


\section{Isotropic states}\label{seclagstate}
Through all this section, we use the context and notations of \cref{setting}. In particular, recall that $(X,\om)$ is a compact symplectic manifold of dimension $2n$, and that the curvature of $\nabla^L$ on $(L,h^L)$ over $X$ satisfies \cref{preq}.

\subsection{Bohr-Sommerfeld submanifolds}\label{BSsec}

An immersed submanifold $\iota:\Lambda\fl X$ is said to be \emph{isotropic} if $\iota^*\om=0$. If in addition $\dim\Lambda=n$, it is said to be \emph{Lagrangian}. Let $\nabla^{\iota^*L},\,h^{\iota^*L}$ be the connection and Hermitian metric induced by $\nabla^L,\,h^L$ on the pullback line bundle $\iota^*L$ over $\Lambda$. Note that by \cref{preq}, the condition $\iota^*\om=0$ implies that $\nabla^{\iota^*L}$ is \emph{flat}. This observation motivates the following definition.

\begin{defi}\label{BS}
A properly immersed oriented isotropic submanifold $\iota:\Lambda\fl X$ is said to satisfy the \emph{Bohr-Sommerfeld condition} if there exists a non-vanishing smooth section $\zeta\in\cinf(\Lambda,\iota^* L)$ satisfying
\begin{equation}\label{nabs=0}
\nabla^{\iota^*L} \zeta=0.
\end{equation}
Taking $\zeta$ satisfying further $|\zeta(x)|_{\iota^* L}=1$ for any $x\in\Lambda$, the data of $(\Lambda,\iota,\zeta)$ is called a \emph{Bohr-Sommerfeld submanifold} of $X$, or a \emph{Bohr-Sommerfeld Lagrangian} if in addition $\dim\Lambda=n$.
\end{defi}

Note that the properness hypothesis on $\iota$ implies that
$\Lambda$ is compact. Furthermore, this definition depends only on the symplectic structure on $(X,\om)$ and the prequantization condition \cref{preq} on $(L,h^L,\nabla^L)$. As $\nabla^L$ is Hermitian, up to renormalisation we can always assume that $\zeta\in\cinf(\Lambda,\iota^* L)$ satisfying \cref{nabs=0} is such that $|\zeta(x)|_{\iota^*L}=1$ for any $x\in\Lambda$. Finally, from the compactness of $X$, the properness hypothesis on $\iota$ is equivalent to the compactness of $\Lambda$.

\begin{rem}\label{hol}
As noted above, if $\iota:\Lambda\fl X$ is isotropic, then $\nabla^{\iota^*L}$ is flat over $\Lambda$, hence determined by its holonomy $\hol_{\iota^*L}:\pi_1(\Lambda)\fl S^1\subset\C$. We can then reformulate \cref{nabs=0} by saying that $\iota:\Lambda\fl X$ satisfies the Bohr-Sommerfeld condition if and only if $\hol_{\iota^*L}=\{1\}$. Now if the order of $\hol_{\iota^*L}$ is finite, then there exists a finite covering $j:\hat{\Lambda}\fl\Lambda$ such that $\hol_{j^*\iota^*L}=\{1\}$, so that $\iota\circ j:\hat{\Lambda}\fl X$ satisfies the Bohr-Sommerfeld condition. In particular, if there is $k\in\N$ such that $\iota:\Lambda\fl X$ satisfies the Bohr-Sommerfeld condition for $L^k$ instead of $L$, then the order of $\hol_{\iota^*L}$ divides $k$, thus is finite. Such a $\iota:\Lambda\fl X$ is called a \emph{Bohr-Sommerfeld submanifold of order $k$}, and up to finite covering, \cref{BS} also accounts for these. In the same line of thought, if $\iota:\Lambda\fl X$ is not orientable, we can always work on the orientation double cover of $\Lambda$.
\end{rem}

Let us now set some notations. We write $\iota^L,~\iota^E$ and $\iota_p$ for the natural maps covering $\iota:\Lambda\fl X$ on the respective total spaces of $L,~E$ and $E_p$ for any $p\in\N^*$. If $\zeta$ is any section of $\iota^*L$, we write $\zeta^p$ for the $p$-th power of $\zeta$ defined as a section of $\iota^*L^p$. If additionally $f$ is a section of $\iota^*E$, we write $\zeta^pf$ for the induced tensor product in $\iota^*E_p$.

From now on, we fix an almost complex structure $J$ on $TX$ compatible with $\om$, an auxiliary Hermitian vector bundle $(E,h^E)$ with Hermitian connection $\nabla^E$ and a $J$-invariant Riemannian
metric $g^{TX}$ on $TX$. We write $g^{T\Lambda}:=\iota^* g^{TX}$ for
the induced Riemannian metric on $T\Lambda$, and $dv_\Lambda$ for
the Riemannian volume form of $(\Lambda,g^{T\Lambda})$.
Recall that $\Lambda$ is compact by hypothesis.

\begin{defi}\label{Lagstate} The \emph{isotropic state} associated with $(\Lambda,\iota,\zeta)$ and $f\in\cinf(\Lambda,\iota^*E)$ is the family of sections $\{s_{f,p}\in\HH_p\}_{p\in\N^*}$ defined for any $x\in X$ by the formula
\begin{equation}\label{defLagstate}
s_{f,p}(x)=\int_{\Lambda} P_p(x,\iota(y))\iota_p.\zeta^pf(y)dv_{\Lambda}(y).
\end{equation}
\end{defi}

As $\iota$ is locally an embedding, when working locally we will often omit the mention of $\iota$, considering locally $\Lambda$ as a submanifold of $X$. With this convention, equation \cref{defLagstate} becomes
\begin{equation}\label{defLagstate1}
s_{f,p}(x)=\int_{\Lambda} P_p(x,y)\zeta^pf(y)dv_{\Lambda}(y).
\end{equation}
We list the basic properties of isotropic states in the following proposition, which holds for any $p\in\N^*$.

\begin{prop}\label{proprepgal}
For any $f_1, f_2\in\cinf(\Lambda,\iota^*E)$, we have the following additivity property,
\begin{equation}\label{add}
s_{f_1+f_2,p}=s_{f_1,p}+s_{f_2,p}.
\end{equation}
For any $s\in\HH_p$, we have the following reproducing property,
\begin{equation}\label{rep}
\<s,s_{f,p}\>_p=\int_{\Lambda} \<s(\iota(x)),\iota_p.\zeta^pf(x)\>_{E_p}\,dv_{\Lambda}(x).
\end{equation}
For any $f\in\cinf(\Lambda,\iota^*E)$ and any $F\in\cinf(X,\End(E))$, the action of $T_{F,p}$ on $s_{f,p}$ is given for any $x\in X$ by the formula
\begin{equation}\label{Toeplag}
T_{F,p}s_{f,p}=\int_{\Lambda} T_{F,p}(x,\iota(y))\iota_p.\zeta^pf(y)dv_{\Lambda}(y).
\end{equation}
\end{prop}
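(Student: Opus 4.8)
The plan is to prove the three assertions of \cref{proprepgal} by unwinding the definitions, using only the elementary properties of the orthogonal projection $P_p$ and the Toeplitz kernels collected in \cref{setting}. None of these should require the near-diagonal asymptotics; they are purely formal consequences of \cref{defLagstate}, \cref{ker} and \cref{Toepker}.

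For the additivity \cref{add}, I would simply observe that the integrand in \cref{defLagstate} is $\C$-linear in $f\in\cinf(\Lambda,\iota^*E)$: indeed $P_p(x,\iota(y))$ acts linearly on the fibre $(E_p)_{\iota(y)}\cong (L^p)_{\iota(y)}\otimes E_{\iota(y)}$, and $\zeta^p(f_1+f_2)(y)=\zeta^pf_1(y)+\zeta^pf_2(y)$ since $\zeta^p$ is a fixed section of $\iota^*L^p$; integrating the resulting identity over $\Lambda$ against $dv_\Lambda$ gives \cref{add}.

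For the reproducing property \cref{rep}, the key point is the self-adjointness of $P_p$ with respect to $\<\cdot,\cdot\>_p$ together with $P_p s=s$ for $s\in\HH_p$. Concretely, for $s\in\HH_p$ one writes, using \cref{defLagstate} and Fubini on the compact manifolds $X$ and $\Lambda$,
\begin{equation}
\<s,s_{f,p}\>_p=\int_X \Big\langle s(x),\int_\Lambda P_p(x,\iota(y))\iota_p.\zeta^pf(y)\,dv_\Lambda(y)\Big\rangle_{E_p}dv_X(x)=\int_\Lambda\Big\langle \int_X P_p(\iota(y),x)^* s(x)\,dv_X(x)\,,\,\iota_p.\zeta^pf(y)\Big\rangle_{E_p}dv_\Lambda(y),
\end{equation}
where I use that the Schwartz kernel of the self-adjoint projection satisfies $P_p(x,y)^*=P_p(y,x)$ as an element of $E_p\boxtimes E_p^*$, so that $\int_X P_p(\iota(y),x)s(x)\,dv_X(x)=(P_ps)(\iota(y))=s(\iota(y))$. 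Substituting this back yields exactly \cref{rep}. Here one should be a little careful to state the kernel adjointness identity $P_p(x,y)=P_p(y,x)^*$ correctly as a consequence of $P_p$ being an orthogonal (hence self-adjoint, idempotent) projection, which is where the $E_p^*$-factor in $E_p\boxtimes E_p^*$ is used.

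For \cref{Toeplag}, one applies the operator $T_{F,p}=P_pFP_p$ under the integral sign in \cref{defLagstate}. Since $T_{F,p}$ is a bounded operator on $L^2(X,E_p)$ with continuous kernel $T_{F,p}(x,y)$ given by \cref{Toepker}, and since $y\mapsto P_p(\cdot,\iota(y))\iota_p.\zeta^pf(y)$ is a smooth ($\Lambda$-parametrized, hence, by compactness of $\Lambda$, uniformly bounded) family in $\HH_p\subset L^2(X,E_p)$, one may commute $T_{F,p}$ with $\int_\Lambda(\cdot)\,dv_\Lambda(y)$; then $T_{F,p}$ acting on the section $x\mapsto P_p(x,\iota(y))$ of $E_p$ (for fixed $y$) has value $\int_X T_{F,p}(x,w)P_p(w,\iota(y))\,dv_X(w)$. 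It remains to identify this with $T_{F,p}(x,\iota(y))$, which follows from $T_{F,p}P_p=T_{F,p}$ (because $P_p$ is idempotent and $T_{F,p}=P_pFP_p$), read at the level of Schwartz kernels via \cref{Toepker} and \cref{ker}. I expect the only mild subtlety — the ``main obstacle,'' such as it is — to be the justification of interchanging $T_{F,p}$ (or the integration against $P_p(x,\cdot)^*$ in \cref{rep}) with the integral over $\Lambda$; this is routine given that everything is smooth and $X$, $\Lambda$ are compact, so the family is bounded in $L^2$ and the operators are bounded, but it is worth spelling out for cleanliness.
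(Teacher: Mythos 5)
Your proposal is correct and follows essentially the same route as the paper: additivity by linearity of the integrand, the reproducing property via self-adjointness of $P_p$ together with $P_p s=s$ on $\HH_p$ and Fubini, and \cref{Toeplag} by the same kernel manipulation (the paper writes $T_{F,p}s_{f,p}=P_pFs_{f,p}$ and expands, which amounts to your use of $T_{F,p}P_p=T_{F,p}$). The only blemish is a notational slip in your displayed computation for \cref{rep}, where the kernel acting on $s(x)$ should be $P_p(x,\iota(y))^*=P_p(\iota(y),x)$ rather than $P_p(\iota(y),x)^*$, exactly as your subsequent explanation in fact uses.
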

\begin{proof}
First, the additivity property \cref{add} is obvious from \cref{defLagstate}. Next, recall that $P_p$ is self-adjoint with respect to $\<\cdot,\cdot\>_p$ for any $p\in\N^*$, and restricts to the identity of $\HH_p$. Then using \cref{ker}, \cref{defLagstate1} and Fubini, we compute for any $s\in\HH_p$,
\begin{equation}\label{comprep}
\begin{split}
\<s,s_{f,p}\>_p & =\int_X\left<s(y),\int_{\Lambda} P_p(y,\iota(x))\iota_p.\zeta^pf(x)dv_{\Lambda}(x)\right>_{E_p} dv_X(y)\\
& =\int_{\Lambda} \left<\int_X P_p(\iota(x),y)s(y)dv_X(y),\iota_p.\zeta^pf(x)\right>_{E_p} dv_{\Lambda}(x)\\
& =\int_{\Lambda} \<s(\iota(x)),\iota_p.\zeta^pf(x)\>_{E_p}\ dv_{\Lambda}(x).
\end{split}
\end{equation}

The reproducing property \cref{rep} follows from \cref{comprep}. Finally, from \cref{Toep}, we get for any $f\in\cinf(\Lambda,\iota^*E)$ and $F\in\cinf(X,\End(E))$ that $T_{F,p}s_{f,p}=P_pFs_{f,p}$. Then by \cref{Toepker}, \cref{defLagstate} and using Fubini, we get for any  $x\in X$,
\begin{equation}\label{compToeplag}
\begin{split}
(T_{F,p}s_{f,p})(x) & =\int_X\int_\Lambda P_p(x,w)F(w)P_p(w,\iota(y))\iota_p.\zeta^pf(y)dv_\Lambda(y)dv_X(w)\\
& =\int_\Lambda T_{F,p}(w,\iota(y))\iota_p.\zeta^pf(y)dv_\Lambda(y).
\end{split}
\end{equation}

From \cref{compToeplag}, we get \cref{Toeplag}.
\end{proof}

\subsection{Asymptotic expansion of isotropic states}\label{secasylagstate}

In this section, we establish the first semi-classical properties of isotropic states. In particular, we show that the $L^2$-norm of an isotropic state admits an asymptotic expansion as $p\fl+\infty$,
and we compute the highest order term.

For any $p\in\N^*$, we write $|\cdot|_{E_p}$ for the norm on $E_p$ induced by $h^L$ and $h^E$. We show in the following proposition how an isotropic state concentrates around the image of the associated isotropic submanifold as $p\fl+\infty$. 

\begin{prop}\label{uopinf}
Let $f\in\cinf(\Lambda,\iota^*E)$. For any closed subset $K\subset X$ such that $K\cap\iota(\Lambda)=\0$ and for any $k\in\N$, there exists $C_k>0$ such that for any $x\in K$ and all $p\in\N^*$,
\begin{equation}
|s_{f,p}(x)|_{E_p}<C_k p^{-k}.
\end{equation}
\end{prop}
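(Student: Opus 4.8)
The plan is to split the defining integral \cref{defLagstate1} into a piece over a shrinking tubular neighbourhood of the point $x\in K$ inside $\Lambda$ and a piece away from it, and to exploit the off-diagonal decay of the Bergman kernel in both regions. Since $K$ is closed, $\iota(\Lambda)$ is compact, and $K\cap\iota(\Lambda)=\0$, there is $\epsilon>0$ such that $d^X(x,\iota(y))\geq\epsilon$ for every $x\in K$ and every $y\in\Lambda$. Thus for $p$ large the condition $d^X(x,\iota(y))>\epsilon p^{-\theta/2}$ holds for all $y\in\Lambda$, with any fixed $\theta\in\ ]0,1[$.

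First I would apply \cref{theta} directly: for any $m,k\in\N$, $\epsilon>0$ and $\theta\in\ ]0,1[$ there is $C_{m,k,\theta,\epsilon}>0$ with $|P_p(x,x')|_{\CC^0}\leq C_{m,k,\theta,\epsilon}\,p^{-k}$ whenever $d^X(x,x')>\epsilon p^{-\theta/2}$. Taking $m=0$ and using the uniform bound above, we get $|P_p(x,\iota(y))|_{E_p\boxtimes E_p^*}\leq C_{k}\,p^{-k}$ for all $x\in K$, $y\in\Lambda$ and all $p\in\N^*$ (absorbing the finitely many small $p$ into the constant). Then, since $|\zeta(y)|_{\iota^*L}=1$ and $f$ is smooth on the compact $\Lambda$, so that $|\iota_p.\zeta^pf(y)|_{E_p}=|f(y)|_{\iota^*E}\leq\|f\|_{\infty}$ uniformly in $p$, I bound
\begin{equation*}
|s_{f,p}(x)|_{E_p}\leq\int_{\Lambda}|P_p(x,\iota(y))|_{E_p\boxtimes E_p^*}\,|\iota_p.\zeta^pf(y)|_{E_p}\,dv_{\Lambda}(y)\leq C_{k}\,p^{-k}\,\|f\|_{\infty}\,\Vol(\Lambda),
\end{equation*}
which is the desired estimate after renaming the constant.

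There is essentially no obstacle here: the statement is a soft consequence of the uniform off-diagonal decay of $P_p(\cdot,\cdot)$ already recorded in \cref{theta}, together with the compactness of $\Lambda$ and the normalization $|\zeta|_{\iota^*L}\equiv1$. The only point requiring a word of care is the passage from the pointwise kernel bound to the bound on the norm of the section, i.e.\ estimating $|\int_{\Lambda}P_p(x,\iota(y))\iota_p.\zeta^pf(y)\,dv_{\Lambda}(y)|_{E_p}$ by $\int_{\Lambda}|P_p(x,\iota(y))|\,|\iota_p.\zeta^pf(y)|\,dv_{\Lambda}(y)$; this is just the triangle inequality for the $E_p$-valued integral together with the compatibility of $|\cdot|_{E_p\boxtimes E_p^*}$ with the contraction $E_p\boxtimes E_p^*\otimes E_p\to E_p$, both of which are standard. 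Hence the proof is short and the main content is the reduction to \cref{theta}.
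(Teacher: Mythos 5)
Your argument is correct and is exactly the route the paper takes: the paper's proof is simply the observation that the claim follows directly from \cref{theta} and the defining formula \cref{defLagstate}, using the positive distance between the closed set $K$ and the compact image $\iota(\Lambda)$. Your write-up just fills in the same details (uniform lower bound on $d^X(x,\iota(y))$, the normalization $|\zeta|_{\iota^*L}\equiv1$, and the triangle inequality for the vector-valued integral), so there is nothing to add.
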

\begin{proof}
This is a direct consequence of \cref{theta} and formula \cref{defLagstate}.
\end{proof}
Recall that for any $p\in\N^*$, we write $\|\cdot\|_p$ for the norm on
$\cinf(X,E_p)$ induced by $\<\cdot,\cdot\>_p$, and we write
$|\cdot|_{\iota^*E}$ for the norm on $\iota^*E$ over $\Lambda$
induced by $h^E$. The rest of the section is dedicated to the proof
of the following theorem.

\begin{theorem}\label{theonorme}
Let $f\in\cinf(\Lambda,\iota^*E)$. Then there exist $a_r\in\R,~r\in\N$, such that for any $k\in\N$ and as $p\fl+\infty$,
\begin{equation}\label{norme}
\norm{s_{f,p}}^2_p=p^{n-\frac{\dim\Lambda}{2}}\sum_{r=0}^k p^{-r} a_r+O(p^{n-\frac{\dim\Lambda}{2}-(k+1)}),
\end{equation}
with first coefficient $a_0\in\R$ given by the formula
\begin{equation}\label{b0norme}
a_0=2^{\frac{\dim\Lambda}{2}}\int_{\Lambda}|f|_{\iota^*E}^2
\det\left(\dot{R}^L_{x_0}/2\pi\right)\frac{dv_{\Lambda}}
{dv_{\Lambda,\om}}dv_{\Lambda}.
\end{equation}
Additionally, for any $F\in\cinf(X,\End(E))$, the product $\<T_{F,p}s_{f,p},s_{f,p}\>_p$ satisfies the expansion of \cref{norme} with $a_r\in\C,~r\in\N$, and
\begin{equation}\label{Toepb0norme}
a_0=2^{\frac{\dim\Lambda}{2}}\int_{\Lambda}\<Ff,f\>_{\iota^*E}\det\left(\dot{R}^L_{x_0}/2\pi\right)\frac{dv_{\Lambda}}{dv_{\Lambda,\om}}dv_{\Lambda}.
\end{equation}
\end{theorem}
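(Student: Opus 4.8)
\textbf{Proof proposal for \cref{theonorme}.}

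The plan is to compute $\norm{s_{f,p}}_p^2 = \<s_{f,p},s_{f,p}\>_p$ by applying the reproducing property \cref{rep} with $s = s_{f,p}\in\HH_p$, which turns the squared norm into the double integral
\[
\norm{s_{f,p}}_p^2 = \int_\Lambda\int_\Lambda \<P_p(\iota(x),\iota(y))\iota_p.\zeta^pf(y),\iota_p.\zeta^pf(x)\>_{E_p}\,dv_\Lambda(y)\,dv_\Lambda(x).
\]
By \cref{theta} (in the form of \cref{thetagal}), the contribution from pairs $(x,y)$ with $d^X(\iota(x),\iota(y))$ bounded below by a fixed multiple of $p^{-\theta/2}$ is $O(p^{-\infty})$; hence, after a partition of unity on $\Lambda$, the integral localizes to a neighbourhood of the diagonal in $\Lambda\times\Lambda$. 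For each $x_0$ in a finite cover of $\Lambda$ I would work in the normal coordinate chart $\phi_{x_0}$ of \cref{defphi}, choosing it compatibly with the submanifold so that $\Lambda$ corresponds near $x_0$ to the coordinate subspace given by the first $d$ coordinates (this is possible: $\Lambda$ being isotropic, its tangent space at $x_0$ is an isotropic subspace of $(T_{x_0}X,\om)$, so we may arrange $T_{x_0}\Lambda$ to sit inside $\R^n\times\{0\}$ suitably; the key geometric input is $\iota^*\om=0$). Rescaling $Z = \sqrt{p}\,w$, $Z' = \sqrt{p}\,w'$ and using \cref{asy} to replace $P_{p,x_0}$ by its expansion $\sum_r p^{-r/2}J_{r,x_0}\PP_{x_0}(\sqrt p\,w,\sqrt p\,w')\kappa^{-1/2}\kappa^{-1/2}$, the Jacobian of the rescaling on $\Lambda\times\Lambda$ produces $p^{-d/2}$, the prefactor $p^n$ from $p^{-n}P_{p,x_0}$ contributes $p^n$, and the leading Gaussian factor from \cref{PPreal} restricted to $w,w'\in\R^d\times\{0\}$ is $\det(\dot R^L_{x_0}/2\pi)\exp(-\tfrac\pi2 p|w-w'|^2)$ — note the imaginary phase $\Om(Z,Z')$ vanishes because $\Lambda$ is isotropic, which is exactly why the $b_r$ are \emph{real}. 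Using also that $\nabla^{\iota^*L}\zeta=0$ makes the parallel-transport trivialization of $L$ along $\Lambda$ identify $\zeta^p$ with a constant of modulus one, the $L$-factors drop out and one is left with a Gaussian integral in the variable $w - w' \in \R^d$.

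Carrying out the Gaussian integration over the off-diagonal variable via \cref{gaussintgal} (with $k=d$, $C = \tfrac12\Id_{\R^d}$, giving $\det^{-1/2}(\tfrac12\Id) = 2^{d/2}$) and then integrating the remaining on-diagonal variable over $\Lambda$ produces the claimed expansion: the power $p^{n-d/2}$ appears as $p^n\cdot p^{-d/2}$, each extra half-power $p^{-r/2}$ from \cref{asy} either pairs with an odd polynomial (whose Gaussian integral vanishes by parity — this is why only integer powers $p^{-r}$ survive) or contributes an even correction, and the error term follows from the $Cp^{-(k+1)/2+\delta}$ bound in \cref{asy} together with the rapid decay of the Gaussian off the diagonal, which absorbs the $p^{\delta}$. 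For the leading coefficient, $J_{0,x_0}\equiv\Id$ by \cref{J0}, so
\[
b_0 = 2^{d/2}\int_\Lambda |f(x)|_{\iota^*E}^2\,\det\!\left(\dot R^L_{x_0}/2\pi\right)\,\frac{dv_\Lambda}{dv_{\Lambda,\om}}\,dv_\Lambda,
\]
where the ratio $dv_\Lambda/dv_{\Lambda,\om}$ and the extra $\det(\dot R^L/2\pi)$ come from reconciling the volume form $dv_\Lambda$ in \cref{defLagstate} with the measure $dZ$ of the model (via $\kappa_{x_0}$ and \cref{dv=dvom}); when $\dim\Lambda=n$ the identity \cref{dv=dvom} restricted to $\Lambda$ collapses this to $2^{n/2}\int_\Lambda|f|^2\,dv_{\Lambda,\om}$. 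The Berezin-Toeplitz variant is identical, using \cref{Toeplag} and \cref{Toepasy} in place of \cref{rep} and \cref{asy}, with $\Q_{0,x_0}\equiv F_{x_0}$ by \cref{Q0} replacing $\Id$, which accounts for the factor $\<Ff,f\>_{\iota^*E}$ and allows $b_0$ (and all $b_r$) to be complex.

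\textbf{Main obstacle.} The routine part is the Gaussian bookkeeping; the genuinely delicate point is setting up the family of charts $\phi_{x_0}$, $x_0\in\Lambda$, \emph{simultaneously} adapted to $\Lambda$ and satisfying the normalization of \cref{defphi} (differential at $0$ sending $\Om,\<\cdot,\cdot\>$ to $\om,g^{TX}_\om$), and then tracking precisely how $dv_\Lambda$, $dv_{\Lambda,\om}$, the Jacobian $\kappa_{x_0}$ and the model measure $dZ$ interact once one restricts from $\R^{2n}$ to the $d$-dimensional coordinate subspace modelling $\Lambda$ — here the Riemannian metric $g^{TX}$ on $\Lambda$ need not agree with $g^{TX}_\om|_\Lambda$, and the correction factor $dv_\Lambda/dv_{\Lambda,\om}$ must be extracted carefully, together with the fact that, along the diagonal, the quadratic form appearing in the exponent of \cref{PPreal} restricted to $T_{x_0}\Lambda$ is $\tfrac\pi2|\cdot|^2$ with respect to $g^{TX}_\om$, which is what ultimately produces the factor $2^{d/2}$ rather than $2^{n/2}$ in general. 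I would also need the uniformity in $x_0$ of all these estimates, but that is guaranteed by the uniform versions of \cref{theta}, \cref{asy} already stated.
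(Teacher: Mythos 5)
Your proposal is essentially the paper's own proof: both use the reproducing property \cref{rep} to reduce $\norm{s_{f,p}}_p^2$ to an integral over $\Lambda$ against the kernel, localize via \cref{theta}/\cref{thetagal}, work in a chart as in \cref{defphi} in which $\Lambda$ becomes a ball in an isotropic subspace $\Sigma$ with $\zeta^p$ trivialized to $1$ by parallel transport, apply the near-diagonal expansion \cref{asy} (resp.\ \cref{Toepasy} with \cref{Toeplag} for the Toeplitz case), and reduce by parity and the $d$-dimensional Gaussian $\int_\Sigma e^{-\frac{\pi}{2}|u|^2}du=2^{d/2}$ to the stated expansion and leading coefficient. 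The only difference is organizational (you keep the double integral over $\Lambda\times\Lambda$ and integrate out the off-diagonal variable, while the paper first computes the pointwise asymptotics of $s_{f,p}$ along $\Lambda$ and then integrates over $x_0\in\Lambda$), and your volume-form bookkeeping, including the $dv_\Lambda/dv_{\Lambda,\om}$ factor and the Lagrangian simplification via $dv_{\Lambda,\om}=\det^{1/2}(\dot R^L/2\pi)\,dv_\Lambda$, agrees with \cref{b0} and \cref{dvom}.
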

\begin{proof}
Note first that the reproducing property \cref{rep} gives
\begin{equation}\label{repnorme}
\norm{s_{f,p}}_p^2=\int_\Lambda\<s_{f,p}(\iota(x)),\zeta^pf(x)\>_{E_p}dv_\Lambda(x).
\end{equation}
Using \cref{repnorme}, we are reduced to evaluate $s_{f,p}$ on the image of $\iota:\Lambda\fl X$. Let then $x_0\in X$ be in the image of $\iota$. As $\iota:\Lambda\fl X$ is an immersion, there is an integer $m\in\N$ such that for any small enough connected neighbourhood $V$ of $x_0$ in $X$, there are $m$ disjoint connected open sets $U_1,\dots, U_m\subset\Lambda$ such that $\iota^{-1}(V)=\cup_{j=1}^mU_j$. Using \cref{theta}, we can then localize the problem in the following way
as $p\fl+\infty$,
\begin{equation}\label{loclagm}
\begin{split}
s_{f,p}(x_0) & =\int_{\Lambda} P_p(x_0,\iota(x))\zeta^pf(x)dv_{\Lambda}(x)\\
& =\sum_{j=1}^m \int_{U_j} P_p(x_0,\iota(x))\zeta^pf(x)dv_{\Lambda}(x)+O(p^{-\infty}).
\end{split}
\end{equation}
In view of \cref{norme,repnorme,loclagm}, we can assume that $f$ has compact support around $\cup_{j=1}^mU_j$. Using \cref{add,repnorme}, we are reduced further to the case where $f$ has compact support around one of the $U_j$ for some $j$. As $U:=U_j$ is embedded in $X$ through $\iota$, we can consider $U$ as a submanifold of $X$, and \cref{loclagm} translates to
\begin{equation}\label{loclag}
s_{f,p}(x_0)=\int_U P_p(x_0,x)\zeta^pf(x)dv_\Lambda(x)+O(p^{-\infty}).
\end{equation}
By definition of $U$ as a submanifold of $X$, we can
take $\phi_{x_0}:B^{\R^{2n}}(0,\epsilon)\fl V$ with
$\epsilon>0,~V\subset X$ as in \cref{defphi} such that $\phi_{x_0}$ identifies $U\subset V$ with $B^{\Sigma}(0,\epsilon)$, where $\Sigma$ is a vector subspace of $\R^{2n}$. Then $\Sigma$ is an isotropic subspace of $(\R^{2n},\Om)$. We identify $E$, $L$ over $B^{\R^{2n}}(0,\epsilon)$ with $E_{x_0}$, $L_{x_0}$ as in \cref{locmod}. In particular, we use the unitary vector $\zeta(x_0)$ to identify $L_{x_0}$ with $\C$ , where $\zeta\in\cinf(\Lambda,\iota^*L)$ is the section associated with $(\Lambda,\iota,\zeta)$ as in \cref{BS}. As $\zeta$ is parallel with respect to $\nabla^{\iota^*L}$ along $\Lambda$, it is identified with $1\in\C$ over $B^{\Sigma}(0,\epsilon)$ in this trivialization.
Let $du$ be the Lebesgue measure of $\Sigma$, and define the function $h\in\cinf(B^{\Sigma}(0,\epsilon),\R)$ for all $u\in B^{\Sigma}(0,\epsilon)$ by
\begin{equation}\label{volcoord}
dv_\Lambda(u)=h(u) du,~~\text{with}~~
h(0)=(dv_\Lambda/dv_{\Lambda,\om})(x_0).
\end{equation}
Using \cref{thetagal}, \cref{asy} and \cref{defkappa},
for any $\delta\in\ ]0,1[$, we get $\theta\in\ ]0,1[$ such that
as $p\fl+\infty$,
\begin{equation}\label{estlag}
\begin{split}
\<s_{f,p}&(x_0),\zeta^p f(x_0)\>_{E_p}\\
= &\int_{B^{\Sigma}(0,\epsilon p^{-\theta/2})} \<P_p(x_0,\phi_{x_0}(u))\zeta^pf(\phi_{x_0}(u)),\zeta^pf(x_0)\>_{E_p}dv_\Lambda(u)+O(p^{-\infty})\\
= &p^n \int_{B^{\Sigma}(0,\epsilon p^{-\theta/2})} \sum_{r=0}^k p^{-r/2}\<J_{r,x_0}\PP_{x_0}(0,\sqrt{p}u)f_{x_0}(u),f(x_0)\>_E\\
&\kappa_{x_0}^{-1/2}(u)\kappa_{x_0}^{-1/2}(0)dv_\Lambda(u)
+p^n\int_{B^{\Sigma}(0,\epsilon p^{-\theta/2})} O(p^{-\frac{k+1}{2}+\delta})dv_\Lambda(u)+O(p^{-\infty})\\
= &p^n \int_{B^{\Sigma}(0,\epsilon p^{-\theta/2})}
\det\left(\dot{R}^L_{x_0}/2\pi\right)^{\frac{1}{2}}
\sum_{r=0}^k p^{-r/2}\<J_{r,x_0}\PP_{x_0}(0,\sqrt{p}u)f_{x_0}(u),f(x_0)\>_E\\
&\kappa_{x_0}^{-1/2}(u)h(u)du
+p^np^{-\frac{\theta\dim\Lambda}{2}}O(p^{-\frac{k+1}{2}+\delta}).
\end{split}
\end{equation}

Let us write $g_{x_0}=h\kappa_{x_0}^{1/2}f_{x_0}\in\cinf(B^{\Sigma}(0,\epsilon),E_{x_0})$. Then from \cref{defkappa} and \cref{volcoord}, we get the following Taylor expansion in $u\in\R^n$ up to order $k\in\N$,
\begin{multline}\label{Taylorfk}
g_{x_0}(u)  =(h\kappa_{x_0}^{-1/2}f_{x_0})(0)+\sum_{1\leq|\alpha|\leq k}\frac{\partial^{|\alpha|} g_{x_0}}{\partial u^\alpha}\frac{u^\alpha}{\alpha!}+O(|u|^{k+1})\\
=f(x_0)(dv_\Lambda/dv_{\Lambda,\om})(x_0)
\det\left(\dot{R}^L_{x_0}/2\pi\right)^{\frac{1}{2}}
+\sum_{1\leq |\alpha|\leq k}p^{-\alpha/2}\frac{\partial^{|\alpha|} g_{x_0}}{\partial u^\alpha}\frac{(\sqrt{p}u)^\alpha}{\alpha!}\\
+p^{-\frac{k+1}{2}}O(|\sqrt{p}u|^{k+1}).
\end{multline}
On the other hand, recall from \cref{asy} that $J_{r,x_0}(0,\sqrt{p}u)\in\End(E_{x_0})$ is polynomial in $\sqrt{p}u$ of the same parity as $r\in\N$. Let $M_k$ be the supremum of the degree of $J_{r,x_0}$ for all $1\leq r\leq k$, and write $\delta'=\delta+(M_k+k+1+d)(1-\theta)/2$. We deduce from \cref{estlag} and \cref{Taylorfk} the existence of a sequence $\{G_r\}_{r\in\N}$ of polynomials in one variable of $\R^n$ of the same parity as $r$, with values in $\C$, and with
\begin{equation}\label{G0ZZ'}
G_0\equiv
|f(x_0)|_E^2\frac{dv_\Lambda}{dv_{\Lambda,\om}}(x_0)
\det\left(\dot{R}^L_{x_0}/2\pi\right),
\end{equation}
such that as $p\fl+\infty$,
\begin{equation}\label{finestu0}
\begin{split}
&\<s_{f,p}(x_0), \zeta^pf(x_0)\>_{E_p}\\
& =p^n \sum_{r=0}^k p^{-r/2}\int_{B^{\Sigma}(0,\epsilon p^{-\theta/2})} G_r(\sqrt{p}u)\PP_{x_0}(0,\sqrt{p}u)du+O(p^{n-
\frac{\dim\Lambda+k+1}{2}+\delta'})\\
& = p^{n-\frac{\dim\Lambda}{2}} \sum_{r=0}^k p^{-r/2} \int_{B^{\Sigma}(0,\epsilon p^{(1-\theta)/2})} G_r(u)\PP_{x_0}(0,u)du
+O(p^{n-\frac{\dim\Lambda+k+1}{2}+\delta'}).
\end{split}
\end{equation}
Recall from \cref{PPreal} that
\begin{equation}\label{Pox}
\PP_{x_0}(0,u)=\exp\left(-\frac{\pi}{2}|u|^2\right),
\end{equation}
so that as $1-\theta>0$, the integral of $\PP_{x_0}(0,u)$
over $\R^n\backslash B^{\Sigma}(0,\epsilon p^{(1-\theta)/2})$
with respect to $u$
decreases exponentially as $p\fl+\infty$, and we then deduce from \cref{finestu0} that
\begin{equation}\label{finestu}
\<s_{f,p}(x_0),\zeta^pf(x_0)\>_{E_p} = p^{n-\frac{d}{2}} \sum_{r=0}^k p^{-r/2} \int_{\Sigma} G_r(u)\PP_{x_0}(0,u)du+
O(p^{n-\frac{\dim\Lambda+k+1}{2}+\delta'}).
\end{equation}
As $G_r$ is of the same parity as $r$, we immediately deduce from \cref{Pox} that for any $m\in\N$,
\begin{equation}\label{odd}
\int_{\Sigma}G_{2m+1}(u)\PP_{x_0}(0,u)du=0.
\end{equation}
Finally,
we get from \cref{G0ZZ'} and \cref{Pox}
the following formula for the highest
order term of \cref{finestu},
\begin{multline}\label{b0}
\int_{\Sigma}G_0(u)\PP_{x_0}(0,u)du\\
 =|f(x_0)|_E^2(dv_\Lambda/dv_{\Lambda,\om})(x_0)\det\left(\dot{R}^L_{x_0}/2\pi\right)\int_{\Sigma}\exp(-\frac{\pi}{2}|u|^2)du\\
 =2^{\frac{\dim\Lambda}{2}}|f(x_0)|_E^2(dv_\Lambda/dv_{\Lambda,\om})(x_0)\det\left(\dot{R}^L_{x_0}/2\pi\right).
\end{multline}

Then recalling that all the estimates above are uniform in
$x_0\in\iota(\Lambda)$, and by \cref{repnorme,odd,dv=dvom},
it suffices to integrate \cref{finestu} and \cref{b0} over
$x_0\in\iota(\Lambda)$ with respect to $dv_{\Lambda}$ to get \cref{norme}
and \cref{b0norme}.

Using the property \cref{Toeplag},
the proof of the asymptotic expansion as $p\fl+\infty$ of
$\<T_{F,p}s_{f,p},s_{f,p}\>_p$ is completely analogous to the proof
of the asymptotic expansion of $\|s_p\|_p$,
simply replacing the polynomials $J_{r,x_0}$ of \cref{asy}
by the polynomials $\QQ{r}$ of \cref{Toepasy} in the computations
above. This achieves the proof of \cref{theonorme}.
\end{proof}

\section{Isotropic intersections}\label{Lagint}
Let us consider two Bohr-Sommerfeld submanifolds $(\Lambda_j,\iota_j,\zeta_j)$ together with $f_j\in\cinf(\Lambda_j,\iota^*_j E)$, for $j=1,2$, and set $d_j=\dim\Lambda_j$. In this section, we establish the existence of an asymptotic expansion as $p\fl+\infty$ of the Hermitian product $\<s_{f_1,p},s_{f_2,p}\>_p$ of the two associated isotropic states, and we compute the highest order term, which depends only on the geometry of the intersection. Note that the case $\{s_{f_1,p}\}_{p\in\N^*}=\{s_{f_2,p}\}_{p\in\N^*}$ is precisely the result of \cref{theonorme}.

We will need the following regularity assumption, that we will
use throughout the section.

\begin{defi}\label{cleandef}
We say that two proper immersions $\iota_j:\Lambda_j\fl X,~j=1,2$, are intersecting \emph{cleanly} if for any $x\in\iota_1(\Lambda_1)\cap\iota_2(\Lambda_2)$ and any $y_j\in \Lambda_j$ such that $\iota_1(y_1)=\iota_2(y_2)=x$, there exist neighbourhoods
$U_j\subset\Lambda_j$ of $y_j$ such that the intersection $\iota_1(U_1)\cap\iota_2(U_2)$ is a submanifold of $X$ satisfying $T_x \iota_1(U_1)\cap T_x\iota_2(U_2)=T_x(\iota_1(U_1)\cap\iota_2(U_2))$.
\end{defi}

We then define the \emph{intersection} of
two immersions $\iota_1:\Lambda_1\fl X$ and $\iota_2:\Lambda_2\fl X$ over $X$ as their fibred product $\Lambda_1\cap\Lambda_2$, which
comes with two immersion
$j_i:\Lambda_1\cap\Lambda_2\fl\Lambda_i,~i=1,2$, such that
$\iota_1\circ j_1=\iota_2\circ j_2$ and which are
universal for this property.
Under the assumption of \cref{cleandef} above, this fibred product
has a natural smooth structure. In fact, consider smooth
atlases $\UU_1,\,\UU_2$
of $\Lambda_1,\,\Lambda_2$ respectively, such that for any
$U_j\in\UU_j,\,j=1,2$, the immersion $\iota_j$
restricted to $U_j$ is an embedding satisfying the assumption of
\cref{cleandef} as soon as the intersection is non-empty. We can
then define an atlas of $\Lambda_1\cap\Lambda_2$ as the set of all
intersections $U_1\cap U_2$ for all $U_1\in\UU_1$, $U_2\in\UU_2$,
with transition maps induced by the ones of $\UU_1$ and $\UU_2$.

Note that this definition of intersection is local, and
reduces to the usual one in the case of embeddings.
For that reason, one can readily reduce to the usual definition
of a clean intersection when working locally.
A typical situation when this general definition is
needed is in the natural
case when $\iota_1:\Lambda_1\fl X$ and $\iota_2:\Lambda_2\fl X$ are
Bohr-Sommerfeld submanifolds of respective
order $k_1\in\N^*$ and $k_2\in\N^*$ in the sense \cref{hol}, with
$k_1$ and $k_2$ prime with each other.

\subsection{Asymptotic expansion of discrete intersections}\label{sectrans}
In this section, we deal with the case of discrete intersections. We consider first the easy case when the intersection is empty.

\begin{prop}\label{propnonint}
Suppose that \text{$\Lambda_1\cap\Lambda_2=\0$}, and let $F\in\cinf(X,\End(E))$. Then for any $k\in\N$, there exists $C_k>0$ such that for all $p\in\N^*$,
\begin{equation}\label{propnonint1}
|\<T_{F,p}s_{f_1,p},s_{f_2,p}\>_p|<C_kp^{-k}.
\end{equation}
\end{prop}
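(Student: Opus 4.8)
The statement is the ``easy'' case where the two immersed submanifolds do not meet at all, so the whole argument should reduce to the off-diagonal decay of the Bergman kernel combined with the Toeplitz off-diagonal decay already recorded in \cref{Toepasy}. First I would use the reproducing-type identity: combining \cref{Toeplag} with the Hermitian product and Fubini, exactly as in the computation \cref{comprep}, one gets
\begin{equation*}
\<T_{F,p}s_{f_1,p},s_{f_2,p}\>_p=\int_{\Lambda_2}\int_{\Lambda_1}\<T_{F,p}(\iota_2(x),\iota_1(y))\iota_{1,p}.\zeta_1^pf_1(y),\iota_{2,p}.\zeta_2^pf_2(x)\>_{E_p}\,dv_{\Lambda_1}(y)\,dv_{\Lambda_2}(x).
\end{equation*}
Thus the product is an integral over the compact manifold $\Lambda_1\times\Lambda_2$ of the pointwise quantity $T_{F,p}(\iota_2(x),\iota_1(y))$ contracted with fixed unit-norm sections and the bounded functions $f_1,f_2$.

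The key point is that, since $\Lambda_1\cap\Lambda_2=\emptyset$ in the sense of \cref{cleandef} (the fibred product is empty), the closed sets $\iota_1(\Lambda_1)$ and $\iota_2(\Lambda_2)$ are disjoint compact subsets of $X$; hence there is $\epsilon>0$ with $d^X(\iota_1(y),\iota_2(x))>\epsilon$ for all $(x,y)\in\Lambda_2\times\Lambda_1$. Then the first (off-diagonal) assertion of \cref{Toepasy}, applied with any $\theta\in\,]0,1[$ and the constant $\epsilon$ (so that trivially $\epsilon>\epsilon p^{-\theta/2}$), gives for every $k\in\N$ a constant $C_{k}>0$ with
\begin{equation*}
|T_{F,p}(\iota_2(x),\iota_1(y))|_{E_p\boxtimes E_p^*}\leq C_{k}p^{-k}
\end{equation*}
uniformly in $(x,y)$. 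Since $|\zeta_j|_{\iota_j^*L}=1$ and $f_1,f_2$ are bounded on the compact manifolds $\Lambda_j$, the integrand is bounded by a constant times $p^{-k}$, and integrating over the finite-volume manifold $\Lambda_1\times\Lambda_2$ yields $|\<T_{F,p}s_{f_1,p},s_{f_2,p}\>_p|<C_k'p^{-k}$, which is \cref{propnonint1}. Relabelling $k$ finishes the argument.

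There is essentially no obstacle here: the only minor point to be careful about is the passage from the immersed picture to the estimate \cref{Toepasy}, which is phrased in the charts $\phi_{x_0}$, but this is handled exactly as in \cref{theta}$\Rightarrow$\cref{thetagal} (or one simply covers the compact source by finitely many charts in which $\iota_j$ is an embedding, as done after \cref{loclagm} in the proof of \cref{theonorme}), together with the observation that a uniform lower bound on the \emph{extrinsic} distance $d^X(\iota_1(y),\iota_2(x))$ is all that is needed, and that such a bound exists by compactness and disjointness. One also notes the special case $F=\Id_E$ recovers the decay of $\<s_{f_1,p},s_{f_2,p}\>_p$ itself.
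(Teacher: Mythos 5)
Your proof is correct and follows essentially the same route as the paper: the reproducing property \cref{rep} together with \cref{Toeplag} reduces everything to the rapid off-diagonal decay of the kernel over the disjoint compact images $\iota_1(\Lambda_1)$ and $\iota_2(\Lambda_2)$. The paper merely packages your inner integral differently, citing the concentration statement \cref{uopinf} with $K=\iota_2(\Lambda_2)$ (implicitly in its Toeplitz version via \cref{Toepasy} and \cref{Toeplag}) instead of writing out the double integral.
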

\begin{proof}
Using the reproducing property \cref{rep}, we get for any $p\in\N^*$,
\begin{equation}\label{comppropnonint}
\<T_{F,p}s_{f_1,p},s_{f_2,p}\>_p=\int_\Lambda \<T_{F,p}s_{f_1,p}(\iota_2(x)),\zeta_2^pf_2(x)\>_{E_p} dv_{\Lambda_2}(x).
\end{equation}
In particular, as $\Lambda_2$ is compact by hypothesis, we can choose
$K=\iota_2(\Lambda_2)$ in \cref{uopinf}, and
we deduce \cref{propnonint1} from \cref{comppropnonint}.
\end{proof}

In view of \cref{propnonint}, we will assume from now on that $\Lambda_1\cap\Lambda_2$ is not empty. In the statement of the following theorem, the immersions $\iota_i:\Lambda_i\fl X$ and $j_i:\Lambda_1\cap\Lambda_2\fl\Lambda_i,~i=1,2,$ are implicit, and we omit to mention them for simplicity.
%

\begin{theorem}\label{theointer}
Suppose that $(\Lambda_1,\iota_1,\zeta_1)$ and $(\Lambda_2,\iota_2,\zeta_2)$ intersect cleanly, and that their intersection $\Lambda_1\cap\Lambda_2$ in the sense above is discrete. Set $m=\#\,\Lambda_1\cap\Lambda_2$ and write $\Lambda_1\cap\Lambda_2=\{x_1,\dots,x_m\}$. Then for any $F\in\cinf(X,\End(E))$, there exist $b_{q,r}\in\C,~r\in\N,~1\leq q\leq m$, such that for any $k\in\N$ and as $p\fl+\infty$,
\begin{equation}\label{u1u2}
\<T_{F,p}s_{f_1,p},s_{f_2,p}\>_p=p^{n-\frac{d_1+d_2}{2}}\sum_{q=1}^m\lambda^p_{q} \sum_{r=0}^k p^{-r} b_{q,r}+O(p^{n-\frac{d_1+d_2}{2}-(k+1)}),
\end{equation}
where $\lambda_{q}=\<\zeta_1(x_q),\zeta_2(x_q)\>_L$. Furthermore, if $\dim\Lambda_1=n$, we have
\begin{multline}\label{bmj0}
b_{q,0}=2^{n/2}\<F_{x_q}f_1(x_q),f_2(x_q)\>_{x_q}
\det\left(\dot{R}^L_{x_q}/2\pi\right)
\frac{dv_{\Lambda_1}}{dv_{\Lambda_1,\om}}
\frac{dv_{\Lambda_2}}{dv_{\Lambda_2,\om}}(x_q)\\
\det{}^{-\frac{1}{2}}\Big\{\sqrt{-1}\sum_{k=1}^nh^{TX}_\om(e_k,\nu_i)\om(e_k,\nu_j)\Big\}_{i,j=1}^{d_2},
\end{multline}
where $\<e_i\>_{i=1}^n,\<\nu_j\>_{j=1}^{d_2}$ are oriented orthonormal bases for $g^{TX}_\om$ of the tangent spaces of $\Lambda_1,\Lambda_2$ in $X$ at $x_q$, and the square root of the determinant is determined by \cref{continuation}.
\end{theorem}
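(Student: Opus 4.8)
The plan is to reduce the Hermitian product $\<T_{F,p}s_{f_1,p},s_{f_2,p}\>_p$ to a sum of local integrals near the intersection points, and then evaluate each by a Gaussian integral computation in the local model of \cref{locmod}. First I would apply the reproducing property \cref{rep} to write
\begin{equation*}
\<T_{F,p}s_{f_1,p},s_{f_2,p}\>_p=\int_{\Lambda_2}\<(T_{F,p}s_{f_1,p})(\iota_2(x)),\zeta_2^pf_2(x)\>_{E_p}\,dv_{\Lambda_2}(x),
\end{equation*}
and then insert the formula \cref{Toeplag} for $T_{F,p}s_{f_1,p}$ as an integral over $\Lambda_1$ of $T_{F,p}(\cdot,\iota_1(y))\iota_p.\zeta_1^pf_1(y)$. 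By the off-diagonal decay \cref{Toepasy}, the integrand is $O(p^{-\infty})$ unless $\iota_1(y)$ and $\iota_2(x)$ are within $\epsilon p^{-\theta/2}$ of each other; since the intersection is discrete and clean, this forces $(x,y)$ to lie in a $p^{-\theta/2}$-neighbourhood of one of the finitely many pairs $(y_1^{(q)},y_2^{(q)})$ with $\iota_1(y_1^{(q)})=\iota_2(y_2^{(q)})=x_q$ (using properness to rule out sequences escaping to infinity, exactly as in the localization step of \cref{loclagm}). So up to $O(p^{-\infty})$ we get a finite sum over $q$ of integrals localized near each $x_q$.

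Next, fix $q$ and work in a chart $\phi_{x_q}:B^{\R^{2n}}(0,\epsilon)\fl V$ as in \cref{defphi}, in which $\Lambda_1$ is identified with a ball $B^{\Sigma_1}(0,\epsilon)$ in an isotropic subspace $\Sigma_1\subset\R^{2n}$ and $\Lambda_2$ with $B^{\Sigma_2}(0,\epsilon)$, $\Sigma_2$ isotropic; discreteness of the intersection means $\Sigma_1\cap\Sigma_2=\{0\}$. Trivialize $L,E$ by radial parallel transport as in \cref{locmod}; the parallel sections $\zeta_1,\zeta_2$ become constants $\zeta_1(x_q),\zeta_2(x_q)\in\C$ (after the unit-frame identification), and their $p$-th powers produce the scalar $\lambda_q^p$ with $\lambda_q=\<\zeta_1(x_q),\zeta_2(x_q)\>_L$, factored out of the integral. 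Substituting the expansion \cref{Toepasyexp} for $T_{F,p,x_q}$, rescaling $u\mapsto\sqrt p\,u$ on $\Sigma_1$ and $v\mapsto\sqrt p\,v$ on $\Sigma_2$ (which produces the prefactor $p^{-d_1/2}p^{-d_2/2}$ relative to $p^n$, i.e. the overall $p^{n-(d_1+d_2)/2}$), and Taylor-expanding the volume densities and the sections $f_1,f_2$ about $0$ as in \cref{Taylorfk}, I obtain an asymptotic series whose $r$-th term is $p^{-r/2}$ times a Gaussian-type integral of a polynomial against $\PP_{x_q}$ over $\Sigma_1\times\Sigma_2$. The extended ranges $B^{\Sigma_i}(0,\epsilon p^{(1-\theta)/2})$ can be replaced by all of $\Sigma_i$ at the cost of $O(p^{-\infty})$ using the Gaussian decay of $\PP_{x_q}$, exactly as in \cref{Pox<}. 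A parity argument as in \cref{odd}: the half-integer-power terms involve odd polynomials and the relevant quadratic form is still even, so those integrals vanish, leaving only integer powers of $p$ — which gives the form \cref{u1u2}.

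For the leading coefficient $b_{q,0}$ when $\dim\Lambda_1=n$, I would use \cref{Q0}, namely $\Q_{0,x_q}\equiv F_{x_q}$, together with $J_{0}\equiv\Id$, so the $r=0$ term is
\begin{equation*}
\<F_{x_q}f_1(x_q),f_2(x_q)\>_{x_q}\,\big(\tfrac{dv_{\Lambda_1}}{dv_{\Lambda_1,\om}}\big)(x_q)\big(\tfrac{dv_{\Lambda_2}}{dv_{\Lambda_2,\om}}\big)(x_q)\det(\dot R^L_{x_q}/2\pi)\int_{\Sigma_1\times\Sigma_2}\exp\!\Big(-\tfrac{\pi}{2}|u-v|^2-\pi\sqrt{-1}\,\Om(u,v)\Big)\,du\,dv.
\end{equation*}
Since $\Lambda_1$ is Lagrangian, $\Sigma_1$ is a Lagrangian subspace; one can then choose coordinates so that $\Sigma_1=\R^n\times\{0\}$, integrate out the $u$-variable first (a Gaussian integral in $n$ variables centered appropriately, giving a factor $2^{n/2}$), and be left with a Gaussian integral over $\Sigma_2$ whose quadratic form is precisely $\pi$ times the complex symmetric form with matrix $\{\sqrt{-1}\sum_k h^{TX}_\om(e_k,\nu_i)\om(e_k,\nu_j)\}_{i,j}$ in an orthonormal basis $\<\nu_j\>$ of $\Sigma_2$, where $\<e_k\>$ is an orthonormal basis of $\Sigma_1$. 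Applying the complexified Gaussian formula \cref{gaussintgal} with the branch of $\det^{-1/2}$ fixed by the path \cref{continuation} produces the stated $\det^{-1/2}$ factor; the densities $\tfrac{dv_{\Lambda_1}}{dv_{\Lambda_1,\om}}(x_q)\det(\dot R^L_{x_q}/2\pi)$ collapse to $\det^{1/2}(\dot R^L_{x_q}/2\pi)$ by \cref{dvom}, yielding \cref{bmj0}. Throughout, all $O(\cdot)$ estimates are uniform because the error bounds in \cref{asy}, \cref{Toepasy} and \cref{thetagal} are uniform in $x_0$. The main obstacle I anticipate is bookkeeping the error exponent: tracking how the polynomial degrees $M_k$ from \cref{Toepasyexp}, the Taylor remainder orders, and the $(1-\theta)$-rescaling interact so that the total error is genuinely $O(p^{n-(d_1+d_2)/2-(k+1)})$, and verifying that the complex quadratic form appearing in the $v$-integral is nondegenerate — this is exactly where the clean-and-discrete-intersection hypothesis $\Sigma_1\cap\Sigma_2=\{0\}$ is used, since $\Sigma_1$ Lagrangian plus $\Sigma_2\cap\Sigma_1=\{0\}$ guarantees the real part of that form is positive definite so the Gaussian converges.
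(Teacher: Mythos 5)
Your proposal is correct and follows essentially the same route as the paper: reproducing property plus off-diagonal decay to localize at the finitely many intersection points, the near-diagonal kernel expansion in the chart \cref{defphi} with rescaling, Taylor expansion and the parity argument to get integer powers, and finally the complexified Gaussian integral (integrating out the Lagrangian variable first, with convergence from $\Sigma_1$ Lagrangian and $\Sigma_1\cap\Sigma_2=\{0\}$, and symmetry of the quadratic form coming from $\Sigma_2$ isotropic) evaluated via \cref{gaussintgal} and the branch \cref{continuation}. The only cosmetic difference is that you carry the Toeplitz kernel expansion \cref{Toepasyexp} throughout, whereas the paper treats $F=\Id_E$ via \cref{asy} and notes the general case is analogous.
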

\begin{proof}
We will prove \cref{theointer} for $F=\Id_E$ (so that $T_{F,p}=P_p$), the proof of the general case being totally analogous by \cref{Toepasy} and property \cref{Toeplag}. First, using the
reproducing property \cref{rep}, we get for any $p\in\N^*$,
\begin{equation}\label{compreptrans}
\<s_{f_1,p},s_{f_2,p}\>_p=\int_\Lambda \<s_{f_1,p}(\iota_2(x)),\zeta_2^pf_2(x)\>_{E_p} dv_{\Lambda_2}(x).
\end{equation}
We can then reproduce the argument in the proof of \cref{propnonint}, using \cref{uopinf} to reduce the proof to the case of $f_2$ with compact support in a given
neighbourhood of $\iota_2^{-1}(\iota_1(\Lambda_1)\cap\iota_2(\Lambda_2))$, which is a finite set by assumption. Symmetrically, using the reproducing property of $s_{f_1,p}$ instead of $s_{f_2,p}$, we can assume further that $f_1$ has compact support in a given neighbourhood of $\iota_1^{-1}(\iota_1(\Lambda_1)\cap\iota_2(\Lambda_2))$. By the additivity property \cref{add,u1u2},
we are further reduced to the case of $f_i$ with compact support in a neighborhood in a single point $y_i\in\Lambda_i$, for each $i=1,2$. Using \cref{uopinf}, we are finally reduced to the case $\iota_1(y_1)=\iota_2(y_2)$.
Set $x_0:=\iota_1(y_1)=\iota_2(y_2)\in X$.

Let $U_j\subset\Lambda_j$ be as in \cref{cleandef}, intersecting
cleanly at $x_0\in X$ only. In particular,
using the \cref{Lagstate} of an isotropic state,
equation \cref{compreptrans} becomes, as $p\fl+\infty$,
\begin{equation}\label{loclagm1}
\begin{split}
\<s_{1,p},s_{2,p}\>_p & =\int_{U_2} \<s_{f_1,p}(x),\zeta_2^pf_2(x)\>_{E_p} dv_{\Lambda_2}(x)+O(p^{-\infty})\\
& =\int_{U_2}\int_{U_1} \<P_p(x,y)\zeta_1^pf_1(y),\zeta_2^pf_2(x)\>_{E_p}dv_{\Lambda_1}(y)dv_{\Lambda_2}(x)+O(p^{-\infty}).
\end{split}
\end{equation}
By definition of $U_1$ as a submanifold of $X$, we can consider a
chart as in \cref{defphi} in which $U_1$ is identified with
$B^{\Sigma_1}(0,\epsilon)$ for some $\epsilon>0$, where $\Sigma_1$
is an isotropic space of $(\R^{2n},\Om)$. Now in this chart,
we can consider a projection $\pi_{1,2}:\R^{2n}\fl\Sigma_2$
preserving $\Sigma_1$, where $\Sigma_2$
it the tangent space to $U_2$ at $x_0$ in this chart, and use
it to identify $U_2$ with $B^{\Sigma_2}(0,\epsilon)$. In
that way, we can construct
$\phi_{x_0}:B^{\R^{2n}}(0,\epsilon)\fl V$ with
$\epsilon>0,~V\subset X$ as in \cref{defphi}, with
$V\subset X$ such that $V\cup \Lambda_j=U_j$,
such that $\varphi_{x_0}$ identifies $U_j$ with
$B^{\Sigma_j}(0,\epsilon)$ for any $j=1,2$,
where $\Sigma_1$ and $\Sigma_2$ are isotropic subspaces of
$(\R^{2n},\Om)$. As $U_1$ and $U_2$ intersect cleanly at $x_0$ only,
we have $\Sigma_1\cap\Sigma_2=\{0\}$. We identify $E$, $L$ over $B^{\R^{2n}}(0,\epsilon)$ with $E_{x_0}$, $L_{x_0}$ as in \cref{locmod}, and use the unitary vector $\zeta_1(x_0)$ to identify $L_{x_0}$ with $\C$. Then $\zeta_1$ is identified with $1\in\C$ over $B^{\Sigma_1}(0,\epsilon)$ in this trivialization. As $\zeta_2$ is parallel with respect to $\nabla^{\iota_2^*L}$ over $U_2$, it is identified with $\bar{\lambda}\in\C$ over $B^{\Sigma_2}(0,\epsilon)$, where $\lambda=\<\zeta_1(x_0),\zeta_2(x_0)\>_L$.

Then  as $p\fl+\infty$, equation \cref{loclagm1} becomes
\begin{multline}\label{loclag1}
\<s_{1,p},s_{2,p}\>_p=\lambda^p\int_{B^{\Sigma_2}(0,\epsilon)}\int_{B^{\Sigma_1}(0,\epsilon)}\<P_p(\phi_{x_0}(Z),\phi_{x_0}(Z'))f_{1,x_0}(Z'),f_{2,x_0}(Z)\>_E\\
dv_{\Lambda_1}(Z')dv_{\Lambda_2}(Z)+O(p^{-\infty}).
\end{multline}
Let $du$ and $dw$ be the Lebesgue measures of $\Sigma_1$ and $\Sigma_2$ respectively. For any $j=1,2$, define the functions $h_j\in\cinf(B^{\Sigma_j}(0,\epsilon),\R)$ in the chart \cref{defphi} for any $u\in B^{\Sigma_1}(0,\epsilon),~w\in B^{\Sigma_2}(0,\epsilon)$ by
\begin{equation}\label{volcoord12}
dv_{\Lambda_1}(u)=h_1(u)du\quad\text{and}\quad dv_{\Lambda_2}(w)=h_2(w)dw,
\end{equation}
with $h_j(0)=(dv_{\Lambda_j}/dv_{\Lambda_j,\om})(x_0)$ for $j=1,2$. Recalling \cref{gcnt} and the fact that $|\lambda^p|=1$ for all $p\in\N^*$, we can use \cref{thetagal} and \cref{asy}, to get $\theta\in\ ]0,1[$ for any $k\in\N,~\delta\in\ ]0,1[$, such that
as $p\fl+\infty$, equation \cref{loclag1} becomes
\begin{equation}\label{loclag2}
\begin{split}
\<s_{1,p}, & s_{2,p}\>_p =\lambda^p\int_{B^{\Sigma_2}(0,\epsilon p^{-\theta/2})}\int_{B^{\Sigma_1}(0,\epsilon p^{-\theta/2})}\\
& \<P_p(\phi_{x_0}(Z),\phi_{x_0}(Z'))f_{1,x_0}(Z'),f_{2,x_0}(Z)\>_E dv_{\Lambda_1}(Z')dv_{\Lambda_2}(Z)+O(p^{-\infty})\\
= & \lambda^p p^n\sum_{r=0}^k  p^{-r/2}\int_{B^{\Sigma_2}(0,\epsilon p^{-\theta/2})}\int_{B^{\Sigma_1}(0,\epsilon p^{-\theta/2})}\\
& \quad\quad\quad\quad\<J_{r,x_0}\PP_{x_0}(\sqrt{p}Z,\sqrt{p}Z')f_{1,x_0}(Z'),f_{2,x_0}(Z)\>_E\\
& \quad\quad\quad\quad\quad\quad\quad\quad\quad\quad\kappa_{x_0}^{-1/2}(Z')\kappa_{x_0}^{-1/2}(Z)dv_{\Lambda_1}(Z')dv_{\Lambda_2}(Z)\\
&+p^n\int_{B^{\Sigma_2}(0,\epsilon p^{-\theta/2})}\int_{B^{\Sigma_2}(0,\epsilon p^{-\theta/2})}O(p^{-\frac{k+1}{2}+\delta})dv_{\Lambda_1}(Z')dv_{\Lambda_2}(Z)+O(p^{-\infty})\\
= & \lambda^p p^n\sum_{r=0}^k p^{-r/2}\int_{B^{\Sigma_2}(0,\epsilon p^{-\theta/2})}\int_{B^{\Sigma_1}(0,\epsilon p^{-\theta/2})}\\
& \quad\quad\quad\<J_{r,x_0}\PP_{x_0}(\sqrt{p}w,\sqrt{p}u)f_1(u),f_2(w)\>_E\\
& \quad\quad\quad\quad\quad\kappa_{x_0}^{-1/2}(u)\kappa_{x_0}^{-1/2}(w)h_1(u)h_2(w)dudw+p^np^{-\frac{(d_1+d_2)\theta}{2}}O(p^{-\frac{k+1}{2}+\delta}).
\end{split}
\end{equation}
Consider now the Taylor expansion up to order $k\in\N$ of $g_j=h_j\kappa_{x_0}^{-1/2}f_{j,x_0}\in\cinf(B^{\Sigma_j}(0,\epsilon),\C)$ for $j=1,2$ as in \cref{Taylorfk}. By \cref{asy} and formula
\cref{defkappa}, following the proof of \cref{theonorme},
we get $\delta'>0$ and a sequence $\{G_r\}_{r\in\N}$ of polynomials in two variables of $\R^{2n}$ with values in $\C$, of the same parity as $r$ with
\begin{equation}\label{F0}
G_0\equiv\<f_1(x_0),f_2(x_0)\>_E\frac{dv_{\Lambda_1}}{dv_{\Lambda_1,\om}}\frac{dv_{\Lambda_2}}{dv_{\Lambda_2,\om}}(x_0)
\det\left(\dot{R}^L_{x_0}/2\pi\right),
\end{equation}
such that as $p\fl+\infty$, equation \cref{loclag2} becomes
\begin{multline}\label{loclag3}
\<s_{1,p},s_{2,p}\>_p=\lambda^p p^{n-\frac{d_1+d_2}{2}} \sum_{r=1}^k p^{-r/2}\int_{B^{\Sigma_2}(0,\epsilon p^{(1-\theta)/2})}\int_{B^{\Sigma_1}(0,\epsilon p^{(1-\theta)/2})}\\
G_r\PP_{x_0}(w,u)du dw+O(p^{n-\frac{d_1+d_2+k+1}{2}+\delta'}).
\end{multline}
As $\Sigma_1\cap\Sigma_2=\{0\}$, we get from \cref{PPreal} that
\begin{equation}\label{PP<}
\left|\PP_{x_0}(w,u)\right|\leq \exp\big(C(|u|+|w|)\big),
\end{equation}
for some $C>0$ and all $w\in\Sigma_1$ and $u\in\Sigma_2$.
In particular, as $1-\theta>0$, its integral
in $u\in\Sigma_1\backslash B^{\Sigma_1}(0,\epsilon p^{(1-\theta)/2})$ and in
$w\in\Sigma_2\backslash B^{\Sigma_2}(0,\epsilon p^{(1-\theta)/2})$ decrease exponentially and uniformly as $p\fl+\infty$.
Equation \cref{loclag3} then becomes
\begin{equation}\label{localag4}
\<s_{1,p},s_{2,p}\>_p=\lambda^p \sum_{r=1}^k p^{-r/2}\int_{\Sigma_2}\int_{\Sigma_1}G_r\PP_{x_0}(w,u)du dw+O(p^{-\frac{k+1}{2}+\delta'}).
\end{equation}
Let us now evaluate the integrals in \cref{localag4}. Up to linear symplectic transformation, the canonical symplectic basis $\{e_j,f_j\}_{j=1}^n$ of $(\R^{2n},\Om)$ can be chosen such that $\Sigma_1=\<e_1,\dots, e_{d_1}\>$ as an oriented isotropic subspace. Let $\nu_1,\dots, \nu_{d_2}\in\Sigma_2$ form an oriented orthonormal basis of $\Sigma_2$ for the metric induced by $\<\cdot,\cdot\>$. Consider the matrices $A$ and $B$ given by
\begin{equation}\label{ABdef}
\begin{split}
A=(a_i^j)_{1\leq i\leq n,1\leq j\leq d_2}\quad & \text{with}\quad a_i^j=\Om(e_i,\nu_j),\\
B=(b_i^j)_{1\leq i\leq n,1\leq j\leq d_2}\quad & \text{with}\quad b_i^j=\<e_i,\nu_j\>.
\end{split}
\end{equation}
As $\Om(e_i,\nu_j)=\<f_i,\nu_j\>$ for all $1\leq i\leq n,\,
1\leq j\leq d_2$, we know that for any $1\leq j\leq d_2$,
\begin{equation}\label{viejfj}
\nu_j=\sum_{i=1}^n b_i^je_i+\sum_{i=1}^n a_i^jf_i.
\end{equation}
Let us write $dt:=dt_1\dots dt_{d_2}$ for the Lebesgue measure of $\R^{d_2}$, and let $\varphi$ be any measurable function with compact support on $\R^{2n}$. Setting $w=t_i\nu_i$ for any $w\in\Sigma_2$, integration of $\varphi$ along $\Sigma_2$ for its Lebesgue measure $dw$ becomes
\begin{equation}\label{intf}
\int_{\Sigma_2} \varphi(w) dw=\int_{\R^{d_2}} \varphi\Big(\sum_{j=1}^{d_2}t_j\nu_j\Big) dt.
\end{equation}
%
Let us use the convention of \cref{subsetting}, summing $i$ from $1$ to $d_1$ and $k,j$ from $1$ to $d_2$ whenever they appear as free indices. From the explicit expression \cref{PPreal}, taking Fourier transform and performing a change of variables, we compute
\begin{equation}\label{comput1}
\begin{split}
& \int_{\Sigma_2}\int_{\Sigma_1}G_r(w,u)\PP_{x_0}(w,u)du dw=\int_{\R^{d_2}}\int_{\R^{d_1}}G_r(t_j\nu_j,u_ie_i)\PP_{x_0}(t_j\nu_j,u_ie_i)du dt\\
& =\int_{\R^{d_2}}\int_{\R^{d_1}} G_r(t_j\nu_j,u_ie_i)
\exp\left(-\frac{\pi}{2}\sum\limits_{i=d_1+1}^n
\left((t_jb_i^j)^2+(t_ja_i^j)^2\right)\right)\\
&\exp\left(-\frac{\pi}{2}\sum\limits_{i=1}^{d_1} \left((u_i-t_jb_i^j)^2+(t_ja_i^j)^2+2\sqrt{-1}u_i t_ja_i^j\right)\right)du dt\\
& =\int_{\R^{d_2}}\int_{\R^{d_1}} G_r(t_j\nu_j,(u_i+t_jb^j_i)e_i)
\exp\left(-\frac{\pi}{2}\sum\limits_{i=d_1+1}^n ((t_jb_i^j)^2+(t_ja_i^j)^2)\right)\\
&\exp\left(-\frac{\pi}{2}\sum\limits_{i=1}^{d_1}\left(u_i^2+(t_ja_i^j)^2+2\sqrt{-1}u_i t_ja_i^j+2\sqrt{-1} t_kb_i^ka_i^jt_j\right)\right)du dt\\
& =2^{d_2/2}\int_{\R^{d_2}} \til{G}_r(t)\exp\left(-\frac{\pi}{2}\sum\limits_{i=d_1+1}^n \left((t_jb_i^j)^2+(t_ja_i^j)^2\right)\right)\\
&\exp\left(-\pi\sum\limits_{i=1}^{d_1} \left((t_ja^j_i)^2+\sqrt{-1} t_kb_i^ka_i^jt_j\right)\right)dt,\\
\end{split}
\end{equation}
where $\til{G}_r(t)$ are polynomials in $t\in\R^{d_1}$ of the same parity as $r$. Using that $\Sigma_1\cap\Sigma_2=\{0\}$, we get the convergence of the integral in \cref{comput1}, and as the integrand is of the same parity as $r$, the integral vanishes if $r$ is odd. Together with \cref{localag4}, this proves \cref{u1u2}.

Let us now compute the first coefficient of \cref{localag4} in the case $\dim\Lambda_1=n$. From \cref{comput1}, we get
\begin{multline}\label{comput2}
\int_{\Sigma_2}\int_{\Sigma_1}\PP_{x_0}(u,w)du dw
=2^{n/2}\int_{\R^{d_2}} \exp\left(-\pi\sum\limits_{i=1}^n \left((t_ja^j_i)^2+\sqrt{-1} t_kb_i^ka_i^jt_j\right)\right)dt.
\end{multline}
As $\<\nu_1,\dots,\nu_{d_2}\>$ is the basis of an isotropic submanifold, we get that $\om(\nu_j,\nu_k)=0$ for all $1\leq j,k\leq d_2$, which is equivalent through \cref{viejfj} to the fact that $B^TA$ is symmetric. Then summing $i$ from $1$ to $n$, the matrix $(a_i^ka_i^j+\sqrt{-1}b^k_ia_i^j)_{k,j=1}^{d_2}=A^TA+\sqrt{-1}B^TA$ is symmetric, and its real part $A^TA$ is strictly positive as $A$ has maximal rank. Thus from \cref{comput2} and using \cref{gaussintgal}, we get
\begin{equation}\label{comput2bis}
\int_{\Sigma_2}\int_{\Sigma_1}\PP_{x_0}(u,w)du dw =2^{n/2}\det{}^{-\frac{1}{2}}(\sqrt{-1}(B-\sqrt{-1}A)^TA).
\end{equation}
Then the formula \cref{bmj0} for the first coefficient follows from \cref{ABdef,comput2,HermTX,F0,dv=dvom}.
\end{proof}

\subsection{Asymptotic expansion of clean intersections}\label{secclean}

In this section, we deal with the case of general clean intersection in the sense of \cref{cleandef}. The main difference with
\cref{theointer} is that the coefficients of the expansion are
now given as an integral over the fixed point set. The main
additional difficulty is then to show that one can in fact
split the integral between an integral over the fixed point set
and an integral over transversal slices, then
integrate the expansion of \cref{asy} over the transversal slices
following the proof of \cref{theointer}.

As in \cref{sectrans}, the immersions $\iota_i:\Lambda_i\fl X$ and $j_i:\Lambda_1\cap\Lambda_2\fl\Lambda_i,~i=1,2,$ are implicit in the statement of the following theorem, and we omit to mention them for simplicity.

\begin{theorem}\label{theointergal}
Suppose that $(\Lambda_1,\iota_1,\zeta_1)$ and $(\Lambda_2,\iota_2,\zeta_2)$ intersect cleanly. Let $\Lambda_1\cap\Lambda_2=\cup_{q=1}^mY_m$ be the decomposition into connected components of their intersection in the sense above, and set $l_q=\dim Y_q$. Then for any $F\in\cinf(X,\End(E))$, there exist $b_{q,r}\in\C,~r\in\N,~1\leq q \leq m$, such that for any $k\in\N$ and as $p\fl+\infty$,
\begin{equation}\label{<u1u2>}
\<T_{F,p}s_{f_1,p},s_{f_2,p}\>_p=\sum_{q=1}^m p^{n-\frac{d_1+d_2}{2}+\frac{l_q}{2}}\lambda^p_q \sum_{r=0}^{k} p^{-r} b_{q,r}+O(p^{n-\frac{d_1+d_2}{2}+\frac{l_q}{2}-(k+1)}),
\end{equation}
where $\lambda_q\in\C$ is the value of the constant function on $Y_q$ defined for any $x\in Y_q$ by $\lambda_q(x)=\<\zeta_1(x),\zeta_2(x)\>_L$. If $\dim\Lambda_1=n$, we have
\begin{multline}\label{bm0}
b_{q,0}=2^{n/2}\int_{Y_q} \left<F f_1(x),f_2(x)\right>_E\det{}^{1/2}\left(\dot{R}^L/2\pi\right)\frac{dv_{\Lambda_2}}{dv_{\Lambda_2,\om}}(x)\\
\det{}^{-\frac{1}{2}} \Big\{\sqrt{-1}\sum_{k=1}^{n-l_q}h^{TX}_\om(e_k,\nu_i)\om(e_k,\nu_j)\Big\}_{i,j=1}^{d_2-l_q}(x)|dv|_{Y_q,\om}(x),
\end{multline}
where $\<e_i\>_{i=1}^{n-l_q},\<\nu_j\>_{j=1}^{d_2-l_q}$ are local orthonormal frames of the normal bundle of $Y_q$ inside $\Lambda_1,~\Lambda_2$ with respect to $g^{T\Lambda_1}_\om,g^{T\Lambda_2}_\om$, and $|dv|_{Y_q,\om}$ is the Riemannian density of $(Y_q,g^{TY_q}_\om)$. The square root of the determinant is determined by \cref{continuation}.
\end{theorem}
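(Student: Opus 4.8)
The plan is to follow the proof of \cref{theointer} almost verbatim, the only genuinely new feature being that the Gaussian integral computed there at each isolated intersection point is now performed fibrewise along each connected component $Y_q$ of $\Lambda_1\cap\Lambda_2$, the resulting leading density being integrated over $Y_q$. As a first step I would apply the reproducing property \cref{rep} to write
\[
\<T_{F,p}s_{f_1,p},s_{f_2,p}\>_p=\int_{\Lambda_2}\int_{\Lambda_1}\<T_{F,p}(\iota_2(x),\iota_1(y))\,\zeta_1^pf_1(y),\zeta_2^pf_2(x)\>_{E_p}\,dv_{\Lambda_1}(y)\,dv_{\Lambda_2}(x),
\]
and then, using the off-diagonal decay in \cref{Toepasy} and the properness of $\iota_1,\iota_2$, discard all pairs $(x,y)$ for which $\iota_1(y)$ or $\iota_2(x)$ lies outside a fixed small neighbourhood of $\iota_1(\Lambda_1)\cap\iota_2(\Lambda_2)$, at the cost of an $O(p^{-\infty})$ error. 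A partition of unity on $\Lambda_1,\Lambda_2$, the additivity \cref{add}, \cref{propnonint} and \cref{uopinf} then reduce the statement to the case where $f_1,f_2$ are supported in arbitrarily small neighbourhoods of $j_1(Y_q),j_2(Y_q)$ for a single component $Y_q$; the sum over $q$ in \cref{<u1u2>} reappears at the end.

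For such a $Y_q$ I would set up the local model by parametrising a neighbourhood of $j_1(Y_q)$ in $\Lambda_1$ by (a neighbourhood of the zero section of) its normal bundle $\nu_1$, and likewise $\Lambda_2$ by $\nu_2$, so that the double integral above becomes an integral over $(x_0,n_1)\in\nu_1$ and $(b,n_2)\in\nu_2$ with $x_0,b\in Y_q$. Covering the image of $Y_q$ in $X$ by finitely many of the charts $\phi_{x_0}$ of \cref{defphi}, chosen smoothly in the base point, and using a partition of unity on $Y_q$, one is reduced to the contribution of a running base point $x_0\in Y_q$: in the chart $\phi_{x_0}$, the point of $\Lambda_1$ over $(x_0,n_1)$ corresponds to $Z'\approx n_1\in N_1$, the $g^{TX}_\om$-orthogonal complement of $W:=T_{x_0}Y_q$ inside $\Sigma_1:=T_{x_0}\Lambda_1$, and the point of $\Lambda_2$ over $(b,n_2)$ corresponds to $Z\approx\delta+n_2$, where $\delta\in W$ is the displacement along $Y_q$ from $x_0$ to $b$ and $n_2\in N_2$; by \cref{theta} the remaining region has $\delta,n_1,n_2\in B^{\R^{2n}}(0,\epsilon p^{-\theta/2})$ up to $O(p^{-\infty})$. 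The key point, which uses cleanness and isotropy, is that in the exponent of $\PP_{x_0}$ in \cref{PPreal} one has to leading order $|Z-Z'|^2=|\delta|^2+|n_1-n_2|^2$ (since $N_j\perp W$) and $\Om(Z,Z')=\Om(n_2,n_1)$ (since $\delta,n_1\in\Sigma_1$ and $\Sigma_1$ is isotropic, so $\Om(\delta,n_1)=0$); thus after the rescaling $(\delta,n_1,n_2)\mapsto(\sqrt p\,\delta,\sqrt p\,n_1,\sqrt p\,n_2)$ the Gaussian and the phase depend only on the fast variables, and the rescaling Jacobian $p^{-(d_1+d_2-l_q)/2}$ together with the $p^n$ of \cref{asy} yields the global power $p^{n-(d_1+d_2)/2+l_q/2}$ in \cref{<u1u2>}.

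Next I would substitute the expansions of \cref{asy} and \cref{Toepasy}, Taylor expand the remaining smooth data — the tubular-neighbourhood volume densities, the factors $\kappa_{x_0}^{-1/2}$, the sections $f_1,f_2$, and the second-order discrepancy between $\Lambda_j$ and its tangent space $\Sigma_j$ — in the fast variables exactly as in \cref{Taylorfk}, and organise the result by half-integer powers of $p$. Since the base point sits on $Y_q$, both arguments of $J_{r,x_0}$ (resp. $\Q_{r,x_0}$) are at the $0$-scale, so the terms of odd total parity in the fast variables integrate to zero against the even function $\PP_{x_0}$, leaving only integer powers, which is \cref{<u1u2>}. For the leading coefficient $b_{q,0}$ when $\dim\Lambda_1=n$ I would use $J_{0,x_0}=\Id$ and $\Q_{0,x_0}=F_{x_0}$: the $\delta$-integral over $W$ is the elementary Gaussian $\int_W\exp(-\frac{\pi}{2}|\delta|^2)\,d\delta=2^{l_q/2}$, while the $(n_1,n_2)$-integral over $N_1\times N_2$ is, fibrewise over $Y_q$, precisely the Gaussian integral already carried out in \cref{comput1,comput2,comput2bis}, now performed in the symplectic reduction of $(\R^{2n},\Om)$ by $W$, in which the images of $\Sigma_1,\Sigma_2$ are transverse isotropic subspaces of dimensions $d_1-l_q$ and $d_2-l_q$, the first being Lagrangian; this produces the factor $\det{}^{-\frac{1}{2}}\{\sqrt{-1}\sum_{k=1}^{n-l_q}h^{TX}_\om(e_k,\nu_i)\om(e_k,\nu_j)\}_{i,j=1}^{d_2-l_q}$, with the determination of the root given by \cref{continuation}. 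Using \cref{dv=dvom,dvom} to convert the $\det(\dot R^L/2\pi)$ coming from $\PP_{x_0}$ and the density Jacobians into $\det{}^{1/2}(\dot R^L/2\pi)\,\frac{dv_{\Lambda_2}}{dv_{\Lambda_2,\om}}$, and integrating the resulting leading density over $Y_q$ against $|dv|_{Y_q,\om}$, gives \cref{bm0}.

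The main obstacle is not the Gaussian computation, which is a fibred copy of the one already done in \cref{theointer}, but the clean change of variables and its uniformity: one must check that the tubular coordinates and the splitting $\Sigma_j=W\oplus N_j$ vary smoothly and nonsingularly with $x_0\in Y_q$ (cleanness enters here, guaranteeing $\Sigma_1\cap\Sigma_2=W$ and hence $N_1\cap N_2=\{0\}$, which is what makes the Gaussian integral convergent), that the curvature of $\Lambda_1,\Lambda_2$ and the variation of $W,N_1,N_2$ only contribute to strictly lower-order Taylor terms, and — most importantly — that \cref{asy} and \cref{Toepasy} are being invoked with the base point $x_0$ ranging over the positive-dimensional set $Y_q$, so that the uniform $|\cdot|_{\CC^m(X)}$ form of those statements is essential throughout.
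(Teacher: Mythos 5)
Your proposal is correct and follows essentially the same route as the paper: localization to each component $Y_q$, a slow variable running along $Y_q$ with rescaled transverse fast variables, the parity argument killing the half-integer powers, and a fibrewise copy of the discrete-intersection Gaussian (which the paper computes directly in \cref{comput1gal}; your symplectic-reduction reading yields the same determinant and the same $2^{l_q/2}\cdot 2^{(n-l_q)/2}$ factor). The only real divergence is bookkeeping: the paper fibers only $\Lambda_2$ over $Y$ via $\exp^{\Lambda_2}_\om$ and extends $\exp^{\Lambda_2}_\om\cup\,\phi^{\Lambda_1}_{x_0}$ to the chart \cref{defphi}, so that $\Lambda_1$ and the normal slice of $\Lambda_2$ are exactly linear in the chart, whereas your symmetric double-tubular parametrization leaves a quadratic discrepancy sitting inside the exponent multiplied by $p$, which must be expanded in the rescaled variables (it contributes to every subleading coefficient, so it is not handled by \cref{Taylorfk} alone, though the parity structure still goes through).
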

\begin{proof}
Let us set $F=\Id_E$, the proof of the general case being totally analogous by \cref{Toepasy} and \cref{Toeplag}. Using \cref{uopinf}, \cref{add,<u1u2>}, we can assume that $\Lambda_1\cap\Lambda_2$ has a unique connected component $Y$, and that $f_j,~j=1,2,$ have compact support in a given open set. The following computations are then local on $Y$, and we may assume $Y$ oriented and embedded in $\Lambda_2$ by $j_2:Y\fl\Lambda_2$. We omit the mention of $j_2$ in the sequel. We set $l=\dim Y$.

Let $N$ be the \emph{normal bundle} of $Y$ inside $\Lambda_2$, identified with the orthogonal complement of $TY$ in $(T\Lambda_2,g^{T\Lambda_2}_\om)$, and let $g^N_\om$ be the induced metric on $N$.
%
%
Let $\epsilon>0$ be such that the exponential map $\exp^{\Lambda_2}_\om$ of $(\Lambda_2,g^{T\Lambda_2}_\om)$ restricted to $B^N(0,\epsilon):=\{w\in N~|~|w|_{g^{N}}<\epsilon\}$ is a diffeomorphism on its image. Then for any $x\in Y$ and with $Y$ embedded in $N$ as its zero section, the differential $d\exp^{\Lambda_2}_{\om,x}:T_xY\oplus N_x\rightarrow T_x\Lambda_2$ is the identity map, and $\exp^{\Lambda_2}_\om(B^N(0,\epsilon))$ is a tubular neighbourhood of $Y$ in $\Lambda_2$.

Let $dw$ be an Euclidean volume form on the fibres of $(N,g^N_\om)$ such that the volume form $dw dv_{Y,\om}$ on the total space of $N$ is compatible with the orientation of $X$. Let $h_2\in\cinf(B^N(0,\epsilon),\R)$ be defined via the exponential map for any $x\in Y,~w\in N_x$ with $|w|_{g^N_{\om,x}}<\epsilon$ by
\begin{equation}\label{volE1}
dv_{\Lambda_2}(x,w)=h_2(x,w)~dw dv_{Y,\om}(x).
\end{equation}
Then $h_2(x,0)=(dv_{\Lambda_2}/dv_{\Lambda_2,\om})(x)$.
%
Let us now define $I(f_1,f_2)\in\cinf(B^N(0,\epsilon),\C)$ at $x\in Y,w\in N_x$ with $|w|_{g^N_{\om,x}}<\epsilon$, by the formula
\begin{multline}\label{midint}
I(f_1,f_2)(x,w)=\int_{\Lambda_1}\<P_p((x,w),\iota_1(y))\iota_{1,p}.\zeta_1^pf_1(y),\zeta_2^pf_2(x_0,w)\>_{E_p}\\
 h_2(x_0,w)dv_{\Lambda_1}(y).
\end{multline}

Using \cref{defLagstate}, \cref{add}, \cref{rep,uopinf}, we get from \cref{volE1,midint},
\begin{multline}\label{intmidint}
\<s_{1,p},s_{2,p}\>_p = \int_{\Lambda_2}\int_{\Lambda_1} \<P_p(\iota_2(x),\iota_1(y))\iota_{1,p}.\zeta_1^pf_1(y),\iota_{2,p}.\zeta_2^pf_2(x)\>_{E_p}dv_{\Lambda_1}(y)dv_{\Lambda_2}(x)\\
= \int_{\exp^{\Lambda_2}_\om(B^N(0,\epsilon))} \int_{\Lambda_1} \<P_p(x,\iota_1(y))\iota_{1,p}.\zeta_1^pf_1(y),\zeta_2^pf_2(x)\>_{E_p}\\
dv_{\Lambda_1}(y)dv_{\Lambda_2}(x)+O(p^{-\infty})\\
=\int_{x\in Y}\int_{B^{N_x}(0,\epsilon)} I(f_1,f_2)(x,w) dwdv_{Y,\om}(x)+O(p^{-\infty}).
\end{multline}
Fix now $x_0\in Y$.
%
%
%
%
%
%
Take $\epsilon>0,~U\subset\Lambda_1$ and a diffeomorphism $\phi_{x_0}^{\Lambda_1}:B^{\R^{d_1}}(0,\epsilon)\fl U$ sending $0$ to $x_0$ and such that its differential at $0$ identifies $\<\cdot,\cdot\>$ with $g^{T\Lambda_1}_\om$. As $\exp^{\Lambda_2}_\om(B^{N_{x_0}}(0,\epsilon))$ and $\Lambda_1$ intersect cleanly at $x_0$ only
and following the proof of \cref{theointer}, for $\epsilon>0$
small enough we can extend the union $\exp^{\Lambda_2}_\om\cup\ \phi_{x_0}^{\Lambda_1}:B^{N_{x_0}}(0,\epsilon)\cup B^{\R^n}(0,\epsilon)\fl X$ to a diffeomorphism $\phi_{x_0}:B^{\R^{2n}}(0,\epsilon)\fl V$ as in \cref{defphi}, identifying $U$ with $B^{\Sigma}(0,\epsilon)$, where $\Sigma$ is an isotropic subspace of $(\R^{2n},\Om)$ and where the fibre $(N_{x_0},g^{N}_{\om,x_0})$ is seen as an Euclidean subspace of $(\R^{2n},\<\cdot,\cdot\>)$.

Let us identify $E,L$ over $B^{\R^{2n}}(0,\epsilon)$ with $E_{x_0},L_{x_0}$ as in \cref{locmod} and use $\zeta_1(x_0)$ to identify $L_{x_0}$ with $\C$. Then $\zeta_1,\zeta_2$ are identified with $1,\overline{\lambda}\in\C$ over $B^{\R^{2n}}(0,\epsilon)$, where $\lambda=\<\zeta_1(x_0),\zeta_2(x_0)\>_L$. Let $du$ be the Lebesgue measure of $\Sigma$ and $h_1\in\cinf(B^{\Sigma}(0,\epsilon),\R)$ be such that for $u\in B^{\Sigma}(0,\epsilon)$,
\begin{equation}\label{volE2}
dv_{\Lambda_1}(u)=h_1(u)du,~~\text{with}~~
h_2(0)=(dv_{\Lambda_1}/dv_{\Lambda_1,\om})(x_0).
\end{equation}
By \cref{thetagal} and \cref{asy}, for any $k\in\N$ and $\delta\in\ ]0,1[$, we get $\theta\in\ ]0,1[$ such that as $p\fl+\infty$,
\begin{multline}\label{loclagY2'}
\int_{B^{N_{x_0}}(0,\epsilon)}I(f_1,f_2)(x_0,w)dw\\
 =\int_{B^{N_{x_0}}(0,\epsilon)}\int_{B^{\Sigma}(0,\epsilon)}\<P_p(w,u)\zeta_1^pf_1(u),\zeta_2^pf_2(w)\>_{E_p} h_2(x_0,w)dv_{\Lambda_1}(u)dw\\
=\int_{B^{N_{x_0}}(0,\epsilon p^{-\theta/2})}\int_{B^{\Sigma}(0,\epsilon p^{-\theta/2})}\<P_p(w,u)\zeta_1^pf_1(u),\zeta_2^pf_2(w)\>_{E_p}\\
h_2(x_0,w)h_1(u)dudw+O(p^{-\infty})\\
=  \lambda^p p^n\sum_{r=0}^k p^{-\frac{r}{2}}\int_{B^{N_{x_0}}(0,\epsilon p^{-\theta/2})}\int_{B^{\Sigma}(0,\epsilon p^{-\theta/2})}\\
\<J_{r,x_0}\PP_{x_0}(\sqrt{p}w,\sqrt{p}u)f_{1,x_0}(u),f_{2,x_0}(w)\>_E\\
 \kappa_{x_0}^{-1/2}(w)\kappa_{x_0}^{-1/2}(u)h_2(x_0,w)h_1(u)dudw+p^{n-\frac{d_1+d_2}{2}+\frac{l}{2}}O(p^{-\frac{k+1}{2}+\delta}).
\end{multline}
Consider now the Taylor expansions up to order $k\in\N$ of $h_j\kappa_{x_0}^{-1/2}f_{j,x_0}$ for $j=1,2$ as in \cref{Taylorfk}. As in the proof of \cref{theointer}, we get $\delta'>0$ and a sequence $\{F_{x_0,r}\}_{r\in\N}$ of polynomials in two variables of $\R^{2n}$ with values in $\C$, of the same parity as $r$ and with
\begin{equation}
F_{x_0,0}(Z,Z')=\<f_1(x_0),f_2(x_0)\>_E\frac{dv_{\Lambda_1}}
{dv_{\Lambda_1,\om}}\frac{dv_{\Lambda_2}}{dv_{\Lambda_2,\om}}(x_0)
\det\left(\dot{R}^L_{x_0}/2\pi\right),
\end{equation}
such that as $p\fl+\infty$, equation \cref{loclag3} becomes
\begin{multline}\label{loclagY4}
\int_{B^{N_{x_0}}(0,\epsilon)}I(f_1,f_2)(x_0,w)dw=p^{n-\frac{d_1+d_2}{2}+\frac{l}{2}}\lambda^p\\
\sum_{r=0}^k p^{-r/2}\int_{N_{x_0}}\int_{\Sigma}F_{x_0,r}(w,u)\PP_{x_0}(w,u)dudw+p^{n-\frac{d_1+d_2}{2}+\frac{l}{2}}O(p^{-\frac{k+1}{2}+\delta'}).
\end{multline}
Thus writing
\begin{equation}\label{brx0}
b_r(x_0)=\int_{N_{x_0}}\int_{\Sigma}F_{x_0,r}(w,u)\PP_{x_0}(w,u)dudw,
\end{equation}
and recalling that the estimates are uniform in $x_0\in Y$, we get from \cref{midint}, \cref{intmidint} and \cref{loclagY4},
\begin{equation}\label{u1u2sumgal}
\begin{split}
& \<s_{1,p},s_{2,p}\>_p=p^{n-\frac{d_1+d_2}{2}+\frac{l}{2}}\lambda^p\sum_{r=0}^k p^{-r/2}\int_Y b_r(x)dv_Y(x)+p^{n-\frac{d_1+d_2}{2}+\frac{l}{2}}O(p^{-\frac{k+1}{2}}).
\end{split}
\end{equation}
Now, we can use \cref{comput1} to compute \cref{brx0} in general, and the argument of parity holds in the same way here, so that the coefficients $b_r$ defined in \cref{brx0} for $r\in\N$ vanish identically for $r$ odd. By \cref{u1u2sumgal}, this gives \cref{<u1u2>}.

Assume now $\dim\Lambda_1=n$, and let us compute
\begin{multline}\label{b0N}
b_0(x_0)=\frac{dv_{\Lambda_1}}{dv_{\Lambda_1,\om}}
\frac{dv_{\Lambda_2}}{dv_{\Lambda_2,\om}}(x_0)
\det\left(\dot{R}^L_{x_0}/2\pi\right)
\<f_1(x_0),f_2(x_0)\>_E\\
\int_{N_{x_0}}\int_{\Sigma}\PP_{x_0}(w,u)dudw.
\end{multline} 
In the same way than in the proof of \cref{theointer}, we can take the canonical symplectic basis $\{e_j,f_j\}_{j=1}^n$ of $(\R^{2n},\Om)$ such that $\Sigma=\R^n\times\{0\}$ and such that $\<e_{n-l+1},\dots, e_n\>$ is an oriented orthonormal basis of $(T_{x_0}Y,g^{TY}_\om)$ in the identification of $\R^{2n}$ with $T_{x_0}X$ via $d\phi_{x_0}$. Let $\nu_1,\dots, \nu_{d_2-l}\in N_{x_0}$ be such that $\<\nu_1,\dots, \nu_{d_2-l},e_{n-l+1},\dots, e_n\>$ is an oriented orthonormal basis of the isotropic subspace $\Sigma_2:=N_{x_0}\oplus T_{x_0}Y$. Then for $ 1\leq i\leq d_2-l$ and $n-l+1\leq j\leq n$, we have that $\<\nu_i,f_j\>=-\omega(\nu_i,e_j)=0$. Thus setting
\begin{equation}\label{ABdefgal}
\begin{split}
A=(a_i^j)_{1\leq i\leq n-l,\,1\leq j\leq d_2-l}\quad & \text{with}\quad a_i^j=\om(e_i,\nu_j),\\
B=(b_i^j)_{1\leq i\leq n-l,\,1\leq j\leq d_2-l}\quad & \text{with}\quad b_i^j=\<e_i,\nu_j\>,
\end{split}
\end{equation}
we get for all $1\leq j\leq d_2-l$,
\begin{equation}\label{viejfjgal}
\nu_j=\sum_{i=1}^{n-l} b_i^je_i+\sum_{i=1}^{n-l} a_i^jf_i.
\end{equation}
Write $dt:=dt_1\dots dt_{d_2-l}$ for the Lebesgue measure of $\R^{d_2-l}$. Using the summation convention of \cref{subsetting} with $i$ from $1$ to $n-l$ and $j,k$ from $1$ to $d_2-l$ whenever they appear as free indices, we get  
\begin{equation}\label{comput1gal}
\begin{split}
\int_{N_{x_0}}&\int_{\Sigma}\PP_{x_0}(w,u)dudw=\int_{\R^{d_2-l}}\int_{\R^n}\PP_{x_0}(t_j\nu_j,u_ie_i)dudt\\
&=\int_{\R^{d_2-l}}\int_{\R^n} \exp\left(-\frac{\pi}{2}\sum\limits_{i=n-l+1}^n u_i^2\right)\\
&\quad\quad\quad\quad\quad\quad\exp\left(-\frac{\pi}{2}\sum\limits_{i=1}^{n-l} \left((u_i-t_jb_i^j)^2+(t_ja_i^j)^2+2\sqrt{-1}u_i t_ja_i^j\right)\right)du dt\\
&=2^{l/2}\int_{\R^{d_2-l}}\int_{\R^{n-l}} \exp\Big(-\frac{\pi}{2}\sum\limits_{i=1}^{n-l}u_i^2+(t_ja_i^j)^2\\
&\quad\quad\quad\quad\quad\quad +2\sqrt{-1}u_i t_ja_i^j+2\sqrt{-1} t_kb_i^ka_i^jt_j\Big)du_1\dots du_{n-l} dt\\
&=2^{n/2}\int_{\R^{d_2-l}} \exp\left(-\frac{\pi}{2}\sum\limits_{j=1}^{n-l} \left(2(t_ja^j_i)^2+2\sqrt{-1} t_kb_i^ka_i^jt_j\right)\right)dt.
\end{split}
\end{equation}
As $\om(\nu_j,\nu_k)=0$, for all $1\leq j,k\leq d_2-l$, we know from \cref{viejfjgal} that the matrix $B^TA$ is symmetric. Then as in \cref{comput2bis}, we get
\begin{equation}\label{comput2gal}
\int_{N_{x_0}}\int_{\Sigma}\PP_{x_0}(w,u)dudw=2^{n/2}\det{}^{-1/2}(\sqrt{-1}A(B-\sqrt{-1}A)).
\end{equation}
Using the explicit definition of $A$ and $B$ above and from \cref{comput2gal,HermTX,dv=dvom}, we get \cref{bm0}.

\end{proof}

\begin{rem}\label{meta}
Suppose that the first Chern class $c_1(TX)$ of $(TX,J)$ is even in $H^2(X,\Z)$. Then there exists a complex line bundle $K_X^{1/2}$ over $X$ such that its second tensor power is equal to the canonical line bundle $K_X$ of $X$. The choice of $K_X^{1/2}$ does not depend on $J$ compatible with $\om$, and is called a \emph{metaplectic structure} on $(X,\om)$. Note that such a choice is not unique in general.
Now if $\iota:\Lambda\fl X$ is an immersed Lagrangian submanifold, then $\iota^*K_X$ is canonically isomorphic to $\det(T^*\Lambda_\C)$ over $\Lambda$, and we call $\iota^*K_X^{1/2}$ the \emph{half-form bundle} of $\Lambda$. We endow $K_X^{1/2}$ with the Hermitian structure induced by $h^{K_X}_\om$ as in \cref{subsetting}.

Consider now the setting of \cref{theointergal}, with $\dim\Lambda_1=\dim\Lambda_2=n$ and $g^{TX}=g^{TX}_\om$. Via the isomorphism above, we define the \emph{angle} of $\iota_j:\Lambda_j\fl X,~j=1,2$, as a function on any connected component $Y$ of their intersection by the formula
\begin{equation}\label{detlambda}
\begin{split}
\det\{\Lambda_1,\Lambda_2\}&=h^{K_X}_\om(dv_{\Lambda_1},dv_{\Lambda_2})^{-1}\\
&=\det\left\{h^{TX}_\om(e_i,\nu_j)\right\}_{i,j=1}^{n-l}.
\end{split}
\end{equation}
On the other hand, following \cite[Lemma 3.1]{BPU95}, we can construct a sesquilinear pairing $\#:\iota_1^*K^{1/2}_{X}|_Y\,\times\,\iota_2^*K_{X}^{1/2}|_Y\fl\det(T^*Y_\C)$ over $Y$, depending only on the metaplectic structure of $(X,\om)$, which at any $x\in Y$ takes two square roots $dv_{\Lambda_j,x}^{1/2}$ of $dv_{\Lambda_j,x}$ for $j=1,2$ to
\begin{equation}\label{etadef}
dv_{\Lambda_1,x}^{1/2}\#dv_{\Lambda_2,x}^{1/2}=\det{}^{-1/2}\{\om(e_i,\nu_j)\}_{i,j=1}^{n-l}dv_{Y,x},
\end{equation}
for an Euclidean volume form $dv_{Y,x}$ of $(T_xY,g^{TY}_{x})$ and some coherent choice of square root induced by $dv_{\Lambda_1,x}^{1/2},~dv_{\Lambda_2,x}^{1/2}$ and $dv_{Y,x}$. Then taking $E=K_X^{1/2}$, \cref{theointergal} gives the following formula for $b_{0}$ on $Y$ as in \cref{bm0},
\begin{equation}\label{bm0BPU}
b_{0}=2^{\frac{n-l}{2}}e^{-\sqrt{-1}\frac{\pi(n-l)}{2}}\int_{Y} \det\{\Lambda_1,\Lambda_2\}^{-1}f_1\#f_2.
\end{equation}
In the particular case of $(X,J,\om)$ Kähler,
this formula can be compared with the one appearing in
\cite[Prop.\ 3.16]{BPU95}. In particular, they get $\det\{\Lambda_1,\Lambda_2\}^{-1/2}$ instead of $\det\{\Lambda_1,\Lambda_2\}^{-1}$ as in \cref{bm0BPU}. This discrepancy is due to the fact that even though they use half-forms, their Lagrangian states take values in $L^p$ and not in $L^p\otimes K_X^{1/2}$ as it is the case here.
Note that without metaplectic structure on $(X,\om)$, only the product of the square root of \cref{detlambda} with \cref{etadef} makes sense in general (see \cite{Tuy16} for related results).
\end{rem}

Finally, note that the assumption $\dim\Lambda_1=n$ for formula
\cref{bm0} of the first coefficient of the expansion
was used in the proof of \cref{theointergal} to compute the
elegant formula \cref{comput2gal}. Without this assumption,
one still gets an integral of the form \cref{comput1gal} by
following the method of the proof, and the classical formulas of
\cref{Gausssec} for the
Gaussian integral can be used to compute it explicitly.
One also gets a formula in terms of the symplectic
form, the Riemannian metric and local frames via the definition
\cref{ABdefgal} of the
coefficients appearing inside the Gaussian function.

\section{Extensions to non-compact manifolds and orbifolds}\label{secnoncpctorbi}

In this section, we show how one can adapt the results of the previous Sections in the case of non-compact manifolds and orbifolds. We will work for simplicity in the case of $(X,J,\om)$ Kähler and $g^{TX}=g^{TX}_\om$. Then as underlined in the introduction, the renormalized Bochner Laplacian \cref{deltalpe} reduces to the Kodaira Laplacian on sections. 

Note further that the existence of an expansion of the form \cref{Toepasyexp} is a straightforward consequence of the existence of an expansion as in \cite[(4.9)]{MM08b}.

\subsection{Non-compact case}\label{secnoncpct}

Let $(X,J,\om,g^{TX})$ be a complete Kähler manifold with $\om(\cdot,\cdot)=g^{TX}(J\cdot,\cdot)$, let $(L,h^L)$ be a holomorphic line Hermitian bundle over $X$ with Chern connection $\nabla^L$ satisfying \cref{preq}, and let $(E,h^E)$ be an auxiliary holomorphic Hermitian bundle with Chern connection $\nabla^E$. For any $p\in\N^*$, let $H^0_{(2)}(X,E_p)$ denote the space of holomorphic sections of $E_p=L^p\otimes E$ which are square integrable with respect to the $L^2$-Hermitian product defined as in \cref{L2}. Let $P_p$ denote the orthogonal projection from the space of $L^2$-sections of $E_p$ onto $H^0_{(2)}(X,E_p)$ with respect to this product. Then as noticed in \cite[Rem.1.4.3]{MM07}, $P_p$ has smooth Schwartz kernel $P_p(\cdot,\cdot)\in\cinf(X\times X,E_p\boxtimes E_p^*)$ with respect to the Riemannian volume form $dv_X$ of $(X,g^{TX})$, and $P_p(\cdot,\cdot)$ is square integrable and holomorphic with respect to its first variable.

Let us write $R^{det}$ for the curvature of the Chern connection of $K_X^*$. Then we have the following result.

\begin{theorem}{\cite[Th.~5.2, 5.3]{MM08b}}\label{noncompact}
Suppose that there exists $C>0$ such that  for all $x\in X$ and $v\in T_xX$, the following inequality holds in the sense of endomorphisms of $E$,
\begin{equation}\label{hypL2}
\sqrt{-1}(R^{det}\Id_E+R^E)(v,Jv)>-C\om(v,Jv)\Id_E.
\end{equation}

Then for any compact set $K\subset X$, \cref{theta} holds uniformly for any $x,x'\in K$ and \cref{asy} holds uniformly for any $x_0\in X$.

If $F\in\cinf(X,\End(E))$ has compact support, then \cref{Toepasy} holds uniformly for any $x_0\in X$.
\end{theorem}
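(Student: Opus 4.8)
The plan is to deduce the three assertions from the complete-manifold Bergman kernel theory of Ma--Marinescu \cite[Th.~5.2, 5.3]{MM08b}, the only substantive point being to check that \cref{hypL2} is exactly the hypothesis under which that machinery runs and yields the \emph{uniform} estimates claimed. In the Kähler case with $g^{TX}=g^{TX}_\om$ and $\Phi=-\sqrt{-1}R^E$ contracted with $\om$, the renormalized Bochner Laplacian \cref{deltalpe} equals $2\square_p$, where $\square_p:=\dbar_p^*\dbar_p$ acts on $\cinf(X,E_p)$. First I would invoke the Bochner--Kodaira--Nakano identity for $(0,q)$-forms with values in $E_p$: together with \cref{preq} it expresses the Kodaira Laplacian on $(0,q)$-forms, $q\geq1$, as $\tfrac12\Delta^{E_p}$ plus curvature terms of the form $p\,\dot R^L$ acting on the form degree plus $\sqrt{-1}(R^{det}\Id_E+R^E)$ contracted with $\om$. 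Since \cref{hypL2} is a \emph{uniform} lower bound on $X$, it produces $C_1,C_2>0$ independent of $p$ with $\square_p\geq C_1p-C_2$ on $L^2$-forms of positive degree, hence $H^q_{(2)}(X,E_p)=0$ for $q\geq1$ and $p$ large, and a spectral gap $\Spec(\square_p)\cap\,]0,C_1p-C_2[\,=\emptyset$ on sections. Essential self-adjointness on the complete manifold and elliptic regularity then give that $P_p$ is the orthogonal projection onto $H^0_{(2)}(X,E_p)$ with smooth Schwartz kernel, holomorphic in the first variable, as in \cite[Rem.~1.4.3]{MM}.

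Next I would localize by finite propagation speed. Since $(X,g^{TX})$ is complete, $\cos(u\sqrt{\square_p})$ has unit propagation speed, so its Schwartz kernel is supported in $\{d^X(x,x')\leq|u|\}$. Fixing an even Schwartz function $\varphi$ with $\varphi(0)=1$ and $\widehat\varphi$ supported in $]-\epsilon_0,\epsilon_0[$ and rescaling it as in \cite[\S4.2]{MM08b} into $\varphi_p$, the spectral gap gives $\|P_p-\varphi_p(\square_p)\|_{\mathrm{op}}=O(p^{-\infty})$ -- in fact in all Sobolev, hence all $\CC^m$, norms on $X\times X$ by Sobolev embedding -- while Fourier inversion and the support of $\widehat{\varphi_p}\subset\widehat\varphi$ make $\varphi_p(\square_p)$ have kernel supported in $\{d^X(x,x')\leq\epsilon_0\}$. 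Consequently, on a compact set $K$ and for $d^X(x,x')\geq\epsilon_0$ one has $P_p(x,x')=O(p^{-\infty})$ directly, while for $x,x'$ in a common ball $B^X(x_0,\epsilon_0)$ the kernel $P_p(x,x')$ coincides modulo $O(p^{-\infty})$ with a kernel depending only on $\square_p$ restricted to $B^X(x_0,2\epsilon_0)$. This reduces \cref{theta} and \cref{asy} to a statement local on balls of the fixed radius $\epsilon_0$.

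On such a ball I would then run the rescaling argument of the compact case verbatim: choose the coordinate chart $\phi_{x_0}$ of \cref{defphi} (available since $\epsilon_0$ is fixed and, in the Kähler setting, realized via holomorphic normal coordinates), trivialize $L$ and $E$ by radial parallel transport as in \cref{locmod}, rescale $Z\mapsto Z/\sqrt p$, and compare the resulting operator with the model harmonic oscillator on $\R^{2n}$ whose Bergman kernel is $\PP_{x_0}$ of \cref{PPreal}; this is exactly the analysis of \cite[\S4]{MM08b}, reproduced in \cite[Th.~4.18']{DLM06} and \cite[Th.~2.1]{LMM}, and it simultaneously yields the Gaussian off-diagonal decay \cref{theta} inside the ball and the near-diagonal expansion \cref{asy} with the normalization \cref{J0}. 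Crucially, every constant in that analysis depends only on finitely many $\CC^m$-norms of the metrics, connections and curvatures on balls of radius $\epsilon_0$, so the estimates are uniform in $x_0\in X$. Finally, for $F\in\cinf(X,\End(E))$ with compact support $K$, only $w\in K$ contributes to $T_{F,p}(x,y)=\int_X P_p(x,w)F(w)P_p(w,y)\,dv_X(w)$; inserting the expansion \cref{asy} for $P_p(x,w)$ and $P_p(w,y)$, which holds uniformly on $K$, and carrying out the Gaussian integrals as in \cite[Lemma~4.6]{MM08b} gives \cref{Toepasy} with the normalization \cref{Q0}.

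The main obstacle is the uniformity in $x_0$ on the non-compact manifold: one must verify that both the finite-propagation-speed replacement and the model comparison involve only a fixed finite jet of the geometric data on balls of the fixed radius $\epsilon_0$, and that these data are uniformly controlled -- which is ensured by completeness together with the uniform curvature bound \cref{hypL2}, the latter being precisely what makes the spectral gap, hence all the $O(p^{-\infty})$ error terms, uniform in $p$ and in the base point. Once this bookkeeping is in place, the proof is a transcription of \cite[\S\S4--5]{MM08b}.
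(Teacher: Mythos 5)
The paper offers no proof of \cref{noncompact}: the statement is imported from \cite[Th.~5.2, 5.3]{MM08b}, so the only meaningful comparison is with the argument in that reference. Your outline follows exactly its strategy — spectral gap for the Kodaira Laplacian $\dbar_p^*\dbar_p$ from the Bochner--Kodaira--Nakano inequality under \cref{hypL2} (with essential self-adjointness supplied by completeness), localization of $P_p$ by finite propagation speed of the wave operator, the compact-case rescaling analysis of \cite{DLM06}, \cite{MM08b} on balls of fixed radius, and insertion of the resulting expansion into the kernel of $T_{F,p}$ for compactly supported $F$ — so the architecture is the intended one and is sound.

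The genuine gap is your closing uniformity claim. You assert that the constants in \cref{theta}, \cref{asy}, \cref{Toepasy} are uniform in $x_0\in X$ because ``completeness together with the uniform curvature bound \cref{hypL2}'' controls the geometric data on balls of radius $\epsilon_0$. It does not: \cref{hypL2} is only a \emph{lower} bound on $\sqrt{-1}(R^{det}\Id_E+R^E)$ against $\om$, and completeness gives no upper bound on the curvature of $g^{TX}=g^{TX}_\om$ or of $\nabla^E$, no lower bound on the injectivity radius, and not even a single $\epsilon_0>0$ for which charts as in \cref{defphi} exist with uniformly controlled distortion at infinity. What your localization argument genuinely yields is that all constants depend only on finitely many derivatives of the metrics, connections and curvatures on balls of fixed radius around the points considered, hence uniformity of the three statements when $x,x',x_0$ range over a compact subset of $X$ (the off-diagonal $O(p^{-\infty})$ bound with one point in a compact set being global, via the spectral gap, finite propagation speed and local elliptic estimates there). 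That compact-set uniformity is what the cited theorems actually furnish and all that the paper uses — in \cref{thL2} the compact set is chosen to contain the images of the compact Bohr--Sommerfeld submanifolds, and $F$ has compact support — but it does not substantiate a blanket ``uniformly for any $x_0\in X$''; to obtain that you would need an additional bounded-geometry hypothesis, so you should either add one or state your conclusion with uniformity on compact subsets only.
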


From now on, we suppose that \cref{hypL2} is verified for $X$. Then \cref{BS} still makes sense in this context, provided $\Lambda$ is compact. Precisely, for $(\Lambda,\iota,\zeta)$ Bohr-Sommerfeld manifold as in \cref{BS} with $\Lambda$ compact and for $f\in\cinf(\Lambda,\iota^*E)$, we define the associated isotropic state $\{s_{f,p}\}_{p\in\N}$ in the same way than in \cref{defLagstate} for any $p\in\N^*$ and $x\in X$ by the formula 
\begin{equation}\label{defLagstateL2}
s_{f,p}(x)=\int_{\Lambda} P_p(x,\iota(y))\iota_p.\zeta^pf(y)dv_{\Lambda}(y).
\end{equation}

Then as $\Lambda$ is compact, we get that $s_{f,p}\in H^0_{(2)}(X,E_p)$. Furthermore, the following analogue of \cref{proprepgal} holds.
\begin{lem}\label{deflagL2}
Suppose that $(X,J,\om,g^{TX})$ is a complete Kähler manifold satisfying \cref{hypL2}, and let $(\Lambda,\iota,\zeta)$ be a compact Bohr-Sommerfeld submanifold of $X$. Then for any $s\in H^0_{(2)}(X,E_p)$, the following reproducing property holds,
\begin{equation}\label{repL2}
\<s,s_{f,p}\>_p=\int_{\Lambda} \<s(\iota(x)),\iota_p.\zeta^pf(x)\>_{E_p}\ dv_{\Lambda}(y).
\end{equation}

Furthermore, for any $F\in\cinf(X,\End(E))$ with compact support, property \cref{Toeplag} holds.
\end{lem}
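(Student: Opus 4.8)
The plan is to reproduce the proof of \cref{proprepgal} line by line; the only genuinely new point in the complete non-compact setting is the justification of Fubini's theorem, which is where all the analysis lies. Recall that $s_{f,p}\in H^0_{(2)}(X,E_p)$, as noted above: holomorphy in $x$ is immediate from \cref{defLagstateL2} since $P_p(\cdot,\iota(y))$ is holomorphic in its first variable and $\Lambda$ is compact, and Cauchy--Schwarz in the fibre variable together with $|\zeta|_{\iota^*L}\equiv 1$, Tonelli, and the identity $\norm{P_p(\cdot,x')}_p^2=\Tr[P_p(x',x')]$ give
\[
\norm{s_{f,p}}_p^2\le \norm{f}_{L^2(\Lambda)}^2\int_\Lambda \Tr\big[P_p(\iota(y),\iota(y))\big]\,dv_\Lambda(y)<\infty,
\]
the right-hand side being finite because $P_p(\cdot,\cdot)$ is smooth by \cite[Rem.1.4.3]{MM} and $\iota(\Lambda)$ is compact.

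For the reproducing property, I would expand $\<s,s_{f,p}\>_p$ via \cref{L2} and \cref{defLagstateL2} into the double integral over $X\times\Lambda$ of $(x,y)\mapsto\<s(x),P_p(x,\iota(y))\iota_p.\zeta^pf(y)\>_{E_p}$. To interchange the integrals it suffices, by Fubini, to bound the integral of the absolute value: Cauchy--Schwarz in $x$ gives $\int_X |s(x)|_{E_p}\,|P_p(x,\iota(y))|\,dv_X(x)\le\norm{s}_p\,\Tr[P_p(\iota(y),\iota(y))]^{1/2}$, uniformly bounded for $y\in\Lambda$, so multiplying by $|f(y)|_{\iota^*E}$ and integrating over the compact $\Lambda$ yields a finite bound. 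After interchanging, the inner integral $\int_X P_p(\iota(y),x)s(x)\,dv_X(x)$ equals $(P_ps)(\iota(y))=s(\iota(y))$, since $P_p$ is the orthogonal projection onto $H^0_{(2)}(X,E_p)$, so its Schwartz kernel satisfies $P_p(\iota(y),x)=P_p(x,\iota(y))^*$ and $P_ps=s$; \cref{repL2} then follows exactly as \cref{comprep} does in the proof of \cref{proprepgal}.

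For the Toeplitz statement, since $F$ has compact support $K$ and $s_{f,p}\in L^2(X,E_p)$, the section $Fs_{f,p}$ is square integrable with support in $K$, so $T_{F,p}s_{f,p}=P_pFs_{f,p}$ is well defined (using $P_ps_{f,p}=s_{f,p}$). Writing $(T_{F,p}s_{f,p})(x)=\int_K P_p(x,w)F(w)s_{f,p}(w)\,dv_X(w)$ and inserting \cref{defLagstateL2} produces an iterated integral over $w\in K$ and $y\in\Lambda$; Fubini applies because $\int_K |P_p(x,w)|\,|P_p(w,\iota(y))|\,dv_X(w)\le\norm{P_p(x,\cdot)}_{L^2(K)}\,\norm{P_p(\cdot,\iota(y))}_{L^2(K)}$ is finite and uniformly bounded for $y\in\Lambda$, and $\Lambda$ is compact. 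After interchanging, the inner $w$-integral $\int_X P_p(x,w)F(w)P_p(w,\iota(y))\,dv_X(w)$ is precisely the Schwartz kernel $T_{F,p}(x,\iota(y))$ of $P_pFP_p$ — formula \cref{Toepker} remaining valid here since $P_p(x,\cdot)$ and $P_p(\cdot,y)$ are square integrable and $F$ is compactly supported — whence \cref{Toeplag}.

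The main obstacle is thus purely the integrability bookkeeping required for Fubini: once one records that $P_p(\cdot,\iota(y))\in L^2(X,E_p)$ with $L^2$-norm $\Tr[P_p(\iota(y),\iota(y))]^{1/2}$, which is continuous and hence bounded on the compact $\Lambda$, every remaining step is verbatim the compact case treated in \cref{proprepgal}.
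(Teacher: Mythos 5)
Your proof is correct and follows essentially the same route as the paper, which simply observes that the computations \cref{comprep} and \cref{compToeplag} can be repeated verbatim because $\Lambda$ is compact and $F$ has compact support. The only difference is that you make explicit the Fubini/integrability bookkeeping (via $\norm{P_p(\cdot,x')}_p^2=\Tr[P_p(x',x')]$ and compactness of $\Lambda$) that the paper leaves implicit, which is a legitimate filling-in of the same argument.
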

\begin{proof}
As $\Lambda$ is compact, we can repeat the computations of \cref{comprep}, so that \cref{repL2} holds. As $F\in\cinf(X,\End(E))$ has compact support, we can repeat in the same way the computations of \cref{compToeplag}, and \cref{Toeplag} holds as well in this context.
\end{proof}

With these preliminaries, we can state the following generalization of the results of \cref{secasylagstate}, \cref{sectrans} and \cref{secclean}.

\begin{theorem}\label{thL2}
Suppose that $(X,J,\om,g^{TX})$ is a complete Kähler manifold satisfying \cref{hypL2}. If $(\Lambda,\iota,\zeta)$ is a compact Bohr-Sommerfeld submanifold of $(X,\om)$, then \cref{theonorme} holds.

Furthermore, if $(\Lambda_j,\iota_j,\zeta_j),~j=1,2$, are two compact Bohr-Sommerfeld submanifolds of $(X,\om)$ intersecting cleanly, then \cref{theointergal} hold.
\end{theorem}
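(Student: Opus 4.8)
The plan is to observe that the proofs of \cref{theonorme} and \cref{theointergal} given in \cref{seclagstate} and \cref{Lagint} are entirely local: every integration takes place over the compact submanifolds $\Lambda$, $\Lambda_1$, $\Lambda_2$, and every point at which the Bergman kernel is evaluated lies in a fixed compact neighbourhood of $\iota(\Lambda)$ or of the (compact) clean intersection locus $\iota_1(\Lambda_1)\cap\iota_2(\Lambda_2)$. Those proofs rely on exactly three analytic inputs: the reproducing property \cref{rep} (and its Toeplitz version \cref{Toeplag}); the off-diagonal decay \cref{theta} and its corollary \cref{thetagal}; and the near-diagonal expansion \cref{asy}, together with the Toeplitz analogues in \cref{Toepasy}. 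In the present setting, \cref{deflagL2} supplies \cref{rep} and \cref{Toeplag} verbatim for $s\in H^0_{(2)}(X,E_p)$, while \cref{noncompact} supplies \cref{theta}, \cref{thetagal}, \cref{asy} with constants uniform over the relevant compact subsets of $X$ (and over all of $X$ for \cref{asy}), and \cref{Toepasy} for $F$ of compact support. The concentration statement \cref{uopinf} also remains valid, with the same proof, for any \emph{compact} $K$ disjoint from $\iota(\Lambda)$.

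With these inputs available, I would simply re-run the proof of \cref{theonorme}: start from \cref{repnorme}, which now holds by \cref{repL2}; note that the evaluation point $x_0$ then ranges over the compact set $\iota(\Lambda)$, so that the localization \cref{loclagm} follows from \cref{theta} applied to $x_0\in\iota(\Lambda)$ and $\iota(x)\in\Lambda$; and observe that the subsequent steps—rescaling to $B^{\Sigma}(0,\epsilon p^{-\theta/2})$, the Taylor expansion of the volume density, and the Gaussian integrals—do not see compactness of $X$ at all. The resulting coefficients $b_r$ and the leading term \cref{b0norme} are then given by the very same formulas; the Toeplitz refinement follows identically once $F$ is replaced as in the next paragraph.

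Next I would re-run the proof of \cref{theointergal} in the same way: use \cref{repL2} in place of \cref{rep}; invoke \cref{uopinf} with compact $K$ to discard the contributions coming from outside a tubular neighbourhood of the compact intersection locus; localize around a single connected component; and carry out the unchanged Gaussian computations. The only genuinely new point is that \cref{noncompact} gives the expansion \cref{Toepasy} only for compactly supported $F$. This is handled by a cutoff: choosing $\chi\in\cinf(X)$ with compact support and equal to $1$ on a neighbourhood of the intersection locus, the concentration of isotropic states (\cref{uopinf}) shows that $\<T_{F,p}s_{f_1,p},s_{f_2,p}\>_p$ and $\<T_{\chi F,p}s_{f_1,p},s_{f_2,p}\>_p$ differ by $O(p^{-\infty})$, and \cref{noncompact} then applies to $\chi F$.

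The main obstacle is thus not a new estimate but careful bookkeeping: one must verify, at each invocation of \cref{theta}, \cref{thetagal}, \cref{asy} or \cref{Toepasy} in the proofs of \cref{theonorme} and \cref{theointergal}, that all points in play lie in a single fixed compact subset of $X$—which is automatic because the integrations are over the compact $\Lambda_j$ and the evaluations occur on $\iota_j(\Lambda_j)$ or inside a fixed tubular neighbourhood of the compact intersection locus—and that the cutoff reduction above is in force wherever a non-compactly-supported multiplier $F$ appears. Once this is checked, the conclusions of \cref{theonorme} and \cref{theointergal}, including the explicit leading coefficients, hold word for word.
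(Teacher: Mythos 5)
Your argument is, in its main structure, exactly the paper's: the paper likewise observes that every kernel evaluation takes place on the compact set $\iota_1(\Lambda_1)\cup\iota_2(\Lambda_2)$ (or in a fixed tubular neighbourhood of the intersection locus), invokes \cref{noncompact} to get \cref{theta}, \cref{asy} and hence \cref{uopinf} uniformly there, uses \cref{deflagL2} for the reproducing property and for \cref{Toeplag}, and then reruns the proofs of \cref{theonorme}, \cref{theointer} and \cref{theointergal} word for word. For $F=\Id_E$ and for $F$ with compact support this is complete and coincides with the paper's proof.

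The one point where you go beyond the paper is the cutoff reduction for a general $F\in\cinf(X,\End(E))$, and that step has a genuine gap. For unbounded $F$ the operator $T_{F,p}=P_pFP_p$ need not even be well defined on $L^2(X,E_p)$, so the quantity you compare is not defined in general. Even for bounded $F$, the difference is
\begin{equation*}
\langle T_{F,p}s_{f_1,p},s_{f_2,p}\rangle_p-\langle T_{\chi F,p}s_{f_1,p},s_{f_2,p}\rangle_p=\int_X(1-\chi)\,\langle Fs_{f_1,p},s_{f_2,p}\rangle_{E_p}\,dv_X,
\end{equation*}
an integral whose domain contains the whole non-compact end where $1-\chi\equiv1$. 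By \cref{noncompact}, the off-diagonal decay \cref{theta}, hence the concentration \cref{uopinf}, is only available uniformly on compact subsets (as you yourself note), so it gives no pointwise control of $s_{f_j,p}$ near infinity; Cauchy--Schwarz combined with the $L^2$-norm expansion only yields a polynomial bound there, not $O(p^{-\infty})$. Justifying your cutoff would require an extra input, e.g.\ weighted or Agmon-type decay of $P_p(\cdot,\cdot)$ at infinity, which \cref{noncompact} does not provide. This step is also unnecessary for the theorem as the paper intends it: its proof (via the second part of \cref{deflagL2}) and the subsequent \cref{orbith} make explicit that in the non-compact setting the Toeplitz parts of \cref{theonorme} and \cref{theointergal} are asserted only for $F$ of compact support; with that reading, your proof is correct and is the paper's.
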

\begin{proof}
Let $(\Lambda_j,\iota_j,\zeta_j),~j=1,2$, be two compact Bohr-Sommerfeld submanifolds of $X$, and consider $f_j\in\cinf(X,\iota_j^*E),~j=1,2$. By \cref{noncompact}, we know that \cref{uopinf} is still true uniformly in any compact set $K\subset X$. Furthermore, using \cref{defLagstateL2}, \cref{repL2} and omitting the immersions, we get for any $p\in\N^*$,
\begin{equation}\label{repnormeL2}
\begin{split}
\<s_{f_1,p},s_{f_2,p}\>_p & =\int_{\Lambda_2}\<s_{f_1,p}(x),\zeta_2^pf_2(x)\>_{E_p}dv_{\Lambda_2}(x)\\
& =\int_{\Lambda_2}\int_{\Lambda_1}\<P_p(x,y)\zeta_1^pf_1(y),\zeta_2^pf_2(x)\>_{E_p}dv_{\Lambda_1}(y)dv_{\Lambda_2}(x).
\end{split}
\end{equation}
We can then choose the compact set $K$ in \cref{noncompact} to contain $\iota(\Lambda_1)\cup\iota(\Lambda_2)$, and the proof of \cref{thL2} goes along the lines of the proofs of \cref{theonorme}, \cref{theointer} and \cref{theointergal}. By the second part of \cref{deflagL2}, the case of $\<T_{F,p}s_{f_1,p},s_{f_2,p}\>_p$ such that $F\in\cinf(X,\End(E))$ has compact support is strictly analogous.
\end{proof}

\subsection{Orbifold case}\label{secorbi}

In this section, we consider a complete Kähler orbifold
$(X,J,\om,g^{TX})$ satisfying \cref{hypL2}, a proper holomorphic
Hermitian orbifold line $(L,h^L)$ bundle over $X$ with Chern
connection $\nabla^L$ satisfying \cref{preq}, and a proper
holomorphic Hermitian orbifold vector bundle $(E,h^E)$ over $X$
endowed with its Chern connection $\nabla^E$. In order to give a precise meaning to these notions, we first state some notations and definitions from \cite[\S\ 5.4]{MM07}.

\begin{defi}\label{M}
Let $\cal{M}$ be the category whose objects are the pairs $(M,G)$, with $M$ smooth connected manifold and $G$ a finite group acting effectively on $M$, and whose morphisms $\Phi:(M,G)\fl(M',G')$ are families of open embeddings $\varphi:M\fl M'$ satisfying:
\begin{itemize}
\item For each $\varphi\in\Phi$, there is an injective group homomorphism $\lambda_\varphi:G\fl G'$ such that $\varphi$ is $\lambda_\varphi$-equivariant.
\item For $g\in G'$ and $\varphi\in\Phi$, define $g\varphi:M\fl M'$ by $(g\varphi)(x)=g\varphi(x)$ for any $x\in M$. If $(g\varphi)(M)\cap\varphi(M)\neq\0$, then $g\in\lambda_\varphi(G)$.
\item For $\varphi\in\Phi$, we have $\Phi=\{g\varphi~|~g\in G'\}$.
\end{itemize}
\end{defi}

\begin{defi}\label{orbistructdef}
Let $X$ be a paracompact Hausdorff space and let $\cal{U}_X$ be a covering of $X$ consisting of connected open subsets, satisfying the condition
\begin{equation}\label{filter}
\begin{split}
&\text{For any $U,U'\in\cal{U}_X$ and $x\in U\cap U'$,}\\
&\text{there is $U''\in\cal{U}_X$ such that $x\in U''\subset U\cap U'$.}
\end{split}
\end{equation}
An \emph{orbifold structure} $\cal{V}_X$ on $X$ consists of the following datas:

\begin{itemize}
\item For any $U\in\cal{U}_X$, an object $(G_U,\til{U})$ of $\cal{M}$ and a ramified covering $\tau_U:\widetilde{U}\fl U$ which is $G_U$-invariant and induces a homeomorphism $U\simeq\ \til{U}/G_U$.
\item  For any $U,~V\in\cal{U}_X$ such that $U\subset V$, a morphism $\Phi_{VU}:(G_U,\til{U})\fl (G_V,\til{V})$ of $\cal{M}$, which covers the inclusion $U\subset V$ and satisfies $\Phi_{WU}=\Phi_{WV}\circ\Phi_{VU}$ for any $U,~V,~W\in\cal{U}_X$, with $U\subset V\subset W$.
\end{itemize}

If $\cal{U}_X'$ is a refinement of $\cal{U}_X$ satisfying the condition \cref{filter}, then there is an orbifold structure $\cal{V}_X'$ associated with $\cal{U}_X'$ such that $\cal{V}_X\cup\cal{V}_X'$ is again an orbifold structure. We then say that $\cal{V}_X$ and $\cal{V}_X'$ are \emph{equivalent}. An equivalence class is called an \emph{orbifold structure} on $X$. In particular, we can suppose that $\cal{U}_X$ is arbitrarily fine. In the sequel, we will always consider the unique maximal representative in the equivalence class. 
\end{defi}

In the above definitions, we can replace the objects of $\cal{M}$ by manifolds with specified structures together with a group preserving these structures, and morphisms preserving these structures. In the case in hand, by structure we mean an orientation, a Riemannian metric, a symplectic structure, an almost-complex structure or a complex structure. Furthermore, we can realise Cartesian products of orbifolds in the obvious way.

Let $(X,\cal{V}_X)$ be an orbifold. For each $x\in X$, up to refinement of $\cal{V}_X$, there exists $U_x\in\cal{U}_X$ containing $x$ and $\til{x}\in\til{U}$, $\tau_U(\til{x})=x$, such that $\til{x}$ is a fixed point of $G_U$. Then by the second axiom of \cref{M}, such a group is unique up to isomorphism, and we denote it by $G^X_x$. If $|G_x^X|=1$, then $X$ has a smooth structure in a neighbourhood of $x$, and we call such an $x$ a \emph{smooth point} of $X$. If $|G_x^X|>1$, we call such an $x$ a \emph{singular point} of $X$. We denote $X_{sing}=\{x\in X~|~|G_x^X|>1\}$ the \emph{singular set} of $X$, and $X_{reg}=\{x\in X~|~|G_x^X|=1\}$ the \emph{regular set} of $X$. In the sequel, we always denote by $\til{x}\in\til{U}$ a lift of $x\in U\in\UU_X$. 

%

The next definitions are adaptations of the notions of
orbifold embedding and submersion from \cite[Def.\ 1.6,\ Def.\ 1.7]{Ma05}.

\begin{defi}\label{orbimap}
An \emph{orbifold immersion} $I:(Y,\cal{V}_Y)\fl(X,\cal{V}_X)$ is a continuous map $\iota:Y\fl X$, such that for any $V\in\UU_X$ and any $U\in\UU_Y$ connected component of $\iota^{-1}(V)$, there is a family $I_{UV}$ of immersions $\iota_{UV}:\til{U}\fl\til{V}$ covering $\iota$ together with surjective group homomorphisms $\lambda_{UV}:G_V\fl G_U$ such that $\iota_{UV}$ is $\lambda_{UV}$-equivariant. Furthermore, the families $I_{UV}$ satisfy $I_{UV}=\{g\iota_{UV}~|~g\in G_U\}$ and are compatible with the orbifold structures in the obvious sense.
In that case, we define the \emph{stabilizer} of $V$ in $U$ by $K_{UV}=\Ker\lambda_{UV}$. Then $m_{X,Y}:=|K_{UV}|$ is locally constant on $Y$, and is called the \emph{relative multiplicity} on $Y$.

A \emph{singular immersion} $\hat{I}$ from a smooth manifold $Y$ to an orbifold $(X,\cal{V}_X)$ is a continuous map $\iota:Y\fl X$, together with immersions $\til{\iota}_V:U\fl\til{V}$ covering $\iota$ for any $V\in\cal{U}_X$, such that $g.\iota(U)$ intersects $\iota(U)$ cleanly in the sense of \cref{cleandef} for all $g\in G_V$. In that case, we define the stabilizer of $U$ in $V$ by the subgroup $K_{UV}\subset G_V$ fixing each point of $\til\iota_V(U)$. Then the \emph{relative multiplicity} $m_{X,Y}=|K_{UV}|$ is again locally constant on $Y$.

An \emph{orbifold submersion} $P:(M,\cal{V}_M)\fl(X,\cal{V}_X)$ is a continuous map $\pi:M\fl X$ such that $\pi(U)\in\cal{U}_X$ for any $U\in\cal{U}_M$, together with submersions $\pi_U:\til{U}\fl\til{\pi(U)}$ covering $\pi$ and surjective group homomorphisms $\lambda_U:G_U\fl G_{\pi(U)}$ for any $U\in\cal{U}_X$ making $\pi_U$ be $\lambda_U$-equivariant. Furthermore, we assume compatibility with the orbifold structures in the obvious sense.
\end{defi}

Note that any $x\in X$ can be seen as an immersed orbifold with $m_{X,x}=|G_x|$. In both definitions of an immersion above, if $\iota^{-1}(X_{sing})$ has strictly positive measure for the density induced by any Riemannian metric, then $G_V$ fixes $\iota(U)$ and $m_{X,Y}$ is strictly positive. The intersection of two orbifold immersions is still defined as in \cref{cleandef} to be their fibred product over $X$, which gets a natural orbifold structure making all maps into orbifold immersions.

Finally, note that we can easily combine the definitions above to get the notion of a singular orbifold immersion, and the results of this section hold in this case as well. For simplicity and clarity, we will keep both notions separated from each other.

\begin{defi}
An \emph{orbifold vector bundle} is an orbifold submersion $P:(E,\cal{V}_E)\fl(X,\cal{V}_X)$ such that $E_U:=\pi^{-1}(U)$ belongs to $\cal{U}_E$ for any $U\in\cal{U}_X$ and $\pi_{E_U}:\til{E}_U\fl\til{U}$ are $G_{E_U}$-equivariant vector bundles. Furthermore, we ask the inclusions $\Phi_{E_VE_U}$ covering $\Phi_{VU}$ to be equivariant
vector bundle maps, for any $U,~V\in\cal{U}_X$ such that $U\subset V$.

If $G_{E_U}$ acts effectively on $\til{U}$ for all $U\in\cal{U}_X$, that is the group morphisms $\lambda_{E_U}:G_{E_U}\fl G_U$ associated with $P$ as in \cref{orbimap} are isomorphisms, we say that $E$ is \emph{proper}.
\end{defi}

We can then define the proper tangent orbifold bundle $TX$ and the proper cotangent orbifold bundle $T^*X$ over any orbifold $(X,\cal{V}_X)$ in the obvious way. We can as well form tensor products of vector bundles by taking the tensor products locally over each orbifold chart, and we check easily that this operation preserves properness. If $E$ is a proper orbifold bundle over $X$ and if $\Psi:(X,\cal{V}_X)\fl(Y,\cal{V}_Y)$ is any of the orbifold maps of \cref{orbimap}, we can pullback $E$ to $Y$ by $\Psi$ in the obvious way, and we write $\Psi^*E$ for the pullback orbifold vector bundle, which is still proper.

We define a distance on $X$ for any $x, y\in X$ by
\begin{multline}
d(x,y)= \inf_{\gamma}\Big\{\sum_j\int_{t_{j-1}}^{t_{j+1}} |\dt\til{\gamma}_j(t)| dt~\Big|~\gamma:[0,1]\fl X,~\gamma(0)=x,~\gamma(1)=y,\\
\text{such that there exists}\ t_0=0<t_1<\dots<t_k=1,~\gamma([t_{j-1},t_j])\subset U_j,\\
U_j\in\cal{U}_X,~\text{and a smooth map}\ \til{\gamma}_j:[t_{j-1},t_j]\fl\til{U}_j\ \text{that covers}\ \gamma|_{[t_{j-1},t_j]} \Big\}.
\end{multline}
Let $E\fl X$ be an orbifold vector bundle. An orbifold section $s:X\fl E$ is called \emph{smooth} if for each $U\in\cal{U}_X$, the restriction of $s$ to $U$ is covered by a $G^E_U$-equivariant smooth section $\til{s}_U:\til{U}\fl\til{E}_U$. In the same way, if $X$ is a complex orbifold and $E$ is a holomorphic orbifold vector bundle, we say $s$ is holomorphic if it is locally covered by holomorphic sections. The space of smooth (resp. holomorphic) sections of $E$ is denoted by $\cinf(X,E)$ (resp. $H^0(X,E)$).

If $X$ is oriented and $\alpha$ is a smooth section of the exterior product orbifold bundle $\Lambda(T^*X)$ with support in $U\in\cal{U}$, we define
\begin{equation}\label{orbiint}
\int_X\alpha=\frac{1}{|G_U|}\int_{\til{U}}\til{\alpha}_U,
\end{equation}
where $\til{\alpha}_U$ is an invariant section covering $\alpha$ over $\til{U}$. We extend this definition for general $\alpha$ using a partition of unity. In particular, if $X$ is oriented and Riemannian, there is an induced Riemannian volume form $dv_X$ on $X$, so that we can integrate functions.

Let now $(X,J,\om)$ be a Kähler orbifold. As we can verify locally, for any Hermitian holomorphic proper orbifold bundle over $X$, its Chern connection is well-defined and unique. Let $(L,h^L)$ be a
proper holomorphic Hermitian orbifold line bundle over $X$,
such that its
Chern connection satisfies \cref{preq}. We write $g^{TX}$ for the Riemannian metric on $X$ satisfying \cref{Jgras}, and $dv_X$ for the associated Riemannian volume form. Let $(E,h^E)$ be an auxiliary
proper holomorphic Hermitian orbifold vector bundle over $X$.

We define the $L^2$-Hermitian product associated with all the previous datas on $\cinf(X,E_p)$ by the formula \cref{L2}, and the \emph{Bergman kernel} $P_p(\cdot,\cdot)\in\cinf(X\times X,E_p\boxtimes E_p^*)$ is the Schwartz kernel with respect to $dv_X$ of the orthogonal projection $P_p$ from $\cinf(X,E_p)$ to $H^0_{(2)}(X,E_p)$ as in \cref{ker}. For any $V\in\cal{U}_X$ and all $p\in\N^*$, let $\til{P}_p(\cdot,\cdot)\in\cinf(\til{V}\times\til{V},\til{E}_{p,V}\boxtimes\til{E}_{p,V}^*)$ be the $G_V\times G_V$-invariant lift of $P_p(\cdot,\cdot)\in\cinf(V\times V,E_p\boxtimes E_p^*)$. More generally, for any object on $V\in\cal{U}_X$, we add a superscript $~\til{}~$ to denote the corresponding object on $\til{V}$.

For any $m\in\N$, let $|\cdot|_{\CC^m}$ denote the local
$\CC^m$-norm on local sections of $E_p\boxtimes E_p^*$ over $X\times X$ induced by $h^L,~h^E$ and $\nabla^L,~\nabla^E$. The following result is the version of \cref{asy} for orbifolds. It uses the fact, noticed in \cite{Ma05}, that the finite propagation speed of the wave equation holds on orbifolds.

\begin{prop}{\cite[\S~6.2]{MM08b},\cite[Rem.\ 5.4.12b)]{MM07}}\label{fdpprop}
\cref{theta} holds in the case of $(X,J,\om,g^{TX})$ complete Kähler orbifold satisfying \cref{hypL2}. Moreover, for any $V\in\cal{U}_X$, there exists a section $F(\til{D}_p)(\cdot,\cdot)\in\cinf(\til{V}\times\til{V},\til{E}_{p,V}\boxtimes\til{E}_{p,V}^*)$ satisfying the following properties:

For any $\til{x},~\til{y}\in\til{V}$ and $g\in G_V$,
\begin{equation}
(g,1)F(\til{D}_p)(g^{-1}\til{x},\til{y})=(1,g^{-1})F(\til{D}_p)(\til{x},g\til{y}).
\end{equation}

For any $m,~l\in\N$, there is $C_{m,l}>0$ such that for any $\til{x},~\til{y}\in\til{V}$ and all $p\in\N^*$,
\begin{equation}\label{fdp}
|\til{P}_p(\til{x},\til{y})-\sum_{g\in G_U}(1,g^{-1})F(\til{D}_p)(\til{x},g\til{y})|_{\CC^m}\leq C_{m,l} p^{-l}.
\end{equation}

$F(\til{D}_p)(\cdot,\cdot)$ satisfies the expansion of \cref{asy} at any $x_0\in\til V$.

\end{prop}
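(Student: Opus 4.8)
The plan is to transport the finite-propagation-speed proof of \cref{theta} to the orbifold setting, the only genuinely new ingredient being the finite propagation speed of the wave equation on orbifolds established in \cite{Ma05}. Let $\Box_p$ be the Kodaira Laplacian on $\cinf(X,E_p)$ (a multiple of \cref{deltalpe} in this Kähler setting), set $D_p=\sqrt{\Box_p}$, so that $P_p$ is the orthogonal projection onto $\Ker D_p=H^0_{(2)}(X,E_p)$, and let $\til D_p$ be the lift of $D_p$ to the uniformizing chart $\til V$ over $V\in\UU_X$. By \cite[Th.~5.2]{MM08b} we have, under \cref{hypL2}, a spectral gap $\Spec(D_p)\subset\{0\}\cup[c\sqrt p-C,+\infty[$ for some $c,C>0$. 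Fix $\theta\in\,]0,1[$ and an even function $F\in\SS(\R)$ with $F(0)=1$ whose Fourier transform $\widehat F$ is supported in $]-1,1[$. The spectral gap forces the restriction of $F(p^{-\theta/2}D_p)$ to $(\Ker D_p)^\perp$ to have operator norm $O(p^{-\infty})$, and standard elliptic estimates upgrade this to $|P_p(x,x')-F(p^{-\theta/2}D_p)(x,x')|_{\CC^m}=O(p^{-\infty})$ uniformly. On the other hand, writing $F(p^{-\theta/2}D_p)=\tfrac1{2\pi}\int_\R\widehat F(t)\,\cos(tp^{-\theta/2}D_p)\,dt$ and invoking finite propagation speed on the orbifold $X$, the Schwartz kernel of $F(p^{-\theta/2}D_p)$ vanishes for $d^X(x,x')>Cp^{-\theta/2}$; combining the two facts gives \cref{theta} for the orbifold Bergman kernel.

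Next I would set $F(\til D_p)(\til x,\til y):=F(p^{-\theta/2}\til D_p)(\til x,\til y)$: by finite propagation speed this is well defined independently of any completion used to realize $\til D_p$ as a self-adjoint operator, is smooth on $\til V\times\til V$, is supported in $d(\til x,\til y)\le Cp^{-\theta/2}$, and near each point coincides with the spectral function $F(p^{-\theta/2}D_p)$ of a complete Kähler manifold extending the local data. Since $\til D_p$ is $G_V$-equivariant, its kernel satisfies $F(\til D_p)(g\til x,g\til y)=(g,g)F(\til D_p)(\til x,\til y)$ for $g\in G_V$; replacing $\til x$ by $g^{-1}\til x$ and applying $(1,g^{-1})$ gives the stated equivariance relation. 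To identify $\til P_p$, I would use the method of images: because $D_p$ on $X$ lifts to $\til D_p$, expressing the action of $F(p^{-\theta/2}D_p)$ on a $G_V$-invariant section over $\til V$ as an integral over a fundamental domain for $G_V$ shows that the $(G_V\times G_V)$-invariant lift of the kernel $F(p^{-\theta/2}D_p)(x,y)$ on $V\times V$ is exactly $\sum_{g\in G_V}(1,g^{-1})F(\til D_p)(\til x,g\til y)$. Combined with the uniform estimate $|P_p-F(p^{-\theta/2}D_p)|_{\CC^m}=O(p^{-\infty})$ from the first paragraph, this yields \cref{fdp}. Finally, at any $\til x_0\in\til V$ and in a chart $\phi_{\til x_0}$ as in \cref{defphi}, $F(\til D_p)$ near $\til x_0$ agrees modulo $O(p^{-\infty})$ with the Bergman kernel of a complete Kähler manifold satisfying \cref{hypL2}, so the near-diagonal expansion of \cref{asy}, available by \cref{noncompact}, holds for $F(\til D_p)$ at $\til x_0$.

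The main obstacle is the book-keeping in the method-of-images step: one must verify carefully that the orbifold spectral calculus of $D_p$ produces precisely the symmetrized sum $\sum_{g\in G_V}(1,g^{-1})F(\til D_p)(\til x,g\til y)$ — in particular that the factor $dv_{\til V}=|G_V|\,dv_V$ relating the volume over $\til V$ to that over a fundamental domain cancels correctly, and that the fixed-point locus of $G_V$ in $\til V$, which has measure zero, causes no trouble — and that every error term is uniform in $x_0$. One should also observe that at a singular point $\til x_0$ the kernel $F(\til D_p)$ still falls under the ordinary smooth near-diagonal theory, since $\til V$ is an honest manifold and the finite group only intervenes when passing back to $\til P_p$. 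Everything else is a direct transcription of the compact argument once finite propagation speed on orbifolds (\cite{Ma05}) and the spectral gap (\cite[Th.~5.2]{MM08b}) are granted.
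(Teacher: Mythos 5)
Your argument is correct and is essentially the proof of the cited sources: the paper itself does not reprove \cref{fdpprop} but refers to \cite[\S~6.2]{MM08b} and \cite[Rem.~5.4.12]{MM}, whose argument is exactly the one you reconstruct — spectral gap plus functional calculus with an even Schwartz function having compactly supported Fourier transform, finite propagation speed on orbifolds from \cite{Ma05} to localize and to define $F(\til{D}_p)$ on the chart independently of any self-adjoint extension, the method of images over a fundamental domain for $G_V$ (which also settles your volume-normalization worry, since the orbifold integral \cref{orbiint} is the integral over a fundamental domain), and comparison with the Bergman kernel of a complete extension satisfying \cref{hypL2} to get the expansion of \cref{asy}. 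The only cosmetic point is that the sum in \cref{fdp} should run over $G_V$ (the $G_U$ there is a typo), which your method-of-images computation in fact produces.
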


With all these prerequisites in hand, \cref{BS} still makes sense in this context replacing the immersion $\iota$ by an orbifold immersion or singular immersion $I$ as in \cref{orbimap}. In the second case, we talk about a \emph{singular} Bohr-Sommerfeld submanifold. In any case, if $\Lambda$ is compact, the associated isotropic state as in \cref{defLagstate} is well defined and \cref{proprepgal} still holds. We will use the additivity property \cref{add} to assume that the section $f$ of \cref{Lagstate} has compact support in some given open set $U\in\cal{U}_{\Lambda}$.

\begin{theorem}\label{orbith}
Let $(X,J,\om,g^{TX})$ be a complete Kähler orbifold satisfying \cref{Jgras}, let $(L,h^L)$ be a holomorphic Hermitian proper orbifold line bundle such that the curvature of its Chern connection satisfies \cref{preq}, and let $(E,h^E)$ be a holomorphic Hermitian proper orbifold vector bundle. Suppose that $(X,J,\om,g^{TX})$ satisfies \cref{hypL2}.

If $(\Lambda,I,\zeta)$ is a compact Bohr-Sommerfeld submanifold of $X$ and if the endomorphism $F\in\cinf(X,\End(E))$ has compact support, then \cref{theonorme} holds, with the following formula for the first coefficient of \cref{Toepb0norme},
\begin{equation}\label{orbib0norme}
b_0=2^{d/2}m_{X,\Lambda}\int_{\Lambda}\<Ff,f\>_{\iota^*E}dv_{\Lambda}.
\end{equation}
%

If $(\Lambda_j,I_j,\zeta_j),~j=1,2$, are two compact Bohr-Sommerfeld submanifolds of $X$ intersecting cleanly and if $F\in\cinf(X,\End(E))$ has compact support, then the expansion of \cref{theointergal} holds. If $\dim\Lambda_1=n$, then the first coefficients $b_{q,0}$ of \cref{<u1u2>} satisfy the formula \cref{bmj0} multiplied by
\begin{equation}\label{orbibm0}
m_{X,\Lambda_2}/m_{\Lambda_1,Y_q}.
\end{equation}

Finally, the above holds for compact singular Bohr-Sommerfeld submanifolds of $X$, provided their intersection locus is away from the singular set.
\end{theorem}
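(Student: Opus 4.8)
The plan is to deduce everything from the smooth statements \cref{theonorme} and \cref{theointergal} by working in orbifold charts and invoking \cref{fdpprop}. First, exactly as in the proofs of \cref{theonorme} and \cref{theointergal}, I would use the reproducing property \cref{rep} (still available here, since \cref{proprepgal} holds in the orbifold setting) to write $\norm{s_{f,p}}_p^2$ (resp. $\<s_{f_1,p},s_{f_2,p}\>_p$) as an integral over $\Lambda$ (resp. over the fibred product $\Lambda_1\cap\Lambda_2$) of the Bergman kernel against $\zeta^pf$ (resp. against $\zeta_1^pf_1$ and $\zeta_2^pf_2$). The orbifold version of \cref{theta} contained in \cref{fdpprop}, together with \cref{uopinf} and the additivity \cref{add}, then lets me localise: it suffices to treat $f$ (resp. $f_1,f_2$) supported in a single arbitrarily small chart $U\in\UU_\Lambda$ (resp. $U_j\in\UU_{\Lambda_j}$) over one point $x_0$ of $\iota(\Lambda)$ (resp. of $\iota_1(\Lambda_1)\cap\iota_2(\Lambda_2)$), and to fix a chart $V\in\UU_X$ around $x_0$ with $G_V=G^X_{x_0}$.

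Next I would lift the whole computation to the manifold covers $\til U,\til V$ (resp. $\til U_j,\til V$). By the definition \cref{orbiint} of the orbifold integral, passing from $\Lambda$ to $\til U$ introduces a factor $|G_U|^{-1}$ per slot, hence $|G_U|^{-2}$ for $\norm{s_{f,p}}_p^2$ and $|G_{U_1}|^{-1}|G_{U_2}|^{-1}$ for $\<s_{f_1,p},s_{f_2,p}\>_p$; and in $\til V$ I would replace $\til{P}_p(\cdot,\cdot)$ by $\sum_{g\in G_V}(1,g^{-1})F(\til{D}_p)(\cdot,g\,\cdot)$ up to $O(p^{-\infty})$ using \cref{fdpprop}. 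Because each $\iota_{U_jV}$ is $\lambda_{U_jV}$-equivariant with $\lambda_{U_jV}\colon G_V\to G_{U_j}$ surjective, one has $g\cdot\iota_{U_jV}(\til U_j)=\iota_{U_jV}(\til U_j)$ for every $g\in G_V$, so the Gaussian concentration of $F(\til{D}_p)$ keeps every term at the same power of $p$; moreover, changing variables $\til y\mapsto\lambda_{U_1V}(g)\til y$ and using the $G_{U_1}$-equivariance of $\til\zeta_1,\til f_1$ together with the compatibility of the $G_V$- and $G_{U_j}$-actions on $E_p$ (which cancels the twist $(1,g^{-1})$) shows that each of the $|G_V|$ terms equals the $g=\Id$ term. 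The $g=\Id$ term is, up to the above normalising factor, literally the quantity estimated in the proof of \cref{theonorme} (resp. \cref{theointergal}) for the isotropic (resp. cleanly intersecting) smooth submanifolds $\til U\subset\til V$ (resp. $\til U_j\subset\til V$), with $F(\til{D}_p)$ in the role of $P_p$ — which is legitimate because \cref{fdpprop}, via finite propagation speed, grants $F(\til{D}_p)$ both the near-diagonal expansion of \cref{asy} and the off-diagonal decay of \cref{theta}. Running those proofs verbatim therefore yields the expansions of \cref{theonorme} and \cref{theointergal}, with leading coefficient equal to $|G_V|$ times the smooth one divided by the normalising factor. Since $g^{TX}=g^{TX}_\om$ here, the smooth leading coefficient simplifies as in \cref{b0norme}; expressing $|G_V|$ through $m_{X,\Lambda}=|G_V|/|G_U|$ for the norm, and through the orders of the isotropy groups of $\Lambda_1$, $\Lambda_2$, $X$ and $Y_q$ along $Y_q$ for the intersection, the weights collapse to the multiplicities \cref{orbib0norme}, resp. the factor \cref{orbibm0}.

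For the last assertion, if two compact \emph{singular} Bohr--Sommerfeld submanifolds intersect away from $X_{sing}$, then after the localisation above the only part of $X$ that enters is a neighbourhood of $\iota_1(\Lambda_1)\cap\iota_2(\Lambda_2)$, which is a genuine smooth manifold; so $G_V=\{\Id\}$ in the relevant charts, the sum over $G_V$ reduces to $g=\Id$, and \cref{fdpprop} gives $\til{P}_p=F(\til{D}_p)+O(p^{-\infty})$ with the smooth near-diagonal expansion. The several sheets through a point of the intersection allowed by a singular immersion are precisely the connected components of the fibred product already accounted for in \cref{theointergal}, so the conclusion is that of \cref{theointergal} verbatim, the relative multiplicities being $1$ along $Y_q$.

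I expect the genuine obstacle to be the bookkeeping in the second paragraph: first, verifying carefully that the $g\neq\Id$ terms reduce exactly to the $g=\Id$ term, so that no spurious contributions at the same power of $p$ survive — here the equivariance of the orbifold immersions, giving $g\,\til\Lambda_j=\til\Lambda_j$, is exactly what makes it work; and second, identifying the orbifold structure of the fibred product $\Lambda_1\cap\Lambda_2$ in the chart and matching the orders $|G_V|$, $|G_{U_1}|$, $|G_{U_2}|$ with the isotropy of $Y_q$ so that the product of the $|G|^{-1}$-weights from \cref{orbiint} with the $|G_V|$-fold count from \cref{fdpprop} reproduces precisely the factor \cref{orbibm0}.
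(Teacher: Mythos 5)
For the orbifold-immersion part of the statement your argument is essentially the paper's own proof: localise by \cref{uopinf} and the additivity property, lift to charts $\til U\subset\til V$, replace $\til P_p$ by the $G_V$-sum of $F(\til D_p)$ via \cref{fdpprop}, collapse the sum using the $\lambda_{UV}$-equivariance of the lifted immersion and of $\til\zeta,\til f$, and then run the proofs of \cref{theonorme} and \cref{theointergal} verbatim with $F(\til D_p)$ in place of $P_p$; the $|G_U|^{-1}$ weights from \cref{orbiint} combined with the $|G_V|$-fold count indeed produce $m_{X,\Lambda}=|G_V|/|G_U|$ and the ratio of multiplicities in \cref{orbibm0}. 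That bookkeeping is exactly what is done in the paper.

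There is, however, a genuine gap in your treatment of the \emph{singular} Bohr--Sommerfeld submanifolds. The hypothesis only puts the \emph{intersection locus of the two submanifolds} away from $X_{sing}$; a single singular Bohr--Sommerfeld submanifold is allowed to pass through singular points of $X$ (this is the whole point of the notion, cf.\ \cref{BSgeod} and \cref{thBScurve}, where closed geodesics through elliptic fixed points are treated), and the statement claims the norm expansion \cref{norme} with leading coefficient \cref{orbib0norme} for such a $\Lambda$. Your argument that ``$G_V=\{\Id\}$ in the relevant charts'' only covers the intersection product of two distinct submanifolds; for the norm, the reproducing-property integral runs over all of $\Lambda$, including the part mapping into $X_{sing}$, where $G_V$ is nontrivial. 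Moreover, for a singular immersion the lift $\til\iota_V$ is a single immersion of a manifold and is \emph{not} $G_V$-equivariant, so the equivariance trick that collapses the $G_V$-sum in your second paragraph is unavailable: the terms with $g\neq\Id$ involve the distinct immersed submanifolds $g\til\iota_V(U)$ and $\til\iota_V(U)$, which by \cref{orbimap} only intersect cleanly. The paper handles exactly this point: each cross term is estimated by applying \cref{theointergal} to the clean intersection of $\til\iota_V$ with $g\til\iota_V$, which shows it contributes at strictly lower order in $p$; and since near $X_{sing}$ the leading density is only identified up to these local group contributions, the coefficient $b_0$ is recovered by a limiting argument over a shrinking sequence of neighbourhoods $U_n\supset\iota^{-1}(X_{sing})$ of measure tending to zero (using that $\iota^{-1}(X_{sing})$ has measure zero), as in \cref{singrep} and \cref{bm0sing}. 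Without this step your proposal does not establish the expansion, nor the formula for $b_0$, for singular Bohr--Sommerfeld submanifolds meeting the singular set, which is precisely the case needed for the application to relative Poincar\'e series.
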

\begin{proof}

Let $(\Lambda,I,\zeta)$ be a compact Bohr-Sommerfeld submanifold, and let $f\in\cinf(\Lambda,I^*E)$ have compact support in a sufficiently small open set $U\in\cal{U}_{\Lambda}$, connected component of $\iota^{-1}(V)$ for some $V\in\cal{U}_X$. Then using \cref{orbiint} and \cref{fdp}, for any $\til{x}\in\til{V}$ we have as $p\fl+\infty$,
\begin{equation}\label{orbirep}
\begin{split}
\til{s}_{f,p}(\til{x}) & =\frac{1}{|G_U|}\int_{\til{U}} \til{P}_p(\til{x},\iota_{UV}(\til{y}))\iota_{p,UV}.\til{f}\til{\zeta}^p(\til{y})dv_{\til{U}}(\til{y})\\
& =\frac{1}{|G_U|}\int_{\til{U}} \sum_{g\in G_V}(1,g^{-1})F(\til{D}_p)(\til{x},g\iota_{UV}(\til{y}))\iota_{p,UV}.\til{f}\til{\zeta}^p(\til{y})dv_{\til{U}}(\til{y})+O(p^{-\infty})\\
& =\frac{1}{|G_U|}\int_{\til{U}} \sum_{g\in G_V}F(\til{D}_p)(\til{x},\iota_{UV}(\til{y}))\iota_{p,UV}.(g.\til{f}\til{\zeta}^p(g^{-1}\til{y}))dv_{\til{U}}(\til{y})+O(p^{-\infty})\\
& =\frac{|G_V|}{|G_U|}\int_{\til{U}} F(\til{D}_p)(\til{x},\iota_{UV}(\til{y}))\iota_{p,UV}.\til{f}\til{\zeta}^p(\til{y})dv_{\til{U}}(\til{y})+O(p^{-\infty}).
\end{split}
\end{equation}
Here $\iota_{UV}:\til{U}\fl\til{V}$ is any member of the family of maps in $I_{UV}$. Now by \cref{orbimap}, we have $|G_V|/|G_U|=m_{X,\Lambda}$. By \cref{fdpprop}, $F(\til{D}_p)(\cdot,\cdot)$ satisfies the expansion of \cref{asy} at any $x_0\in\til V$, so that we can follow the proof of \cref{theonorme} to deduce from \cref{orbirep} an asymptotic expansion as $p\fl+\infty$ of the form \cref{norme} for the norm of $s_{f,p}$, with highest coefficient given by \cref{orbib0norme} in the case $F=\Id_E$.

For any $j=1,2$, let $(\Lambda_j,I_j,\zeta_j)$ be compact Bohr-Sommerfeld submanifolds and let $f_j\in\cinf(\Lambda,I^*E)$ have compact support in a sufficiently small open set $U_j\in\cal{U}_{\Lambda}$, connected component of $\iota^{-1}(V)$ for some $V\in\cal{U}_X$. Then as the reproducing property \cref{rep} still holds, analogous to \cref{loclagm1}, \cref{orbirep}, using \cref{orbiint}, \cref{fdp}, and omiting the immersion maps, we have
as $p\fl+\infty$,
\begin{multline}\label{loclagorbi}
\<s_{1,p},s_{2,p}\>_p =\frac{1}{|G_{U_2}|}\int_{\til{U}_2} \<\til{\zeta}_{f_1,p}(\til{x}),\til{\zeta}_2^p\til{f}_2(\til{x})\>_{E_p} dv_{\til{U}_2}(\til{x})\\
=\frac{1}{|G_{U_1}|}\frac{1}{|G_{U_2}|}\int_{\til{U}_2}\int_{\til{U}_1} \<\til{P}_p(\til{x},\til{y})\til{\zeta}_1^p\til{f}_1(\til{y}),\til{\zeta}_2^p\til{f}_2(\til{x})\>_{E_p}dv_{\til{U}_1}(\til{y})dv_{\til{U}_2}(\til{x})\\
=\frac{|G_V|}{|G_{U_1}||G_{U_2}|}\int_{\til{U}_2}\int_{\til{U}_1} \<F(\til{D}_p)(\til{x},\til{y})\til{\zeta}_1^p\til{f}_1(\til{y}),\til{\zeta}_2^p\til{f}_2(\til{x})\>_{E_p}dv_{\til{U}_1}(\til{y})dv_{\til{U}_2}(\til{x})\\
+O(p^{-\infty}).
\end{multline}
By \cref{orbimap}, we have $m_{X,\Lambda_2}=|G_V|/|G_{U_2}|$, and then $m_{\Lambda_1,y}=|G_y^{\Lambda_1}|=|G_{U_1}|$ for $U_1$ small enough. In the case of discrete intersection, we take $y\in\iota_2^{-1}(\iota_1(\Lambda_1)\cap\iota_2(\Lambda_2))$ and $V\in\UU_X$ to be a small enough neighbourhood of $\iota_1(y)\in X$ to get \cref{orbibm0} in the case $F=\Id_E$ and discrete intersection.

Recall \cref{cleandef}. Let now $\til{W}$ be the lift of some open set $W\in\UU_{Y}$, where is $Y$ the connected component of $\Lambda_1\cap\Lambda_2$ such that its image by $j_1$ intersects the support of $f_1$, and set $l=\dim Y$. In the case of clean intersection, we can follow the proof of \cref{theointergal} until \cref{u1u2sumgal} to get an asymptotic expansion of the form \cref{<u1u2>}, and get from \cref{loclagorbi} a sequence $b_r\in\cinf(Y,\C),~r\in\N$ such that
as $p\fl+\infty$,
\begin{equation}\label{orbiprodcomput}
\begin{split}
\<s_{1,p},s_{2,p}\>_p & =\frac{|G_V|}{|G_{U_1}||G_{U_2}|}p^{\frac{l}{2}}\lambda^p\sum_{r=0}^k p^{-r/2}\int_{\til{W}} \til{b}_r(\til{x})dv_{\til{W}}(\til{x})+O(p^{\frac{l}{2}-\frac{k+1}{2}})\\
& =\frac{|G_V|}{|G_{U_2}|}\frac{|G_W|}{|G_{U_1}|}p^{\frac{l}{2}}\lambda^p\sum_{r=0}^k p^{-r/2}\int_{W} b_r(x)dv_Y(x)+O(p^{\frac{l}{2}-\frac{k+1}{2}})\\
& =\frac{m_{X,\Lambda_2}}{m_{\Lambda_1,Y}}p^{\frac{l}{2}}\lambda^p\sum_{r=0}^k p^{-r/2}\int_{W} b_r(x)dv_Y(x)+O(p^{\frac{l}{2}-\frac{k+1}{2}}).
\end{split}
\end{equation}

We can then go on to the proof of \cref{theointergal} to get \cref{orbibm0} in the case $F=\Id_E$. Now for the general case, if $F\in\cinf(X,\End(E))$ has compact support, we can define its Berezin-Toeplitz quantization by \cref{Toep}, and it is showed in \cite[Lemma~6.10]{MM08b} that it satisfies \cref{Toepasy} as well. Furthermore, the formula \cref{Toeplag} holds in the same way.

Finally, let us consider the case of singular Bohr-Sommerfeld submanifolds. Following \cref{orbirep}-\cref{orbiprodcomput}, it suffices to prove the case $m_{X,Y}=1$, and as we assumed the intersection locus away from the singular set, we need only to prove the analogue of \cref{norme}, and suppose that $f$ has compact support in some $U\in\cal{U}_X$. 

First recall that the reproducing property gives
\begin{multline}\label{singrep}
\norm{s_{f,p}}_p^2 =\int_\Lambda\<s_{f,p}(\iota(x)),\iota_p.\zeta^pf(x)\>_{E_p}dv_\Lambda(x)\\
=\int_{U}\int_{U} \<\til{P}_p(\til{\iota}_V(x),\til{\iota}_V(y))\til{\iota}_p.\til{\zeta}^p\til{f}(y),\til{\iota}_p.\til{\zeta}^p\til{f}(x)\>_{E_p}dv_\Lambda(y)dv_\Lambda(x)\\
=\sum_{g\in G_V}\int_{U}\int_{U} \<F(\til{D}_p)(\til{\iota}_V(x),g\til{\iota}_V(y))g.\til{\iota}_p.\til{\zeta}^p\til{f}(y),\til{\iota}_p.\til{\zeta}^p\til{f}(x)\>_{E_p}dv_\Lambda(y)dv_\Lambda(x)\\
+O(p^{-\infty}).
\end{multline}
Now, as $G_V$ acts on $\til{V}$ preserving all the structures and by \cref{orbimap}, the immersion $g\til{\iota}_V$ is an isotropic immersion intersecting $\til{\iota}_V$ cleanly, for any $g\in G_V$. As $F(\til{D}_p)(\cdot,\cdot)$ satisfies the expansion of \cref{asy}, we can then apply \cref{theointergal} to compute each term of the last line of \cref{singrep}. We then have an asymptotic expansion of the form \cref{Toepb0norme}.

To compute the first order term, note that if $g\til{\iota}_V$ and $\til{\iota}_V$ do not coincide, the highest order of the corresponding expansion \cref{Toepb0norme} is strictly smaller than $n/2$. Thus we need only to consider the subgroup of $G_V$ fixing the image of $\iota$, which contains at least the identity element of $G_V$. Summing the contributions of all the elements of this subgroup and by \cref{bm0}, we get a function $b_U\in\cinf(U,\C)$, depending on $f$ only locally, such that the highest order term of \cref{singrep} is given by integration of $b_U$ along $U$. Now, as $\iota^{-1}(X_{sing})$ is of measure $0$, we can pick a sequence $U_n\subset U,~n\in\N$, of open sets in $\cal{U}_\Lambda$ containing $\iota^{-1}(X_{sing})$ and whose measure tends to $0$. We can then repeat \cref{singrep} replacing $U$ by $U_n$ and use \cref{orbib0norme} on the regular part of $V$ to get the following formula for the highest order term, for all $n\in\N$,
\begin{equation}\label{bm0sing}
b_0=2^{d/2}\int_{\Lambda\backslash U_n}\<Ff(x),f(x)\>_{\iota^*E}\,dv_{\Lambda}(x)+\int_{U_n}b_U(x)\,dv_\Lambda(x).
\end{equation}
As the second term can be made arbitrarily small, we can take the limit of \cref{bm0sing} at $n$ tends to infinity, so that formula \cref{bm0} holds for singular Bohr-Sommerfeld submanifolds.
\end{proof}

\section{Application to relative Poincaré series}\label{appli}

In this Section, we apply the results of the previous section
in the case of quotients of the hyperbolic plane $\IH$
by a discrete subgroup $\Gamma$ of $\text{SL}_2(\R)$.
In that case, the Bergman kernel admits
an explicit global formula given in \cref{BergH} as a
sum over $\Gamma$, realizing it as a \emph{Poincaré series}.
In \cref{RPSgal}, we show that the isotropic states associated
with remarkable curves over $\IH/\Gamma$ can then
be expressed as \emph{relative Poincaré
series}, where the sum is over a quotient of $\Gamma$ instead.
The main result of this section is \cref{thBScurve}, which is an
explicit version of \cref{introtheonorme} in this setting,
and which shows that such relative Poincaré series do not
vanish as soon as their weight as \emph{holomorphic
cusp forms} is large enough.

Recall that the special linear group 
\begin{equation}
\text{SL}_2(\R)=\bigg\{g=
\begin{pmatrix} 
a & b \\
c & d 
\end{pmatrix}
~\bigg|~a,b,c,d\in\R,~ad-bc=1 \bigg\}
\end{equation}
acts on the Poincaré upper-half plane $\IH=\left\{z=x+\sqrt{-1}y\in\C~\big|~y>0\right\}$ by the formula
\begin{equation}
g.z=\frac{az+b}{cz+d}.
\end{equation}
The induced action of $g$ on the canonical holomorphic vector field
$\partial/\partial z$ over $\IH$ is given by $g.\partial/\partial z
=(cz+d)^{-2}\partial/\partial z$, so that the dual action on the
canonical line bundle
$K_\IH=T^{*(1,0)}\IH$ over $\IH$ is given on the canonical
section $dz$ by
\begin{equation}\label{actcan}
g.dz=(cz+d)^{2}dz=:j(g,z)^{2}dz.
\end{equation}
Let $g^{T\IH}$ be the \emph{hyperbolic metric} on $\IH$, 
which is defined by the formula
\begin{equation}
g^{T\IH}=\frac{dx^2+dy^2}{y^2},
\end{equation}
so that it is invariant by the action of $\text{SL}_2(\R)$.
The associated Kähler metric $\om_\IH$ satisfies
\begin{equation}
\om_\IH=\frac{\sqrt{-1}}{2}\frac{dz\wedge d\bar{z}}{y^2}.
\end{equation}
Let us write $|\cdot|_{K_\IH}$ for the $\text{SL}_2(\R)$-invariant
Hermitian norm on $K_\IH$ given by
\begin{equation}\label{metKH}
|dz|_{K_\IH}^2=y^2.
\end{equation}
Note that it differs from the norm induced by $g^{T\IH}$ from
a constant factor $\sqrt{2}$.
Then the curvature $R^{K_{\IH}}$ of the Chern connection of
$(K_{\IH},h^{K_{\IH}})$ satisfies $\sqrt{-1}R^{K_{\IH}}=\om_{\IH}$,
so that $R^{K_{\IH}}$ satisfies the condition \cref{preq} for the
renormalized Kähler form $\om_\IH/2\pi$. As $R^{det}=-R^{K_\IH}$
is proportional to $\sqrt{-1}\om_\IH$,
it is easily seen that $K_\IH$ satisfies \cref{hypL2}.

Now if $\Gamma$ is a discrete subgroup of $\text{SL}_2(\R)$, the quotient $X:=\IH/\Gamma$ has an induced structure of a Kähler orbifold, and its canonical line bundle $K_X$ is the quotient of $K_\IH$ by the induced action \cref{actcan}. We denote $g^{TX}$ and $\om_X$ for the quotient metric and quotient Kähler form on $X$ respectively,
and we endow $K_X$ with the Hermitian metric $h^{K_X}$ induced
by \cref{metKH}.
Then \cref{preq} holds for $K_X$ up to a factor $2\pi$ as
above, and it satisfies \cref{hypL2} as well. Therefore,
setting $L=K_X$ and $E=\C$, we are precisely in the context of the
previous sections for the renormalized Kähler form
$\om=\om_{X}/2\pi$, with $g^{TX}_\om=g^{TX}/2\pi$.

Recall that a smooth path $\gamma:[0,l]\fl X,~l>0,$ is said to be a \emph{closed loop} if it induces a (singular) immersion $\til{\gamma}:S^1\fl X$ by identification of $0$ with $l$. The following lemma describes the class of (singular) Bohr-Sommerfeld submanifolds we will be interested in.

\begin{lem}\label{BScurve}
For $l>0$, let $\gamma:[0,l]\fl X$ be a closed loop in $X$ parametrized by arclength with respect to $g^{TX}$, and suppose that the holonomy of $K_X$ along $\gamma$ with respect to $\nabla^{K_\IH}$ is trivial. Then the immersion $\til{\gamma}:S^1\fl X$, obtained from $\gamma$ by identification of $0$ and $l$, satisfies the Bohr-Sommerfeld condition of \cref{BS}.
\end{lem}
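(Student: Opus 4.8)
The plan is to check the three requirements of \cref{BS} for the immersion $\iota=\til\gamma:S^1\fl X$. Two of them are immediate. Since $S^1$ is compact the map $\til\gamma$ is automatically proper, and we orient $S^1$ by the direction of the parametrization. Isotropy costs nothing: $\om$ is a $2$-form while $\dim S^1=1$, so $\til\gamma^*\om=0$ identically. (The hypothesis that $\gamma$ be parametrized by arclength is not used in this lemma; it only serves later to make $\Vol(S^1)=l$ enter the asymptotic expansions of \cref{theonorme} and \cref{theointergal}.) In particular, by \cref{preq} — exactly as in the observation preceding \cref{BS} — the pullback connection $\nabla^{\til\gamma^*K_X}$ on the (honest, since $S^1$ is smooth) line bundle $\til\gamma^*K_X$ over $S^1$ is flat.

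It remains to produce a non-vanishing $\zeta\in\cinf(S^1,\til\gamma^*K_X)$ with $\nabla^{\til\gamma^*K_X}\zeta=0$, which we may then rescale to unit norm. A flat Hermitian line bundle over $S^1$ admits such a parallel section precisely when its holonomy is trivial, and this is guaranteed by hypothesis: the holonomy of $\nabla^{\til\gamma^*K_X}$ around $S^1$ equals the holonomy of $(K_X,\nabla^{K_\IH})$ along the loop $\gamma$, since parallel transport is natural under pullback. Concretely, fixing $\theta_0\in S^1$ and a unit vector $v_0\in(\til\gamma^*K_X)_{\theta_0}$, one sets $\zeta(\theta)$ to be the parallel transport of $v_0$ along $S^1$ from $\theta_0$ to $\theta$: this is single-valued because the holonomy is trivial, smooth because $\nabla^{K_X}$ is smooth, parallel by construction, and of constant norm because $\nabla^{K_X}$ is Hermitian. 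Then $(S^1,\til\gamma,\zeta)$ satisfies — indeed realizes — the Bohr-Sommerfeld condition of \cref{BS}.

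There is no real obstacle here; the argument is essentially a matter of unwinding the definitions. The two small points to state carefully are the identification of the holonomy of the pullback connection over $S^1$ with the holonomy of $K_X$ along $\gamma$ appearing in the hypothesis, and the fact that the whole construction is insensitive to whether $\til\gamma$ is an embedding or only a (singular) immersion — the pullback bundle $\til\gamma^*K_X$ and its flat connection are defined verbatim in either case, so the conclusion follows the same way.
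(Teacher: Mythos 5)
Your proof is correct and follows essentially the same route as the paper: isotropy is automatic since $\om$ is a $2$-form on a surface, and the Bohr-Sommerfeld condition reduces to triviality of the holonomy of the flat pullback connection, which is the hypothesis. The only difference is cosmetic: the paper cites \cref{hol} for the equivalence between the Bohr-Sommerfeld condition and trivial holonomy, whereas you re-derive it directly by constructing the parallel section via parallel transport.
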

\begin{proof}
As $\om_X$ is a $2$-form, any smooth map $f:S^1\fl X$ satisfies $f^*\om=0$. Thus as $\dim X=2$, any immersion $\iota:S^1\fl X$ is Lagrangian. By \cref{hol}, it satisfies the Bohr-Sommerfeld condition if and only if the holonomy of the pullback connection is trivial, which is exactly the hypothesis of \cref{BScurve} by \cref{hol}.
\end{proof}

In any case, such a path $\gamma:[0,l]\fl X,~l>0,$ is called a \emph{Bohr-Sommerfeld curve}. The orientation on $\til{\gamma}:S^1\fl X$ is determined by the canonical vector field $\partial_t$
on $[0,l]$. Following \cref{hol}, if $\gamma:[0,l]\fl X,~l>0,$ is a smooth closed loop such that its holonomy is a $k$-th root of unity for some $k\in\N$, we can take a cover of degree $k$ of this loop to get a Bohr-Sommerfeld curve $\gamma_k:[0,kl]\fl X$.

Note that as $X$ is a complex orbifold with $\dim_\C X=1$ and as $\Gamma$ acts on $\IH$ holomorphically, the singular set $X_{sing}$ is necessarily a discrete set. By \cref{orbimap} and as $S^1$ is a manifold, the stabilizer of $\til{\gamma}$ is then necessarily trivial in any case.

\begin{cor}\label{BSgeod}
A closed geodesic loop $\gamma:[0,l]\fl X,~l>0$, parametrized by arclength, is a Bohr-Sommerfeld curve.
\end{cor}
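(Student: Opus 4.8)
The plan is to deduce \cref{BSgeod} from \cref{BScurve}: it suffices to show that the holonomy of $K_X$ along $\gamma$ with respect to its Chern connection is trivial. First I would note that, since $\Gamma$ acts on $\IH$ by holomorphic isometries, the Chern connection $\nabla^{K_\IH}$ is $\Gamma$-invariant and descends to the Chern connection of $K_X$ over $X=\IH/\Gamma$; hence this is precisely the holonomy appearing in \cref{BScurve}, and I may either work directly on $X$ in orbifold charts or lift everything to $\IH$.

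The key point is that along any geodesic the canonical bundle carries a parallel nonvanishing section. In complex dimension one we have $K_X=T^{*(1,0)}X=(T^{(1,0)}X)^*$, and since $(X,\om_X,J)$ is Kähler (automatic in real dimension two, $d\om_X$ being a $3$-form on a surface), the Levi-Civita connection $\nabla^{TX}$ preserves $J$, hence the splitting $TX_\C=T^{(1,0)}X\oplus T^{(0,1)}X$, and restricts on $T^{(1,0)}X$ to its Chern connection; dually it induces the Chern connection on $K_X$. Now let $\gamma:[0,l]\fl X$ be parametrized by arclength with respect to $g^{TX}$. Along $\gamma$ the real orthonormal frame $(e_1,e_2):=(\dot\gamma,J\dot\gamma)$ of $TX$ is parallel, because $\nabla^{TX}_{\dot\gamma}\dot\gamma=0$ by the geodesic equation and $\nabla^{TX}J=0$; hence the induced frame of $T^{(1,0)}X$ and its dual frame $\alpha$ of $K_X$ — a multiple of $e^1\pm\sqrt{-1}\,e^2$ in the dual coframe, the sign depending only on orientation conventions and thus immaterial — is a parallel section of $K_X$ along $\gamma$, of constant unit norm for the metric induced by $g^{TX}$.

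To conclude, I would use that a closed geodesic loop induces a \emph{smooth} geodesic immersion $\til\gamma:S^1\fl X$, so that $\dot\gamma(l)=\dot\gamma(0)$ as tangent vectors at $\gamma(0)=\gamma(l)$, whence $\alpha(l)=\alpha(0)\neq0$ in $K_{\gamma(0)}X$. Since parallel transport along $\gamma$ sends $\alpha(0)$ to $\alpha(l)=\alpha(0)$ and $K_X$ has rank one, the holonomy of $K_X$ along $\gamma$ is the identity. (Equivalently, lifting $\gamma$ to a geodesic segment of $\IH$ from $z_0$ to $g z_0$ with $g\in\Gamma$ the hyperbolic element translating along the corresponding geodesic line, the isometry $dg$ sends the frame $(e_1,e_2)$ at $z_0$ to the one at $g z_0$ — it commutes with $J$ and preserves that line with its arclength parametrization — so it intertwines $\alpha(0)$ with $\alpha(l)$, cancelling the parallel transport along the segment.) By \cref{BScurve}, $\gamma$ is then a Bohr-Sommerfeld curve.

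There is no serious obstacle here; the only points requiring a little care are the descent of the Chern connection through the orbifold quotient $\IH/\Gamma$ (valid since $\Gamma$ acts by holomorphic isometries), and the fact that a closed geodesic loop — being smooth and geodesic as a map out of $S^1$ — has matching velocity at its endpoints, without which the frame $(\dot\gamma,J\dot\gamma)$ would fail to close up and $\alpha$ would not be defined globally along the loop. If $\gamma$ meets the singular set of $X$, one runs the same computation in an equivariant orbifold chart, or upstairs on $\IH$ as above.
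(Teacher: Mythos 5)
Your proposal is correct and takes essentially the same route as the paper: along a geodesic parametrized by arclength the unit tangent vector is parallel, and since the Levi-Civita connection is compatible with $J$ and induces $\nabla^{K_X}$ via the splitting \cref{splitc}, the metric dual of its $(0,1)$-part (your dual coframe $\alpha$) is a parallel unit section of $K_X$ along the closed loop, which is exactly the Bohr-Sommerfeld condition, or equivalently trivial holonomy as in \cref{BScurve}. The only point the paper makes explicit that your orbifold remark leaves implicit is the verification that a closed geodesic through a singular point of $X=\IH/\Gamma$ is a \emph{singular immersion} in the sense of \cref{orbimap}: the local group $G_z^X$ acts by isometries, so it sends the lifted geodesic to geodesics through the same point, which by uniqueness of geodesics either coincide with it or meet it transversally, hence cleanly; this check is needed for $\til{\gamma}$ to qualify as a (singular) Bohr-Sommerfeld submanifold, but it is a minor addition rather than a different argument.
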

\begin{proof}
Recall that $K_X=T^{*(1,0)}X$ is equipped with the Hermitian metric and connection $h^{K_X},\nabla^{K_X}$ induced by $g^{TX},\nabla^{TX}$ via \cref{splitc}. For any $t\in[0,l]$, let $\dot\gamma_t\in T_{\gamma(t)}X$ denote the vector tangent to the curve $\gamma:[0,l]\fl X$, inducing $\dot\gamma^{(0,1)}_t\in T^{(0,1)}X$ via \cref{splitc}. We write $\dot\gamma_t^{(0,1),*}\in K_{X,\gamma(t)}$ for its metric dual. As $\gamma:[0,l]\fl X$ is geodesic, we know that $\nabla^{TX}_{\dot\gamma}\dot\gamma=0$, so that $\nabla^{K_X}_{\dot\gamma}\dot\gamma^{(0,1),*}=0$, which means precisely that $\til{\gamma}:S^1\fl X$ satisfies the Bohr-Sommerfeld condition with associated section $\gamma^{(0,1),*}\in\cinf(S^1,\til{\gamma}^*K_X)$.

Now if $X$ is an orbifold and if $z\in X$ is a singular point of $X$, then its associated group $G_z^X$ preserves the Riemannian structure, and sends a geodesic through $z$ to another geodesic through $z$, which intersect transversally by unicity of the geodesics. Thus $\gamma:[0,l]\fl X$ satisfies the definition of a singular immersion as in \cref{orbimap}.
\end{proof}

Let $\gamma:[0,l]\fl X,~l>0,$ be a Bohr-Sommerfeld curve together with a unitary flat section $\zeta\in\cinf([0,l],\gamma^* K_X)$, inducing a (possibly singular) Bohr-Sommerfeld submanifold $(S^1,\til{\gamma},\zeta)$ as above. For any $p\in\N^*$, we define $s_{\gamma,p}\in H^0_{(2)}(X,K_X^p)$ by
\begin{equation}\label{BScurvestate}
s_{\gamma,p}(x)=\int_0^l P_p^X(x,\gamma(t))\gamma_p.\zeta^p(t) dt,
\end{equation}
for any $x\in X$, where $P_p^X(\cdot,\cdot)$ is the Bergman kernel with respect to $dv_X$ of the orthogonal projection on $H^0_{(2)}(X,K_X^p)$. Then $s_{\gamma,p}$ is precisely the Lagrangian state associated with $(S^1,\til{\gamma},\zeta)$ and $f=1$, in the sense of \cref{Lagstate}.

We can then apply \cref{thL2} and \cref{orbith} to get the following specialisation of \cref{b0norme} and \cref{bmj0}, where we adopt the convention that $\sqrt{-a}=\sqrt{-1}\sqrt{a}$ if $a>0$.

\begin{theorem}\label{thBScurve}
Let $\gamma:[0,l]\fl X,~l>0,$ be a Bohr-Sommerfeld curve, and let $\{s_{\gamma,p}\}_{p\in\N^*}$ be as in \cref{BScurvestate}. Then
\begin{equation}\label{BScurvenorme}
\norm{s_{\gamma,p}}_{L^2}^2=\left(\frac{p}{\pi}\right)^{1/2}l+O(p^{-1/2}).
\end{equation}

Furthermore, if $\gamma_1$ and $\gamma_2$ are two Bohr-Sommerfeld curves intersecting cleanly away from the singular set, we get
\begin{equation}\label{BScurveinter}
\<s_{\gamma_1,p},s_{\gamma_2,p}\>=\sqrt{2}\sum_{z\in\gamma_1\cap\gamma_2}\sum_{\substack{t_1,t_2>0,\\
\gamma_1(t_1)=\gamma_2(t_2)=z}}\lambda^p_{t_1,t_2}\frac{e^{\sqrt{-1}(\theta_z/2-\pi/4)}}{\sqrt{\sin(\theta_z)}}+O(p^{-1}),
\end{equation}
where $\theta_z\in\ ]0,2\pi[$ is the oriented angle from $\gamma_1$ to $\gamma_2$ at $z$ and where for all $t_1, t_2>0$ such that $\gamma_1(t_1)=\gamma_2(t_2)$, we define $\lambda_{t_1,t_2}=\<\zeta_1(t_1),\zeta_2(t_2)\>_{K_X}$.
\end{theorem}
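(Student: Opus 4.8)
The plan is to deduce both formulas from \cref{orbith}---which subsumes \cref{thL2}, \cref{theonorme} and \cref{theointergal}---applied with $L=K_X$, $E=\C$, $f\equiv 1$ and $F=\Id_E$, in the setting fixed above: the symplectic form is $\om=\om_X/2\pi$, the $J$-invariant metric is the compatible one $g^{TX}_\om=g^{TX}/2\pi$ (as in \cref{secnoncpctorbi}), and \cref{hypL2} holds for $K_X$. First I would record the numerology: since $\dim_\C X=1$ we have $n=1$ in the notation of \cref{setting}; every immersed loop $\til\gamma:S^1\fl X$ is automatically Lagrangian of dimension $d=1=n$; and, $S^1$ being a genuine manifold, all relative multiplicities appearing in \cref{orbith} equal $1$, both for an orbifold immersion and for a singular immersion with intersection locus disjoint from $X_{sing}$ (the observation made just before \cref{BSgeod}). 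Moreover, since the ambient metric here satisfies $g^{TX}=g^{TX}_\om$, the ``$\om$''-decorations all trivialise: $\dot R^{K_X}=2\pi\,\Id$, so $\det(\dot R^{K_X}/2\pi)=1$, and $dv_\Lambda=dv_{\Lambda,\om}$ on any submanifold.

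For \cref{BScurvenorme} I would invoke the norm part of \cref{orbith}: with $d=n=1$ it gives $\norm{s_{\gamma,p}}^2_{L^2}=p^{1/2}\big(b_0+O(p^{-1})\big)$, with, by \cref{b0norme} in the Lagrangian case, $b_0=2^{1/2}\int_{S^1}dv_{S^1,\om}$. The only computation is that $\gamma$ is parametrized by arclength for $g^{TX}$ while the ambient metric is $g^{TX}_\om=g^{TX}/2\pi$, so the $g^{TX}_\om$-length of the loop is $l/\sqrt{2\pi}$ and $b_0=2^{1/2}\,l/\sqrt{2\pi}=l/\sqrt\pi$, whence $(p/\pi)^{1/2}l+O(p^{-1/2})$.

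For \cref{BScurveinter} I would observe that with $\dim\Lambda_1=\dim\Lambda_2=n=1$ a clean intersection with discrete (rather than $1$-dimensional) locus is precisely a transverse one, so we are in the situation of \cref{theointer}, transported to the orbifold setting by \cref{orbith}: the fibred product $\Lambda_1\cap\Lambda_2$ is the finite set of pairs $(t_1,t_2)$ with $\gamma_1(t_1)=\gamma_2(t_2)$, which I regroup according to their common value $z\in\gamma_1\cap\gamma_2$; the power of $p$ in \cref{u1u2} is $p^{n-(d_1+d_2)/2}=p^0$; the odd-order coefficients vanish; and the $\lambda_q$ there is the $\lambda_{t_1,t_2}$ of the statement. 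It then remains to evaluate $b_{(t_1,t_2),0}$ from \cref{bmj0} with $n=d_2=1$. With $F=\Id$ and $f_j\equiv1$ the scalar factor $\<Ff_1,f_2\>$ is $1$, and by the trivialisation above \cref{bmj0} collapses to $b_{(t_1,t_2),0}=2^{1/2}\det{}^{-1/2}\!\big(\sqrt{-1}\,h^{TX}_\om(e_1,\nu_1)\,\om(e_1,\nu_1)\big)$, a $1\times1$ determinant, where $e_1,\nu_1$ are the positively oriented $g^{TX}_\om$-unit tangent vectors of $\gamma_1,\gamma_2$ at $z$. Writing $\nu_1=\cos\theta_z\,e_1+\sin\theta_z\,Je_1$, with $\theta_z\in[0,2\pi[$ the oriented angle from $\gamma_1$ to $\gamma_2$ (independent of the metric rescaling), and using $\om(e_1,Je_1)=g^{TX}_\om(e_1,e_1)=1$, one gets $\om(e_1,\nu_1)=\sin\theta_z$ and $h^{TX}_\om(e_1,\nu_1)=\cos\theta_z-\sqrt{-1}\sin\theta_z=e^{-\sqrt{-1}\theta_z}$, so the matrix entry is $\sqrt{-1}\,e^{-\sqrt{-1}\theta_z}\sin\theta_z=\sin\theta_z\,e^{\sqrt{-1}(\pi/2-\theta_z)}=\sin^2\theta_z+\sqrt{-1}\sin\theta_z\cos\theta_z$, whose real part $\sin^2\theta_z$ is positive; hence the determination of \cref{continuation} is the principal square root and, with the convention $\sqrt{-a}=\sqrt{-1}\sqrt a$ for $a>0$, this gives $\det{}^{-1/2}(\cdot)=(\sin\theta_z)^{-1/2}e^{\sqrt{-1}(\theta_z/2-\pi/4)}$. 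Thus $b_{(t_1,t_2),0}=\sqrt2\,e^{\sqrt{-1}(\theta_z/2-\pi/4)}/\sqrt{\sin\theta_z}$, and summing over $z$ and over $(t_1,t_2)$ yields \cref{BScurveinter}.

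All steps being specialisations of results already established, I expect the only genuinely delicate points to be (i) keeping track of the $1/\sqrt{2\pi}$ rescaling of the metric, and (ii) pinning down the branch of the square root in \cref{bmj0}, in particular for obtuse angles, where $\sin\theta_z<0$ and the convention $\sqrt{-a}=\sqrt{-1}\sqrt a$ is exactly what makes the stated formula agree with the principal determination. A convenient consistency check is that the rescaling enters the leading norm coefficient through the length $l$ but must cancel in the leading intersection coefficient, since the latter depends only on $\theta_z$.
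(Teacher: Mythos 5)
Your proposal is correct and follows essentially the same route as the paper: specialise \cref{theonorme} and \cref{theointer,theointergal} (through \cref{thL2,orbith}, with all relative multiplicities equal to $1$) and evaluate the $1\times1$ determinant of \cref{bmj0}, whose branch analysis, including the obtuse-angle case via the convention $\sqrt{-a}=\sqrt{-1}\sqrt{a}$, you carry out in the detail the paper dismisses as standard. The one step you leave implicit---that the $\om$-normalised theorems may be applied directly to $s_{\gamma,p}$ of \cref{BScurvestate}, which is built from $P_p^X$ (kernel with respect to $dv_X$) and $g^{TX}$-arclength---is exactly what the paper records in \cref{uv}: the $\om$-metric state satisfies $s_{\om,\gamma,p}=\sqrt{2\pi}\,s_{\gamma,p}$ while $dv_{X,\om}=dv_X/2\pi$, so the two rescalings cancel in the $L^2$-pairing and only the $\om$-length $l/\sqrt{2\pi}$ survives, which is precisely the cancellation your consistency check asserts.
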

\begin{proof}
In the case $X$ smooth and compact, \cref{BScurvenorme,BScurveinter} are standard computations from \cref{b0,bmj0}. We will indicate how to modify directly the argument to get the case $g^{TX}=2\pi g^{TX}_\om$ from the case $g^{TX}=g^{TX}_\om$ in all generality.

For any $p\in\N^*$, let us write $P_{p,\om}$ for the orthogonal projection to $H^0_{(2)}(X,K_X^p)$ with respect to the $L^2$-Hermitian product induced by $g^{TX}_\om$. Then $P_{p,\om}=P_p^X$, but $dv_{X,\om}=dv_X/2\pi$, so that the associated Bergman kernel with respect to $dv_{X,\om}$ satisfies $P_{p,\om}(\cdot,\cdot)=2\pi P_p^X(\cdot,\cdot)$. On the other hand, the Riemannian volume form $dt_\om$ on $[0,L[$ induced by $g^{TX}_\om$ satisfies $dt_\om=dt/\sqrt{2\pi}$. Thus, writing $\{s_{\om,\gamma,p}\}_{p\in\N^*}$ for the Lagrangian state obtained replacing $g^{TX}$ by $g^{TX}_\om$, we get from \cref{defLagstate} that $s_{\om,\gamma,p}=\sqrt{2\pi}s_{\gamma,p}$ for any $p\in\N^*$. 

Consider now two Bohr-Sommerfeld curves $\gamma_1$ and $\gamma_2$. Following the above notations, we get for any $p\in\N^*$,
\begin{equation}\label{uv}
\<s_{\gamma_1,p},s_{\gamma_2,p}\>_p =\frac{1}{2\pi}\int_X \<s_{\om,\gamma_1,p},s_{\om,\gamma_2,p}\>_{K_X^p} dv_X=\<s_{\om,\gamma_1,p},s_{\om,\gamma_2,p}\>_{\om,p},
\end{equation}
where $\<\cdot,\cdot\>_{\om,p}$ denote the $L^2$-Hermitian product with respect to $g^{TX}_\om$. Noticing finally that $\Vol_\om(\gamma)=l/\sqrt{2\pi}$ for any $\gamma:[0,l]\fl X,~l>0$ parametrized by arclength with respect to $g^{TX}$, we recover \cref{BScurvenorme,BScurveinter} as in the case of $X$ smooth and compact.
\end{proof}

In the case where $X$ is a compact Riemann surface, so that in
particular $\Gamma$ acts freely on $\IH$, \cref{thBScurve} is the
result of \cite[Th.~4.4]{BPU95}, where they show \cref{BScurvenorme}
and \cref{BScurveinter} with a weaker error term. As shown in
\cref{RPSgal}, formulas \cref{BScurvenorme} and \cref{BScurveinter} 
are especially interesting in the case of curves $\gamma:\R\fl\IH$ 
such that there exists $l>0,~g_0\in\Gamma$ satisfying $g_0.\gamma(t)=
\gamma(t+l)$ for any $t\in\R$. We say that $\gamma$ is 
\emph{associated with} $g_0$.

In particular, if $\gamma$ is a closed geodesic, then $\gamma$ is associated with an \emph{hyperbolic} element $g_0\in\Gamma$, that is satisfying $\Tr(g_0)>2$, unique up to conjugation. Closed geodesics belong to a larger class of hyperbolic curves called \emph{hypercycles}.

If $g_0\in\Gamma$ is \emph{parabolic}, that is satisfying $\Tr(g_0)=2$, then its action has no fixed points in $\IH$, and it occurs in $\Gamma$ only in the case of $X$ non-compact. The most interesting associated curves in that case are the so-called \emph{horocycles}, which are isometric to a horizontal line in $\IH$.

If $g_0\in\Gamma$ is \emph{elliptic}, that is satisfying $\Tr(g_0)<2$, then $g_0$ fixes a unique point $z\in\IH$, which descends to a singular point of $X$. The most interesting associated curves in that case are circles with center the fixed point of $g_0$ in $\IH$. Note that $\Gamma$ acts freely on $\IH$ if and only if it contains no elliptic elements.

%

Our next goal is to identify explicitly the Lagrangian states
associated with such curves. Let $\FF$ be a measurable fundamental
domain of $\Gamma$ in $\IH$. Through the natural identification
$\cinf(X,K_X)\simeq\cinf(\IH,K_\IH)^\Gamma$ and trivializing
$K_\IH$ using its canonical section $dz$, we have from
\cref{actcan} and for any $p\in\N^*$ the following natural
identification,
\begin{multline}\label{S2p}
H^0_{(2)}(X,K_X^p)\simeq\big\{f\in\cinf(\IH)~\big|~f\ \text{holomorphic},\\
f(g.z)=f(z)j(g,z)^{2p},\ \int_{\FF} |f(z)|^2y^{2p-2}dxdy<\infty\big\}.
\end{multline}
This identification will be used implicitly throughout the rest of this section.

\begin{rem}\label{cuspform}
Assume $\Vol(X)<+\infty$, that is $\Gamma$ is a \emph{Fuchsian group of the first kind}. As explained in \cite[\S\ 6]{AMM16a,AMM16},
the space $H^0_{(2)}(X,K_X^p)$ is then identified through the identification \cref{S2p} with the space $\text{S}_{2p}(\Gamma)$ of
\emph{holomorphic cusp forms of weight $2p$}, the space of holomorphic functions on $\IH$ satisfying the equivariance property of \cref{S2p} and vanishing at infinity. Such spaces are of particular interest in arithmetic.
\end{rem}

The following result is classical and follows for instance from \cite[Prop.~I.5.3, II.1]{Frei90}.

\begin{prop}\label{BergH}
Under the identifications above, for any $p\in\N^*$,
the Bergman kernel of $H^0_{(2)}(\IH,K_\IH^p)$ satisfies the formula
\begin{equation}\label{fleBergH}
P_p^\IH(z,w)=\frac{2p-1}{4\pi}\left(\frac{2\sqrt{-1}}
{z-\overline{w}}\right)^{2p}dz^p d\overline{w}^p,
\end{equation}
for any $z,w\in\IH$, where
$d\overline{w}\in \overbar{K}_{\IH,w}\simeq K^*_{\IH,w}$ denotes the
metric dual of $dw\in K_{\IH,w}$. Furthermore, for any
$\til{w}\in\IH$ descending to $w\in X$ in the quotient, we have
\begin{equation}\label{berginv}
P_p^X(z,w)=\sum_{g\in\Gamma} j(g,z)^{-2p}P^\IH_p(g.z,\til{w}),
\end{equation}
through the identification \cref{S2p} in $z\in\IH$,
where the convergence of the right-hand side is absolute and
uniform for $z,\,\til{w}$ in any compact set of $\IH$.
\end{prop}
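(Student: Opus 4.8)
The plan is to treat the two displayed formulas separately. For \cref{fleBergH}, I would first use the identification \cref{S2p} and the global holomorphic frame $dz$ of $K_\IH$ (for which $|dz|_{K_\IH}=y$) to identify $H^0_{(2)}(\IH,K_\IH^p)$ with the weighted Bergman space of holomorphic $f$ on $\IH$ with $\int_\IH|f(z)|^2y^{2p-2}\,dx\,dy<\infty$, equipped with the inner product $\int_\IH f\overline g\,y^{2p-2}\,dx\,dy$, so that $P_p^\IH(z,w)=B_p(z,w)\,dz^p\,d\overline w^p$ with $B_p$ the reproducing kernel of this space. To compute $B_p$ one can either pull back by the Cayley transform to the unit disk, where the weighted Bergman kernel is classical, or argue directly: writing $f(z)=\int_0^\infty\widehat f(\xi)e^{\sqrt{-1}z\xi}\,d\xi$ (Paley--Wiener), Plancherel in the $x$-variable together with $\int_0^\infty e^{-2y\xi}y^{2p-2}\,dy=\Gamma(2p-1)(2\xi)^{-(2p-1)}$ diagonalises the norm, whence $B_p(z,w)=\frac{2^{2p-1}}{2\pi\Gamma(2p-1)}\int_0^\infty\xi^{2p-1}e^{\sqrt{-1}(z-\overline w)\xi}\,d\xi$; evaluating this Gamma integral (the exponent $-\sqrt{-1}(z-\overline w)$ has positive real part) and simplifying $(-\sqrt{-1})^{-2p}=(-1)^p$ and $\Gamma(2p)=(2p-1)\Gamma(2p-1)$ produces exactly the stated formula.

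For \cref{berginv}, the plan is the classical automorphisation argument. Through \cref{S2p}, $H^0_{(2)}(X,K_X^p)$ is the space of $\Gamma$-automorphic holomorphic sections of $K_\IH^p$ that are $L^2$ over a measurable fundamental domain $\FF$, and $P_p^X$ lifts to a kernel on $\IH\times\IH$ with the corresponding $(\Gamma\times\Gamma)$-equivariance. I would set $\widetilde P(z,w)=\sum_{g\in\Gamma}P_p^\IH(z,g\cdot w)j(g,w)^{2p}$ (in the frame $dz^p$) and check three things. First, using the cocycle relation $j(gh,w)=j(g,hw)j(h,w)$ and the $SL_2(\R)$-equivariance $P_p^\IH(gz,gw)=P_p^\IH(z,w)j(g,z)^{2p}\overline{j(g,w)}^{2p}$ of the upstairs kernel (which follows from $gz-\overline{gw}=(z-\overline w)/(j(g,z)\overline{j(g,w)})$), the reindexing $g\mapsto gh$ shows that $\widetilde P$ descends to a smooth section of $K_X^p\boxtimes(K_X^p)^*$ over $X\times X$, holomorphic in $z$ and anti-holomorphic in $w$. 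Second, the reproducing property is obtained by unfolding: for $s=f\,dw^p\in H^0_{(2)}(X,K_X^p)$,
\[\int_{\FF}\widetilde P(z,w)\,s(w)\,dv_X(w)=\sum_{g\in\Gamma}\int_{g\FF}P_p^\IH(z,w')\,s(w')\,dv_\IH(w')=\int_{\IH}P_p^\IH(z,w')\,s(w')\,dv_\IH(w')=s(z),\]
where the change of variable $w'=g\cdot w$ uses the $\Gamma$-invariance of $s$ and of $dv_\IH$ to absorb the automorphy factor, the identity $\IH=\bigsqcup_{g\in\Gamma}g\FF$ up to measure zero, and the reproducing property of $P_p^\IH$ from \cref{fleBergH}. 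By uniqueness of the reproducing kernel this forces $\widetilde P=P_p^X$, which is \cref{berginv}.

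The third point, and the one I expect to be the main obstacle, is the absolute and locally uniform convergence of the series defining $\widetilde P$. The key input is \cref{fleBergH}, which gives $|P_p^\IH(z,w)|_{K^p\boxtimes\overline{K^p}}=\frac{2^{2p-2}(2p-1)}{\pi}\big(y_zy_w/|z-\overline w|^2\big)^p$; combined with the elementary identity $|z-\overline w|^2/(y_zy_w)=2\cosh d^\IH(z,w)+2$ this yields $|P_p^\IH(z,w)|_{K^p\boxtimes\overline{K^p}}\le C e^{-p\,d^\IH(z,w)}$, and the weight-$2p$ automorphy factor $j(g,w)^{2p}$ is precisely the one ensuring that, in a fixed trivialisation over a compact set, the $g$-th term is controlled by $e^{-p\,d^\IH(z,g\cdot w)}$. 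It then remains to combine this with the lattice-point estimate $\#\{g\in\Gamma:d^\IH(z_0,g\cdot w_0)\le R\}\le C_{z_0,w_0}e^{R}$, valid for any discrete $\Gamma$ since orbit points are uniformly separated and hyperbolic balls have exponentially growing area, to conclude $\sum_{g\in\Gamma}e^{-p\,d^\IH(z_0,g\cdot w_0)}<\infty$ locally uniformly; the borderline small-weight case (notably $p=1$ on a non-cocompact quotient) is the classical situation handled through the cuspidality of $L^2$ holomorphic sections and Hecke's summation trick. Everything else is routine cocycle bookkeeping, and all of this is contained, in equivalent form, in the references cited.
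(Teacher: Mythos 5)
The paper itself gives no proof of \cref{BergH}: it is simply quoted as classical, with a reference to Freitag. Your argument is a correct reconstruction of that classical proof. The Paley--Wiener computation of the weighted Bergman kernel of $\IH$ does give \cref{fleBergH} with exactly the stated constant (the Gamma integral and the factor $(-\sqrt{-1})^{-2p}=(-1)^p$ work out as you say), and the automorphization-plus-unfolding argument, based on the identity $gz-\overline{gw}=(z-\overline{w})/\big(j(g,z)\overline{j(g,w)}\big)$, is the standard route to \cref{berginv}. Two routine points should still be written out: before invoking uniqueness of the reproducing kernel you must check that your series, as a function of $z$ for fixed $w$, actually lies in $H^0_{(2)}(X,K_X^p)$, i.e.\ is square-integrable over the (possibly non-compact) fundamental domain; and you should reconcile the factor your unfolding produces in the trivialization by $dz^p$, namely $\overline{j(g,w)}^{-2p}$, with the factor $j(g,w)^{2p}$ appearing in \cref{berginv}, which refers to the full kernel including the metric dual $d\overline{(g.w)}{}^{p}$ --- they agree because $|j(g,w)|^{-4p}=(y_{g.w}/y_w)^{2p}$ is precisely the ratio of the pointwise norms, but as stated your second and third paragraphs use two different normalizations.

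The genuine gap is the case $p=1$. Your own estimates --- each term comparable to $e^{-p\,d(z,g.w)}$, orbital counting function of order $e^{R}$ --- give absolute, locally uniform convergence only for $p\geq 2$. At $p=1$ the comparison series $\sum_{g}e^{-d(z,g.w)}$ diverges whenever $\Gamma$ is a lattice, cocompact or not (cofinite Fuchsian groups have critical exponent $1$ and are of divergence type), so the difficulty is not, as your parenthesis suggests, confined to non-cocompact quotients, and it is not a weakness of your bound: absolute convergence itself fails there. Hecke's summation trick yields conditional convergence or analytic continuation of weight-$2$ Poincar\'e series, not the absolute convergence asserted in \cref{berginv}, so it cannot close this case as you propose. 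A complete proof of the proposition as stated would have to either restrict to groups of critical exponent $<1$, or treat $p=1$ by a genuinely different (regularized) argument as in the classical weight-$2$ theory; as written, your proof covers $p\geq 2$ only, which is what the paper's asymptotic applications actually use, but you should state this restriction explicitly.
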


%
%

The series \cref{berginv} is an example of \emph{Poincaré series}, and is a standard method to construct functions in $\text{S}_{2p}(\Gamma)$ as in \cref{cuspform}. A fundamental problem of the theory of cusp forms is to decide whether a given series vanishes identically or not.

If $\Gamma_0\subset\Gamma$ is a subgroup of $\Gamma$, let us write $\Gamma_0\backslash\Gamma$ for the set of equivalence classes $[g]:=\{g_0g\in\Gamma\ |\ g_0\in\Gamma_0\}$ for all $g\in\Gamma$. Recall that if $g_0$ is hyperbolic or parabolic, it generates a free group $\Gamma_0\subset\Gamma$, whereas if $g_0$ is elliptic, it generates a cyclic subgroup $\Gamma_0\subset\Gamma$.

Using \cref{BergH} and a classical unfolding technique, we get explicit formulas for the Lagrangian states associated with remarkable curves. This is described in the next result.
\begin{prop}\label{RPSgal}
Let $g_0\in\Gamma$, and let $\gamma:\R\fl\IH$ be a smooth curve on $\IH$ parametrized by arclength, together with a unitary flat section $\zeta \in \gamma^*K_\IH$, such that there is an $l>0$ satisfying $g_0.\gamma(t)=\gamma(t+l)$ and $g_0.\zeta(t)=\zeta(t+l)$ for all $t\in\R$. Write $\Gamma_0\subset\Gamma$ for the subgroup generated by $g_0$.

If $g_0$ is hyperbolic or parabolic, then the Lagrangian state $\{s_{\gamma,p}\}_{p\in\N^*}$ associated with $\gamma$ is given through \cref{S2p} and for any $p\in\N^*$ by
\begin{equation}\label{RPS}
\begin{split}
s_{\gamma,p}(z)=\frac{2p-1}{4\pi}
\sum_{[g]\in\Gamma_0\backslash\Gamma}j(g,z)^{-2p}
\int_{-\infty}^{+\infty}\left(\frac{2\sqrt{-1}}
{g.z-\overbar{\gamma(t)}}\right)^{2p}
\big\langle\zeta(t), d\gamma(t)\big\rangle_{K_X}dt.
\end{split}
\end{equation}
If $g_0$ is elliptic, then letting $n\in\N$ be the order of $\Gamma_0$, the Lagrangian state $\{s_{\gamma,p}\}_{p\in\N^*}$ is given through \cref{S2p} and for any $p\in\N^*$ by
\begin{equation}\label{RPSell}
\begin{split}
s_{\gamma,p}(z)=\frac{2p-1}{4\pi}
\sum_{[g]\in\Gamma_0\backslash\Gamma}j(g,z)^{-2p}
\int_0^n\left(\frac{2\sqrt{-1}}{g.z-\overbar{\gamma(t)}}\right)^{2p}\big\langle \zeta(t),d\gamma(t)\big\rangle_{K_X}dt.
\end{split}
\end{equation}
The convergence of the series in \cref{RPS} and \cref{RPSell} are absolute and uniform in $z$ in any compact set of $\IH$.
\end{prop}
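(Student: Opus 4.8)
The plan is to prove \cref{RPSgal} by the classical \emph{unfolding} technique. One starts from the definition \cref{BScurvestate} of the Lagrangian state, substitutes the Poincaré-series expression \cref{berginv} for the Bergman kernel $P_p^X$ of $X=\IH/\Gamma$, inserts the explicit kernel \cref{fleBergH} of $P_p^\IH$, and finally reorganises the resulting sum over $\Gamma$ using the coset decomposition $\Gamma=\bigsqcup_{[g]\in\Gamma/\Gamma_0}g\Gamma_0$. The key point is that the translation relations $g_0.\gamma(t)=\gamma(t+l)$ and $g_0.\zeta(t)=\zeta(t+l)$ allow one to absorb the sum over $\Gamma_0$ into the integral over one period of $\gamma$: in the hyperbolic and parabolic cases $\Gamma_0\cong\Z$ and the period integral $\int_0^l$ spreads out to $\int_{-\infty}^{+\infty}$, while in the elliptic case $\Gamma_0$ is cyclic of order $n$ and $\int_0^l$ spreads out to an integral over $[0,nl]$, which is what \cref{RPSell} abbreviates.

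Concretely, I would proceed as follows. The hypotheses $g_0.\gamma(t)=\gamma(t+l)$ and $g_0.\zeta(t)=\zeta(t+l)$ say exactly that $\gamma$ closes up to a Bohr--Sommerfeld loop of length $l$ in $X$, so $s_{\gamma,p}(z)=\int_0^l P_p^X(z,\gamma(t))\,\gamma_p.\zeta^p(t)\,dt$; substituting \cref{berginv} and exchanging $\sum_{g\in\Gamma}$ with $\int_0^l$ — legitimate because \cref{berginv} converges absolutely and uniformly on compacta, hence is bounded on the compact set $\gamma([0,l])$, so that Fubini--Tonelli applies — gives $s_{\gamma,p}(z)=\sum_{g\in\Gamma}\int_0^l P_p^\IH(z,g.\gamma(t))\,j(g,\gamma(t))^{2p}\,\gamma_p.\zeta^p(t)\,dt$. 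Writing $g=g'g_0^k$ with $g'$ a representative of $[g]\in\Gamma/\Gamma_0$, one has $g'g_0^k.\gamma(t)=g'.\gamma(t+kl)$ in $\IH$; combining the cocycle identity $j(g'g_0^k,w)=j(g',g_0^k.w)\,j(g_0^k,w)$ with the relation $c(t+kl)=c(t)\,j(g_0^k,\gamma(t))^2$ — obtained by iterating $g_0.\zeta(t)=\zeta(t+l)$ written in the trivialisation $\zeta=c\,dz$ of $K_\IH$ by its canonical section — one checks that the whole summand--integrand at $(g'g_0^k,t)$ coincides with the one at $(g',t+kl)$. The change of variable $s=t+kl$ then converts $\sum_k\int_0^l$ into $\int_{-\infty}^{+\infty}$ (where $k$ runs over $\Z$) in the hyperbolic and parabolic cases, and into $\int_0^{nl}$ (where $k$ runs over $\{0,\dots,n-1\}$, so the intervals $[kl,(k+1)l]$ tile $[0,nl]$) in the elliptic case. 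It then remains to insert the explicit formula \cref{fleBergH}, carry out the contraction of $d\overline{w}^p$ against $\gamma_p.\zeta^p(t)$, and use $\operatorname{Im}(g.w)=\operatorname{Im}(w)/|j(g,w)|^2$ together with $|dz|_{K_\IH}=y$ to see that all surviving powers of $\operatorname{Im}\gamma(t)$ and of $j(g,\gamma(t))$ recombine exactly into the integrand appearing in \cref{RPS}, respectively \cref{RPSell}, and in particular to identify the scalar there denoted $\zeta(t)$. Finally, the absolute and locally uniform convergence of the unfolded series is inherited from that of \cref{berginv}, as it is dominated on compacta by the same majorant.

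The unfolding itself being formal, the genuinely delicate part is the bookkeeping of trivialisations and of the $SL_2(\R)$-covariance factors: one must simultaneously track the trivialisation of $K_X$ (and of $\gamma^*K_X$ by the flat section $\zeta$), the $dz^p$ function model \cref{S2p}, and the automorphy factors $j(g,\cdot)^{\pm2p}$ and the imaginary-part cocycle, so as to pin down the exact constant and the precise meaning of $\zeta(t)$ in \cref{RPS} and \cref{RPSell}. In the elliptic case one should moreover keep in mind that $\Gamma_0=\la g_0\ra$ genuinely has order $n$ acting on $\IH$, and that the well-definedness of the left-hand sides of \cref{RPS} and \cref{RPSell} — namely that $(S^1,\til{\gamma},\zeta)$ is a (possibly singular) Bohr--Sommerfeld submanifold of $X$ to which \cref{BScurvestate} applies — is exactly what \cref{BScurve} and \cref{BSgeod} guarantee.
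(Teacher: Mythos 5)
Your proposal is correct and follows essentially the same route as the paper's proof: substitute the Poincaré-series expression \cref{berginv} into \cref{BScurvestate}, exchange sum and integral by the locally uniform absolute convergence, decompose $\Gamma$ over cosets of $\Gamma_0$, and use the periodicity relations for $\gamma$ and $\zeta$ together with the $j$-cocycle to unfold $\sum_k\int_0^l$ into $\int_{-\infty}^{+\infty}$ (respectively the $n$-fold period in the elliptic case), concluding with \cref{fleBergH}. Your extra bookkeeping of trivialisations and automorphy factors simply makes explicit what the paper's compressed display \cref{sumsPStoLS} leaves implicit.
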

\begin{proof}
Recall that $\text{SL}_2(\R)$ acts on $\IH$ by holomorphic isometries
and that the induced action on $K_X$ preserves $h^{K_X}$.
This implies in particular that the Bergman kernel of
$H^0_{(2)}(X,K_X)$ is invariant by $\text{SL}_2(\R)$.
Using \cref{actcan}, for any
$w\in\IH,\,g\in \text{SL}_2(\R)$ and $\zeta\in K_{\IH,w}$, we have
\begin{equation}
j(g,z)^{-2p}P^{\IH}_p(g.z,w)\zeta=P^{\IH}_p(z,g^{-1}.w)g^{-1}.\zeta,
\end{equation}
through the identification \cref{S2p} in $z\in\IH$.
On the other hand, for any $g,h\in \text{SL}_2(\R)$ and $w\in\IH$,
the cocycle formula $j(gh,w)=j(g,h.w)j(h,w)$ holds by definition.
Consider $g_0\in\Gamma$ hyperbolic or parabolic, and let
$l>0$ be the smallest positive number
satisfying $g_0.\gamma(t)=\gamma(t+l)$ and $g_0.\zeta(t)=\zeta(t+l)$
for all $t\in\R$.
Then from \cref{BScurvestate} and from the uniform convergence
of \cref{berginv}, we get
\begin{equation}\label{sumsPStoLS}
\begin{split}
s_{\gamma,p}(z) & =\int_{\gamma}\sum_{g\in\Gamma}
j(g,z)^{-2p}
P_p^\IH(g.z,\gamma(t))\zeta(t)dt\\
& =\sum_{[g]\in\Gamma_0\backslash\Gamma}~\sum_{n\in\Z}
j(g_0^ng,z)^{-2p}
\int_0^{l}P_p^\IH(g_0^ng.z,\gamma(t))\zeta(t)dt\\
& =\sum_{[g]\in\Gamma_0\backslash\Gamma}j(g,z)^{-2p}
\sum_{n\in\Z}
\int_0^{l}P_p^\IH(g.z,g_0^{-n}.\gamma(t))
g_0^{-n}.\zeta(t)dt\\
& =\sum_{[g]\in\Gamma_0\backslash\Gamma}j(g,z)^{-2p}
\sum_{n\in\Z}\int_{-nl}^{-(n+1)l}P_p^\IH(g.z,\gamma(t))\zeta(t)dt\\
& =\sum_{[g]\in\Gamma_0\backslash\Gamma}j(g,z)^{-2p}
\int_{-\infty}^{+\infty}P_p^\IH(g.z,\gamma(t))\zeta(t)dt,
\end{split}
\end{equation}
and we conclude by \cref{fleBergH}. Note that the sums in \cref{sumsPStoLS} do not depend on the choice of the representatives $g\in\Gamma$ of any $[g]\in\Gamma_0\backslash\Gamma$. The elliptic case \cref{RPSell} is strictly analogous.
\end{proof}

The series \cref{RPS} and \cref{RPSell} are called \emph{relative Poincaré series}. We can now state our main theorem, which is a consequence of \cref{thBScurve}.

\begin{theorem}\label{maincor}
If $\gamma:\R\fl\IH$ satisfying the hypotheses of \cref{RPSgal} descends to a Bohr-Sommerfeld curve, then there is a $p_0\in\N$ such that the associated series \cref{RPS} or \cref{RPSell} do not vanish identically for $p>p_0$. This holds in particular if $\gamma:\R\fl\IH$ is a closed geodesic.
\end{theorem}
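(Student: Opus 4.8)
The statement follows at once by combining the identification of relative Poincaré series with isotropic states (\cref{RPSgal}) and the leading-order norm estimate \cref{BScurvenorme}. The plan is the following.

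Let $\gamma:\R\fl\IH$ satisfy the hypotheses of \cref{RPSgal} and descend to a Bohr-Sommerfeld curve $\bar\gamma:[0,l]\fl X$ with $l>0$. By \cref{RPSgal}, the associated isotropic state $\{s_{\gamma,p}\}_{p\in\N^*}$ of \cref{BScurvestate} is identified, through \cref{S2p}, with the relative Poincaré series \cref{RPS} (resp. \cref{RPSell} when $g_0$ is elliptic) multiplied by the scalar $(-1)^p2^{2p-2}(2p-1)/\pi$. Since this scalar never vanishes, the series \cref{RPS} or \cref{RPSell} vanishes identically if and only if $s_{\gamma,p}=0$ in $H^0_{(2)}(X,K_X^p)$.

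I would then invoke \cref{thBScurve}, whose hypotheses hold in the setting of \cref{appli} (namely $X=\IH/\Gamma$ is complete Kähler, $L=K_X$ satisfies \cref{preq} for $\om=\om_X/2\pi$, and \cref{hypL2} holds since $R^{det}=-R^{K_X}$ is a positive multiple of $\sqrt{-1}\,\om_X$): as $\bar\gamma$ is a Bohr-Sommerfeld curve, \cref{BScurvenorme} gives
\begin{equation}
\norm{s_{\gamma,p}}_{L^2}^2=\left(\frac{p}{\pi}\right)^{1/2}l+O(p^{-1/2})\qquad\text{as }p\fl+\infty .
\end{equation}
As $l>0$, the right-hand side is strictly positive for all $p$ greater than some $p_0\in\N$; for such $p$ one has $s_{\gamma,p}\neq 0$, hence the associated series \cref{RPS} or \cref{RPSell} does not vanish identically. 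This yields the main claim.

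For the last assertion I would check that a closed geodesic fits the framework above: a closed geodesic loop of length $l$ on $X$ is a Bohr-Sommerfeld curve by \cref{BSgeod}, and its lift $\gamma:\R\fl\IH$ is associated with the hyperbolic element $g_0\in\Gamma$ translating by $l$ along the lifted geodesic, the flat section of \cref{BSgeod} transforming equivariantly under $g_0$ because $g_0$ acts by a holomorphic isometry. Thus the hypotheses of \cref{RPSgal} are met and the preceding argument applies. No step here is a genuine obstacle; the only point to keep straight is the bookkeeping of the normalizing constant in \cref{RPS} and the identification \cref{S2p}, both already established above.
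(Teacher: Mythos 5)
Your argument is correct and coincides with the paper's own proof: the non-vanishing of $s_{\gamma,p}$ for large $p$ follows from the norm asymptotics \cref{BScurvenorme}, the identification with the relative Poincaré series is \cref{RPSgal}, and the geodesic case is handled by \cref{BSgeod}. Your additional care with the normalizing constant and the equivariance of the flat section is fine but not a departure from the paper's route.
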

\begin{proof}
By \cref{BScurvenorme}, we know that there is $p_0\in\N$ such that $s_{\gamma,p}$ is non-vanishing for $p\geq p_0$, so that we may conclude by \cref{BSgeod} and \cref{RPSgal}.
\end{proof}

In general, there are simple numerical criterions for horocycles, circles and hypercycles to satisfy the Bohr-Sommerfeld condition, and the integral in the sums \cref{RPS} and \cref{RPSell} can be computed explicitly using \cref{BergH} and elementary complex analysis. In particular, as computed in \cite[Th.~4.11]{BPU95}, if $g_0=
(\begin{smallmatrix} 
a & b \\
c & d 
\end{smallmatrix})$ is a hyperbolic element of $\Gamma$, the series \cref{RPS} for $\gamma$ closed geodesic associated with $g_0$ takes the form
\begin{equation}\label{RPShyp}
s_{\gamma,p}(z)=C_p\sum_{[g]\in\Gamma_0\backslash\Gamma}j(g,z)^{-2p}
\left(c(g.z)^2+(d-a)(g.z)-b\right)^{-p},
\end{equation}
with $C_p\in\C$ explicit non-vanishing constant for all $p\in\N^*$,
and we recover up to normalisation the relative Poincaré series associated with closed hyperbolic geodesics by Katok \cite[\S\ 1]{Kat85}. Furthermore, we get from \cref{thBScurve} a formula for the highest order term as $p\fl+\infty$ of the intersection product of two closed geodesics, recovering a result of \cite[Th.~3]{Kat85}. As showed in \cite[Th.~1]{Kat85}, if $\Gamma$ is a Fuchsian group of the first kind, the series associated with the primitive hyperbolic elements of $\Gamma$ as above generate the whole space $\text{S}_{2p}(\Gamma)$.
%

Finally, note that there are many discrete subgroups $\Gamma\subset \text{SL}_2(\R)$ of interest containing elliptic points and leading to non-compact quotients $X=\IH/\Gamma$, even in the case when
$\Gamma$ is Fuchsian of the first kind. The most famous examples are the classical modular curves.

\bibliographystyle{amsplain}

\providecommand{\bysame}{\leavevmode\hbox to3em{\hrulefill}\thinspace}
\providecommand{\MR}{\relax\ifhmode\unskip\space\fi MR }
\providecommand{\MRhref}[2]{%
  \href{http://www.ams.org/mathscinet-getitem?mr=#1}{#2}
}
\providecommand{\href}[2]{#2}

\ \\

\ \\
Tel Aviv University - School of Mathematical Sciences,\\
Ramat Aviv, Tel Aviv 69978, Israël\\
\\
\emph{E-mail adress}: louisioos@mail.tau.ac.il

\end{document}